\numberwithin{theorem}{section}
\newcommand{\TheTitle}{Event-triggered  Control 
of Infinite-dimensional Systems} 
\newcommand{\TheAuthors}{Masashi Wakaiki and Hideki Sano}
\title{{\TheTitle}\thanks{Submitted to the editors DATE.
\funding{This work was supported by JSPS KAKENHI Grant Numbers JP17K14699.}}}
\author{
  Masashi Wakaiki and Hideki Sano\thanks{Graduate School of System Informatics, Kobe University, Nada, Kobe, Hyogo 657-8501, Japan
    (\email{wakaiki@ruby.kobe-u.ac.jp}, \email{sano@crystal.kobe-u.ac.jp}).}
}
\newtheorem{assumption}[theorem]{Assumption}
\newtheorem{example}[theorem]{Example}
\newtheorem{remark}[theorem]{Remark}
\newcommand{\re}{\mathop{\rm Re}\nolimits}
\begin{document}

\maketitle

\begin{abstract}
This paper addresses the problem of event-triggered control for infinite-dimensional systems.
We employ event-triggering mechanisms that compare the plant state and
the error of the control input induced by the event-triggered implementation.
Under the assumption that feedback operators are compact,
a strictly positive lower bound on the inter-event times can be guaranteed.
We show that 
if the threshold of the event-triggering mechanisms is sufficiently small, then
the event-triggered control system with a bounded control operator and a compact feedback operator 
is exponentially stable.
For infinite-dimensional systems with unbounded control operators,
we employ two event-triggering mechanisms that are based on
system decomposition and periodic event-triggering, respectively, and then
analyze the exponential stability of the closed-loop system under each event-triggering
mechanism.
\end{abstract}

\begin{keywords}
event-triggered control, infinite-dimensional systems, stabilization
\end{keywords}

\begin{AMS}
  		34G10, 93C25, 93C62, 93D15
\end{AMS}

\section{Introduction}
The aim of this paper is to develop resource-aware control schemes for
infinite-dimensional systems.
To this end, we employ event-triggering mechanisms and analyze
exponential stability for infinite-dimensional event-triggered control systems.
Event-triggering mechanisms invoke data transmissions
if predefined conditions on the data are satisfied.
As a result, network and energy resources are consumed only when
the data is necessary for control.
In addition to such networked-control applications, 
the analysis and synthesis of event-triggered control systems are
interesting from a theoretical viewpoint, because the interaction of
continuous-time and discrete-time dynamics in event-triggered control systems is 
different from that in periodic sampled-data systems.
Most of the existing studies on event-triggered control 
have been developed for finite-dimensional systems, but
some researchers have recently extended to
infinite-dimensional systems, e.g., systems with output delays \cite{Lehmann2012},
first-order hyperbolic systems \cite{Espitia2016,Espitia2018, Prieur2017}, and
second-order parabolic systems \cite{Jiang2016, Selivanov2016PDE,Karafyllis2019}.
These earlier studies deal with specific delay differential equations and
partial differential equations. On the other hand,
the infinite-dimensional system we study is described by abstract evolution equations as follows.

For Hilbert spaces $X,U$, we here consider 
the following system with state space $X$
and input space $U$:
\begin{equation}
\label{eq:state_intro}
\dot x(t) = Ax(t) + Bu(t),\quad t \geq 0;\qquad x(0) = x^0 \in X,
\end{equation}
where $x(t) \in X$ is the state, $u(t) \in U$ is the input, $A$ is the generator of a strongly continuous semigroup $T(t)$
on $X$, and $B$ is a bounded linear operator from $U$ into the 
extrapolation space $X_{-1}$ associated with $T(t)$; see the notation and terminology section below
for
the definition of the 
extrapolation space $X_{-1}$.
If the control operator $B$ is bounded from $U$ to $X$,
$B$ is called {\em bounded}.
Otherwise, $B$ is called {\em unbounded}.
To illustrate the extrapolation space $X_{-1}$ and the unboundedness of the control operator $B$, 
consider the heat equation with 
Neumann boundary conditions, where $X = L^2([0,1],\mathbb{C})$ and $Ax := x''$
with domain $ D(A) = \{x \in W^{2,2}(0,1):
x'(0) = x'(1) = 0
\}$.
In this case,
$X_{-1}$ can be regarded as the dual of 
$D(A)$ (with the graph norm of $A$)
with respect to the pivot space $X$.
Therefore, $X_{-1}$ contains Dirac delta functions, which 
implies that for the case where $B$ is unbounded,
we can deal with point actuators.
In contrast, if $B$ is bounded, then
we consider only spatially-distributed actuators.
We refer the reader to Section~II.5 of \cite{Engel2000} and  Section~2.10 of \cite{Tucsnak2014}
for more details on the extrapolation space $X_{-1}$.

To infinite-dimensional systems described by the abstract 
evolution equation \eqref{eq:state_intro},
the results of periodic sampled-data control for 
finite-dimensional systems have been generalized in a number of papers; see
\cite{
	Rebarber1998,
	Rebarber2002, Logemann2003, Logemann2005, Rebarber2006, Logemann2013}.
Let an increasing sequence $\{t_k\}_{k\in \mathbb{N}_0}$ be the updating instants of the control input $u$, and consider
the feedback control
\[
u(t) = Fx(t_k),\quad t_k\leq t <  t_{k+1},~k \in \mathbb{N}_0,
\]
where $F$
is a bounded linear operator from $X$ to $U$.
In the standard sampled-data system, 
control updating is periodic, namely, $t_{k+1} - t_k $ is constant for every $k \in \mathbb{N}_0$. In contrast,
we employ the following event-triggering mechanisms in this paper:
\begin{align}
\label{eq:event_trigger_intro1}
t_0 &:= 0,\quad 
t_{k+1} := 
\inf \big\{t > t_{k}:\|Fx(t) - Fx(t_{k})\|_U > \varepsilon \|x(t_{k})\|_X
\big\} \quad \forall k \in \mathbb{N}_0 \\
\label{eq:event_trigger_intro2}
t_0 &:= 0,\quad 
t_{k+1} := 
\inf \big\{t > t_{k}:\|Fx(t) - Fx(t_{k})\|_U > \varepsilon \|x(t)\|_X
\big\} \quad \forall k \in \mathbb{N}_0,
\end{align}
where $\varepsilon>0$ is a threshold parameter.
If the threshold $\varepsilon$ is small, then 
the control input $u(t)$ is frequently updated. Therefore,
we would expect that the event-triggered control system
is exponentially stable for all sufficiently small thresholds $\varepsilon > 0$ if
$A+BF$ generates an exponentially stable semigroup.
One of the fundamental problems we consider is whether or not
this intuition is correct.

In addition to stability, the minimum inter-event time
$\inf_{k \in \mathbb{N}_0} (t_{k+1} - t_k)$ should be guaranteed to be 
bounded from below by a positive constant; otherwise, infinitely many events might occur in finite time.
This phenomenon is called Zeno behavior (see \cite{Goebel2009}) and
makes event-triggering mechanisms infeasible  for 
practical implementation.
The additional challenge of event-triggered control 
is to
guarantee no occurrence of Zeno behavior.
For finite-dimensional systems, the minimum inter-event time
has been extensively investigated; see \cite{Tabuada2007, Borgers2014,
	Dolk2017_Zeno}.
For example, in the finite-dimensional case, it has been shown in 
\cite{Tabuada2007}
that
the event-triggering mechanism
\[
t_0 := 0,\quad  t_k:=
\inf \big\{t > t_{k}:\|x(t) - x(t_{k})\|_X> \varepsilon \|x(t)\|_X\}\quad \forall k \in \mathbb{N}_0
\]
satisfies $\inf_{k \in \mathbb{N}_0} (t_{k+1} - t_k) \geq \theta$ for some $\theta >0$.
However, this is not true for the infinite-dimensional case, which is illustrated in
Examples~\ref{ex:zeno} and \ref{ex:minimum_inter_event_time_PPDE}. This is the reason why
we employ the event-triggering mechanisms \eqref{eq:event_trigger_intro1} and \eqref{eq:event_trigger_intro2}, 
which compare the plant state and the error $Fx(t) - Fx(t_k)$ of the control input
induced by the event-triggering implementation. 
We see in Section~3 that the time sequence $\{t_k\}_{k\in \mathbb{N}_0}$ defined by
\eqref{eq:event_trigger_intro1}  satisfies 
$\inf_{k \in \mathbb{N}_0} (t_{k+1} - t_k)  \geq \theta$ for some $\theta >0$ if the feedback operator $F$ is compact.
The same result holds for the event-triggering mechanism \eqref{eq:event_trigger_intro2} 
if a strictly positive lower bound on the decay of  $T(t)$ is guaranteed.

In the analysis of minimum inter-event times, we exploit the assumption on the compactness of
feedback operators, which may restrict applicability. In fact, as shown in Theorem~5.2.3 on p.~229 in \cite{Curtain1995},
if the generator $A$ has residual or continuous spectra in the closed right half plane, then
any compact feedback operator cannot guarantee the exponential stability of the semigroup generated by
$A+BF$.
However, if the unstable part of the system $(A,B)$ is finite-dimensional and controllable, 
then we can design a compact feedback operator that 
guarantees the exponential stability of the semigroup generated by
$A+BF$; see, e.g., Theorem~5.2.6 on p.~232 in \cite{Curtain1995}.
One particular example of infinite-dimensional systems with compact feedback operators 
is 
partial differential equations (PDEs) in cascade with ordinary differential equations (ODEs).
Stabilization of cascaded ODE-PDE systems have been recently studied,
e.g., in \cite{Susto2010,Andrade2018} and references therein. 
In the case where ODEs are located at the actuator side, 
the control input is applied to the finite-dimensional system
whose dynamics is described by the ODEs, and
the input space $U$ is finite-dimensional. Therefore,
the feedback operator has finite rank and hence is compact.


After guaranteeing that the minimum inter-event time is positive in Section 3, 
we analyze the exponential stability
of the event-triggered control system in Section~4 for
the case where the control operator $B$ is bounded.
First, we consider the event-triggering mechanism 
\eqref{eq:event_trigger_intro1} with the time constraint 
$t_{k+1} - t_k \leq \tau_{\max}$, $k \in \mathbb{N}_0$, where 
$\tau_{\max} >0$ can be chosen arbitrarily.
Introducing a norm on the state space with respect to which
the semigroup generated by $A+BF$ is a contraction, we
provide a sufficient condition on the threshold $\varepsilon$ of the event-triggering mechanism 
for the exponential stability of the closed-loop system.
Next we obtain a similar sufficient condition for the event-triggering mechanism \eqref{eq:event_trigger_intro2}.
While we obtain the former result via a trajectory-based approach, 
a key element in the latter result is the application of the Lyapunov stability theorem.

In Section 5, we study the case where $B$ is unbounded.
We first focus on a system with a finite-dimensional unstable part
and a feedback operator that stabilizes the unstable part but does not act on
the residual stable part.
In this case, the feedback operator has a specific structure, but
we can achieve the exponential stability of the closed-loop system
by using less conservative event-triggering mechanisms for the finite-dimensional part.
Second, 
we consider the case in which the semigroups $T(t)$ is analytic and
the feedback operator $F$ has no specific structure but is compact. 
Moreover, 
the semigroup generated by $A_{BF}$, where $A_{BF}x = (A+BF)x$ with domain $D(A_{BF}) := \{x \in X: (A+BF)x \in X\}$,
is assumed to be exponentially stable. Then we show that 
the closed-loop system is  exponentially stable under 
periodic event-triggering mechanisms \cite{Heemels2013, Heemels2013_Automatica}
with sufficiently small sampling periods and threshold parameters.

In Section~6, we illustrate the obtained results with
numerical examples.
First,  we study 
a cascaded system consisting of an ODE and a heat PDE as an example of
an infinite-dimensional system with a bounded control operator.
Second, we consider
an Euler-Bernoulli beam 
for the case where $B$ is unbounded.
From numerical simulations, we see that
the event-triggered control systems achieve
faster convergence with less control updates than
the conventional periodic sampled-data control systems.


\subsection*{Notation and terminology}
We denote by $\mathbb{Z}$ and $\mathbb{N}$ the set of  integers and
the set of positive integers, respectively.
Define 
$\mathbb{N}_0 := \mathbb{N} \cup \{0\}$, $\mathbb{R}_+ := [0,\infty)$, $
\mathbb{C}_- := \{s \in \mathbb{C} : \re s < 0\}
$, and
$\mathbb{Z}^* := \mathbb{Z} \setminus \{0\}$.
For $\alpha \in \mathbb{R}$, we define $\mathbb{C}_{\alpha} :=
\{s \in \mathbb{C} : \re s > \alpha \}$.
Let $X$ and $Y$ be Banach spaces. We denote
the space of all bounded linear operators from $X$ to $Y$ 
by $\mathcal{B}(X,Y)$, and set $\mathcal{B}(X) := \mathcal{B}(X,X)$.
We write $T^*$ for the adjoint operator of $T\in \mathcal{B}(X,Y)$.
Let $A$ be a linear operator from $X$ to $Y$. 
The domain of $A$ is denoted by $D(A)$.
For a subset $S \subset X$, let $A|_S$ denote
the restriction of $A$ to $S$, namely,
\[
A|_S x = Ax \qquad \forall x \in D(A) \cap S.
\]
The resolvent set and 
spectrum of a linear operator $A:D(A) \subset X \to X$
are denoted by $\varrho(A)$ and  $\sigma(A)$, respectively.
Let  $T(t)$ be
a strongly continuous semigroup on $X$.
The exponential growth bound of $T(t)$ is denoted by
$\omega(T)$, that is, $\omega(T) := \lim_{t \to \infty} \ln \|T(t)\|/t$.
We say that the strongly continuous semigroup $T(t)$ is {\em exponentially stable}
if $\omega(T) < 0$. 
The space $X_{-1}$ denotes the extrapolation space
associated with $T(t)$. 
More precisely,
if $A$ is the generator of $T(t)$, then
the space $X_{-1}$ is the completion of $X$ with
respect to the norm $\|\zeta \|_{-1} := \|(\lambda I - A)^{-1}\zeta \|$
for $\lambda \in \varrho(A)$.
Different choices of $\lambda\in \varrho(A)$ lead to equivalent norms on $X_{-1}$.
The semigroup $T(t)$ can be extended to a strongly continuous semigroup 
on $X_{-1}$, and its generator on $X_{-1}$ is an extension of $A$.
We shall use the same symbols $T(t)$ and $A$
for the original ones and the associated extensions.

\section{Infinite-dimensional system}
Let
an increasing sequence $\{t_k\}_{k\in \mathbb{N}_0}$ satisfy $t_0 = 0$ and
$
\inf_{k \in \mathbb{N}_0} (t_{k+1} - t_k)  > 0.
$
We denote by $X$ and $U$ the state space and the input space, 
and both of them are Hilbert spaces.
Let us denote by $\|\cdot\|$ and $\langle \cdot, \cdot \rangle$ 
the norm and the inner product of $X$, respectively.
As in  the periodic sampled-data case \cite{Logemann2003}, where $t_{k+1} - t_k$
is constant for every $k \in \mathbb{N}_0$,
consider the following infinite-dimensional system:
\begin{subequations}
	\label{eq:plant}
	\begin{align}
	\label{eq:state_equation}
	\dot x (t) &= Ax(t) +Bu(t),\quad  t \geq 0; \qquad 
	x(0) = x^0 \in X \\
	\label{eq:input}
	u(t) &= F x(t_k),\quad t_k\leq t <  t_{k+1},~k \in \mathbb{N}_0,
	\end{align}
\end{subequations}
where $x(t) \in X$ is the state and
$u(t) \in U$ is the input for $t\geq 0$.
We assume that 
$A$ is the generator of a strongly continuous semigroup $T(t)$ on $X$ and that the 
control operator $B$ and the feedback operator $F$ satisfy
$B \in \mathcal{B}(U,X_{-1})$ and $F \in  \mathcal{B}(X,U)$, respectively,
where $X_{-1}$ is the extrapolation space associated with $T(t)$.
We say that $B$ is {\em bounded} if $B \in \mathcal{B}(U,X)$; otherwise
$B$ is {\em unbounded}.
For example, if we control the temperature of a rod, then the control operator $B$ is bounded for
spatially-distributed actuators but is unbounded for point actuators; see also Chapters~3 and 4 of \cite{Curtain1995}.

The unique solution of the abstract evolution equation \eqref{eq:plant}
is given by
\begin{subequations}
	\label{eq:unique_solution}
	\begin{align}
	x(0) &= x^0 \\
	x(t_k+\tau) &= T(\tau) x(t_k) + \int^{\tau}_0 T(s)BFx(t_k) ds\quad 
	\forall \tau \in (0, t_{k+1}-t_k],~\forall k \in \mathbb{N}_0.
	\end{align}
\end{subequations}
In fact,
considering $T(t)$ as a semigroup on $X_{-1}$,
we find from the standard theory of strongly continuous semigroups
that $x$ given by \eqref{eq:unique_solution} satisfies
\begin{equation}
\label{eq:continuity}
x \in C(\mathbb{R}_{+}, X)\quad \text{and} \quad 
x|_{[t_k,t_{k+1}]} \in C^1([t_k,t_{k+1}], X_{-1})\quad \forall k \in \mathbb{N}_0
\end{equation}
and
the following differential equation in $X_{-1}$:
\begin{equation}
\label{eq:event_dynamics}
\dot x(t) = Ax(t) + BFx(t_k)\qquad \forall t \in (t_k,t_{k+1}),~\forall k \in \mathbb{N}_0.
\end{equation}
Moreover, only $x$ defined by \eqref{eq:unique_solution} satisfies
the properties \eqref{eq:continuity} and \eqref{eq:event_dynamics}.

\begin{definition}[Exponential stability]
	The system \eqref{eq:plant} is exponential stable  if 
	there exist $M \geq 1$ and $\gamma > 0$ such that 
	$x$ given by \eqref{eq:unique_solution} satisfies
	\[
	\|x(t)\| \leq M e^{-\gamma t} \|x^0\| \qquad 
	\forall x^0 \in X,~\forall t \geq 0. 
	\]
	The supremum over all possible values of $\gamma$ is called the 
	stability margin of the system \eqref{eq:plant}.
\end{definition}

\section{Minimum inter-event time}
We call  $\inf_{k \in \mathbb{N}_0} (t_{k+1} - t_k)$
the {\em minimum inter-event time}. If this value 
is zero, then  the control input may be updated infinitely fast, 
which
is not desirable for practical implementation.
The objective of this section is to show that 
the minimum inter-event time of
the following event-triggering mechanisms is 
bounded from below by a strictly positive constant:
\begin{align}
\label{eq:time_seq}
t_0 := 0,\quad 
&t_{k+1} := 
\inf \big\{t > t_{k}:\|Fx(t) - Fx(t_{k})\|_U > \varepsilon \|x(t_{k})\|
\big\} \quad \forall k \in \mathbb{N}_0 \\
\label{eq:time_seq_present}
t_0 := 0,\quad &t_{k+1} := 
\inf \big\{t > t_{k}:\|Fx(t) - Fx(t_{k})\|_U > \varepsilon \|x(t)\|
\big\} \quad \forall k \in \mathbb{N}_0.
\end{align}

In this and the next section, we employ the event-triggering mechanisms
\eqref{eq:time_seq} and 
\eqref{eq:time_seq_present}, which
compare the plant state and the error $Fx(t) - Fx(t_k)$ of the control input
induced by the event-triggering implementation. On the other hand, for finite-dimensional systems,
the  event-triggering mechanism
\begin{equation}
\label{eq:usual_EV1}
t_0 := 0,\quad 
t_{k+1} := 
\inf \big\{t > t_{k}:\|x(t) - x(t_{k})\| > \varepsilon \|x(t)\|\}
\quad \forall k \in \mathbb{N}_0
\end{equation}
is commonly used;
see, e.g., \cite{Tabuada2007}, in which it is proved that 
the minimum inter-event time of the 
event-triggering mechanism \eqref{eq:usual_EV1} is positive for finite-dimensional systems. 
However, in the infinite-dimensional case,
there exists a triple  of an infinite-dimensional systems, 
an initial state, and a feedback operator
such that
the inter-event time $t_{k+1} - t_k$ decreases to $0$ in finite time.
\begin{example}
	\label{ex:zeno}
	{\em 
		Let $X = L^2(0,\infty) := L^2([0,\infty), \mathbb{C})$ and consider the shift operator on
		$L^2(0,\infty)$:
		\[
		(T(t)x)(s) := x(t+s) \qquad \forall x \in L^2(0,\infty),~\forall s \geq 0.
		\]
		Then $T(t)$ is a strongly continuous semigroup on $L^2(0,\infty)$.
		As discussed in Remark~7 in \cite{Curtain1999}, 
		$T(t)$ is strongly stable but its adjoint $T(t)^*$ is not.
		
		Define an initial state $x^0 \in L^2(0,\infty)$ by
		\[
		x^0(s) :=
		\begin{cases}
		1 & s \leq 1 \\
		0  & s > 1,
		\end{cases}
		\]
		and $x(t) = T(t)x^0$ for all $t \geq 0$, which means that 
		the control operator $B$ is arbitrary but the feedback operator 
		$F$ satisfies $F=0$.

		For $\varepsilon \in(0,1)$,
		define a time sequence $\{t_k\}_{k\in \mathbb{N}_0}$ by
		\begin{equation}
		\label{eq:usual_EV2}
		t_0 := 0,\quad 
		t_{k+1} := 
		\inf \big\{t > t_{k}:\|x(t) - x(t_{k})\|_{L^2} > \varepsilon \|x(t_k)\|_{L^2}\}
		\quad \forall k \in \mathbb{N}_0.
		\end{equation}
		If $t_k \in [0,1)$, then $\|x(t_k)\|_{L^2}^2 = 1-t_k$ and
		\begin{align*}
		\|T(\tau)x(t_k) -  x(t_k)\|_{L^2}^2 =
		\int^{1 - t_k}_{1-(t_k+\tau)} 1 ds
		= \tau \qquad \forall \tau \in [0,1-t_k].
		\end{align*}
		It follows that
		\[
		t_{k+1} = t_k + \varepsilon^2(1-t_k) \qquad \forall k \in \mathbb{N}_0.
		\]
		Similarly, if we define $\{t_k\}_{k\in \mathbb{N}_0}$ by 
		the event-triggering mechanism  \eqref{eq:usual_EV1},
		then 
		\[
		t_{k+1} = t_k + \frac{\varepsilon^2}{1+\varepsilon^2} (1-t_k) \qquad \forall k \in \mathbb{N}_0.
		\]
		Both of the time sequences $\{t_k\}_{k\in \mathbb{N}_0}$ are monotonically increasing and 
		converge to 1. Thus 
		the minimum inter-event time $\inf_{k \in \mathbb{N}_0} (t_{k+1} - t_k)$ is zero.
		\hspace*{\fill} $\Box$
	}
\end{example}

We next provide an example of
infinite-dimensional event-triggered control systems in which $\inf_{x^0 \in X} t_1 = 0$.
\begin{example}
	\label{ex:minimum_inter_event_time_PPDE}
	{\em
		Consider a metal rod of length $1$ that 
		is insulated at either end and
		can be heated along its 
		length:
		\begin{subequations}
			\label{eq:PPDE}
			\begin{align}
			&\frac{\partial z}{\partial t}(\xi,t) = 
			\frac{\partial^2 z}{\partial \xi^2}(\xi,t) + v(\xi,t),\quad \xi \in [0,1],~t \geq 0 \\
			& \frac{\partial z}{\partial \xi}(0,t) = 0,\quad \frac{\partial z}{\partial \xi}(1,t) = 0,\quad t \geq 0,
			\end{align}
		\end{subequations}
		where $z(\xi,t)$ and $v(\xi,t)$ are the temperature of the rod and
		the addition of heat along the rod
		at position $\xi \in [0,1]$ and time $t\geq 0$, respectively.
		We can reformulate the partial differential equation \eqref{eq:PPDE} as 
		an abstract evolution equation \eqref{eq:state_equation}
		with $X := L^2(0,1):= L^2([0,1], \mathbb{C})$, $U := L^2(0,1)$, 
		$x(t) := z(\cdot,t)$, and $u(t) := v(\cdot,t)$. 
		As shown in 
		Example~2.3.7 on p.~45 in \cite{Curtain1995},
		the generator $A$, the strongly continuous semigroup $T(t)$ generated by $A$, and
		the control operator $B$
		are given by
		\begin{align*}
		A x := -\sum_{n = 0}^\infty 
		n^2\pi^2
		\langle
		x,\phi_n
		\rangle_{L^2} \phi_{n} 
		\end{align*}
		with domain
		\[
		D(A) := \left\{
		x \in L^2(0,1):
		\sum_{n = 0}^\infty  n^4\pi^4 
		|\langle
		x,\phi_n
		\rangle_{L^2}|^2 < \infty
		\right\}
		\]
		and
		\[
		T(t) x := 
		\sum_{n = 0}^\infty 
		e^{-n^2\pi^2 t}
		\langle
		x,\phi_n
		\rangle_{L^2} \phi_{n}\quad \forall x \in X,~\forall t \geq 0;\qquad B:=I,
		\]
		where $\phi_0(\xi) := 1$ and $\phi_n (\xi) := \sqrt{2} \cos(n \pi\xi )$, $n \in \mathbb{N}$, form
		an orthonormal basis for $L^2(0,1)$.
		Define the feedback operator $F \in \mathcal{B}(L^2(0,1))$ by
		$
		Fx := -
		\langle
		x,\phi_0
		\rangle_{L^2} \phi_0.
		$
		Although $T(t)$ is not exponentially stable, 
		the strongly continuous semigroup $T_{BF}(t)$ generated by $A+BF$ is exponentially stable.
		For this system, we consider the event-triggering 
		mechanism \eqref{eq:usual_EV1} and show $
		\inf_{x^0 \in X} t_1 = 0$ for every threshold $\varepsilon > 0$.
		
		Let the initial state $x^0$ be given by $x^0 := \phi_n$ with $n \in \mathbb{N}$. Then
		$Fx^0= 0$ and hence
		$
		x(t) = e^{-n^2 \pi^2 t} \phi_n
		$ for every $t \in [0,t_1)$.
		Since
		\[
		\|x(t) - x^0\| = 1-
		e^{-n^2 \pi^2 t} ,\quad
		\|x(t) \| = 
		e^{-n^2 \pi^2 t}\qquad \forall t \in [0,t_1)
		\]
		it follows that $t_1  = \log(1+\varepsilon)/ (n^2\pi^2)\to 0$ as $n \to \infty$.
		Thus, we obtain $\inf_{x^0 \in X} t_1 = 0$ for every $\varepsilon > 0$.
		\hspace*{\fill} $\Box$
	}
\end{example}

In Example~\ref{ex:zeno}, we consider a situation where
the state goes to zero in finite time with zero control input.
In Example~\ref{ex:minimum_inter_event_time_PPDE}, we only show that 
the first inter-event time is close to zero if we choose a certain initial state.
Therefore,
we cannot say that  the 
event-triggering mechanism \eqref{eq:usual_EV1} fails for practical control systems.
However,  these examples imply that 
there exists a triple of an infinite-dimensional system, an initial state, and
a compact feedback operator for which 
the minimum inter-event time can be arbitrarily close to zero. This is the reason
why we use the event-triggering mechanisms
\eqref{eq:time_seq} and 
\eqref{eq:time_seq_present}.

For $\tau \geq 0$,
define the operator $S_{\tau}: U \to X$ by
\[
S_{\tau}u := \int^{\tau}_0 T(s)Bu ds.
\]
The following lemma is useful when we evaluate
the minimum inter-event time:
\begin{lemma}[\hspace{0.01pt}Lemma 2.2 in \cite{Logemann2003}]
	\label{lem:Stau}
	For any $\tau\geq 0$, $S_{\tau} \in \mathcal{B}(U,X)$, and 
	for any $\theta >0$,
	\[
	\sup_{0\leq \tau\leq \theta} \|S_{\tau}\|_{\mathcal{B}(U,X)} < \infty.
	\]
	Moreover, if $F \in \mathcal{B}(X,U)$ is compact, then
	\begin{equation}
	\label{eq:SF_limit}
	\lim_{\tau \to 0} \|S_{\tau}F\|_{\mathcal{B}(X)} = 0.
	\end{equation}
\end{lemma}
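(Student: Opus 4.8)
First, for the boundedness $S_\tau \in \mathcal{B}(U,X)$ and the uniform bound over $[0,\theta]$: the key point is that $B \in \mathcal{B}(U,X_{-1})$, so $T(s)Bu$ lives a priori in $X_{-1}$, and one must show the integral actually lands in $X$. The standard fact (see Section~4.2 of \cite{Tucsnak2014}, or the Lebesgue-point/density argument) is that for a strongly continuous semigroup $T$ on $X_{-1}$ with generator extending $A$, the map $\tau \mapsto \int_0^\tau T(s)Bu\,ds$ is continuous from $\mathbb{R}_+$ into $X$ — indeed $S_\tau u = A_{-1}^{-1}(T(\tau)-I)Bu$ when $0 \in \varrho(A)$, and more generally one uses $(\lambda I - A_{-1})^{-1}$; since $(\lambda I - A_{-1})^{-1}Bu \in X$ and $T(s)$ restricts to a $C_0$-semigroup on $X$, the resolvent commutes with $T(s)$ and pulls the integral back into $X$. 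Continuity of $\tau \mapsto S_\tau$ in $\mathcal{B}(U,X)$ then follows from uniform boundedness of $T(s)$ on compact $s$-intervals together with the closed graph / uniform boundedness principle, and continuity on the compact set $[0,\theta]$ yields the finite supremum. I would simply cite \cite{Logemann2003} for these two assertions since they are exactly Lemma~2.2 there, and only sketch the resolvent identity.

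Second, and this is where the compactness of $F$ enters, I would prove \eqref{eq:SF_limit}. Write $S_\tau F = (S_\tau)(F)$ where $F$ is compact and $S_\tau \in \mathcal{B}(U,X)$ is uniformly bounded for $\tau \in [0,1]$ by the first part. The elementary pointwise fact is that for each fixed $x \in X$, $S_\tau F x = \int_0^\tau T(s)BFx\,ds \to 0$ in $X$ as $\tau \to 0$, by continuity of $\tau \mapsto S_\tau(Fx)$ established above and $S_0 = 0$. So $S_\tau F \to 0$ strongly. To upgrade strong convergence to norm convergence one uses compactness of $F$: the image $F(\{x : \|x\| \le 1\})$ is relatively compact in $U$, and a net (here, sequence $\tau_n \to 0$) of uniformly bounded operators converging pointwise converges uniformly on relatively compact sets. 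Concretely, given $\delta > 0$, cover the compact set $\overline{F(B_U)}$ by finitely many balls of radius $\delta$ centered at $u_1,\dots,u_N$; choose $\tau$ small so that $\|S_\tau u_j\| < \delta$ for all $j$; then for any $x$ with $\|x\| \le 1$, pick $u_j$ with $\|Fx - u_j\| < \delta$ and estimate $\|S_\tau Fx\| \le \|S_\tau\|_{\mathcal{B}(U,X)}\,\|Fx - u_j\| + \|S_\tau u_j\| \le (\sup_{0 \le \sigma \le 1}\|S_\sigma\|)\,\delta + \delta$, which is $O(\delta)$ uniformly in $x$. Taking the supremum over $\|x\|\le 1$ gives $\|S_\tau F\|_{\mathcal{B}(X)} \to 0$.

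\textbf{The main obstacle} is really just the first part — making rigorous that $S_\tau$ maps into $X$ (not merely $X_{-1}$) and that $\tau \mapsto S_\tau$ is continuous in the $\mathcal{B}(U,X)$-norm topology, which requires the admissibility-type theory of the extrapolation space. Since this is precisely the content of the cited Lemma~2.2 in \cite{Logemann2003}, I would not reprove it but invoke it directly; the genuinely new observation needed here is only the compactness upgrade in the second part, which is the finite-$\delta$-net argument above and is short. No nontrivial calculation is involved once the framework is in place.
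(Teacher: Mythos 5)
Your proposal is correct. The paper itself offers no proof of this lemma --- it is imported verbatim as Lemma~2.2 of \cite{Logemann2003} --- so there is no in-paper argument to compare against; your reconstruction matches the standard one. The mechanism you identify for the first part is the right one: since the generator of the extended semigroup on $X_{-1}$ is $A_{-1}$ with domain $X$, the elementary semigroup identity $\int_0^\tau T(s)\zeta\,ds \in D(A_{-1})$ with $A_{-1}\int_0^\tau T(s)\zeta\,ds = T(\tau)\zeta-\zeta$ applied to $\zeta = Bu \in X_{-1}$ gives both $S_\tau u \in X$ and, via the equivalence of the $X$-norm with the graph norm of $A_{-1}$, the uniform bound on $[0,\theta]$ as well as $S_\tau u \to 0$ in $X$ as $\tau\to 0$ for fixed $u$. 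The finite-net upgrade from strong convergence to convergence in $\mathcal{B}(X)$-norm using relative compactness of the image of the unit ball under $F$ is exactly the standard argument (it is the mirror image of the paper's Lemma~\ref{lem:uniform_conv}, which treats a compact factor on the left rather than the right, and so cannot be invoked directly here --- your choice to argue on the range of $F$ instead is the correct adaptation). The only blemish is notational: the relatively compact set should be written $\overline{F(B_X)}$ for $B_X$ the closed unit ball of $X$, not $F(B_U)$; this does not affect the argument.
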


For compact operators, the next lemma is also known:
\begin{lemma}[\hspace{0.01pt}Lemma 2.1 in \cite{Logemann2003}]
	\label{lem:uniform_conv}
	Let $X$, $Y$, and $Z$ be Hilbert spaces and let $\Gamma :[0,1] \to \mathcal{B}(X,Y)$
	be given. If $\lim_{t \to 0} \Gamma(t)^{*} y = 0$ for all $y \in Y$ and 
	if $\Lambda \in \mathcal{B}(Y,Z)$ is compact, then
	\[
	\lim_{t \to 0}  \|\Lambda \Gamma(t)\|_{\mathcal{B}(X,Z)} = 0.
	\]
\end{lemma}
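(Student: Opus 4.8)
The plan is to pass to adjoints, so that the compactness of $\Lambda$ turns into a compact subset of $Y$ on which the pointwise convergence $\Gamma(t)^{*}y \to 0$ can be upgraded to uniform convergence. Since $X$ and $Z$ are Hilbert spaces, for each $t \in [0,1]$ we have $\|\Lambda \Gamma(t)\|_{\mathcal{B}(X,Z)} = \|(\Lambda \Gamma(t))^{*}\|_{\mathcal{B}(Z,X)} = \|\Gamma(t)^{*}\Lambda^{*}\|_{\mathcal{B}(Z,X)}$, so it suffices to prove $\|\Gamma(t)^{*}\Lambda^{*}\|_{\mathcal{B}(Z,X)} \to 0$ as $t \to 0$. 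Because $\Lambda$ is compact, so is $\Lambda^{*} \in \mathcal{B}(Z,Y)$ (Schauder's theorem); hence, writing $B_{Z}$ for the closed unit ball of $Z$, the set $K := \overline{\Lambda^{*}B_{Z}}$ is a compact subset of $Y$. Since $\|\Gamma(t)^{*}\Lambda^{*}\|_{\mathcal{B}(Z,X)} = \sup_{z \in B_{Z}} \|\Gamma(t)^{*}\Lambda^{*}z\|_{X} \le \sup_{y \in K}\|\Gamma(t)^{*}y\|_{X}$, it is enough to show that $\Gamma(t)^{*}$ tends to $0$ uniformly on $K$ as $t \to 0$.

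First I would record a uniform bound on $\Gamma(t)^{*}$ for small $t$. As no regularity of $t \mapsto \Gamma(t)$ is assumed, I argue along sequences: let $(t_{n})_{n \in \mathbb{N}} \subset (0,1]$ be any sequence with $t_{n} \to 0$. By hypothesis $\Gamma(t_{n})^{*}y \to 0$ in $X$ for every $y \in Y$, so the sequence $(\Gamma(t_{n})^{*})_{n}$ is pointwise bounded; since $Y$ is a Banach space, the uniform boundedness principle gives $C := \sup_{n \in \mathbb{N}}\|\Gamma(t_{n})^{*}\|_{\mathcal{B}(Y,X)} < \infty$.

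Then comes the standard finite-net argument on the compact set $K$. Fix $\eta > 0$ and, by compactness, choose $y_{1}, \dots, y_{m} \in K$ such that every $y \in K$ satisfies $\|y - y_{i}\|_{Y} < \eta/(3C)$ for some $i$. Since $\Gamma(t_{n})^{*}y_{i} \to 0$ for each $i$, there is $N \in \mathbb{N}$ with $\|\Gamma(t_{n})^{*}y_{i}\|_{X} < \eta/3$ for all $n \ge N$ and all $i$. Then for $n \ge N$ and arbitrary $y \in K$, picking a suitable $i$,
\[
\|\Gamma(t_{n})^{*}y\|_{X} \le \|\Gamma(t_{n})^{*}\|_{\mathcal{B}(Y,X)}\,\|y - y_{i}\|_{Y} + \|\Gamma(t_{n})^{*}y_{i}\|_{X} < C \cdot \frac{\eta}{3C} + \frac{\eta}{3} < \eta ,
\]
so $\sup_{y \in K}\|\Gamma(t_{n})^{*}y\|_{X} \to 0$. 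Combining this with the estimate of the first paragraph yields $\|\Lambda\Gamma(t_{n})\|_{\mathcal{B}(X,Z)} \to 0$, and since the sequence $(t_{n})$ was arbitrary, $\lim_{t \to 0}\|\Lambda\Gamma(t)\|_{\mathcal{B}(X,Z)} = 0$.

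I do not anticipate a serious obstacle: the only point requiring care is that, without any regularity of $t \mapsto \Gamma(t)$, the uniform bound must be obtained along a sequence $t_{n} \to 0$ rather than over all of $(0,1]$; the rest is the routine combination of the adjoint norm identity, compactness of $\Lambda^{*}$, Banach--Steinhaus, and an $\eta$-net argument on $K$.
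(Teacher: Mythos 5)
Your proof is correct. Note that the paper itself gives no proof of this lemma---it is quoted verbatim as Lemma~2.1 of \cite{Logemann2003}---and your argument (pass to adjoints so that $\|\Lambda\Gamma(t)\|_{\mathcal{B}(X,Z)}=\|\Gamma(t)^{*}\Lambda^{*}\|_{\mathcal{B}(Z,X)}$, use Schauder to make $K=\overline{\Lambda^{*}B_{Z}}$ compact, get a uniform bound on $\Gamma(t_n)^{*}$ along an arbitrary sequence $t_n\to 0$ via Banach--Steinhaus, and finish with an $\eta$-net to upgrade pointwise to uniform convergence on $K$) is precisely the standard route to this result, with the sequential handling of the uniform bound being the one point that genuinely needs the care you gave it.
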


Using these lemmas, we obtain the following result:
\begin{lemma}
	\label{thm:no_zeno}
	Assume that $T(t)$ is a strongly continuous semigroup on $X$,
	$B \in \mathcal{B}(U,X_{-1})$, and $F \in \mathcal{B}(X,U)$ is compact.
	Set
	\begin{equation}
	\label{eq:state_transition_for_zeno}
	x(\tau) = (T(\tau) +  S_{\tau}F)x^0\quad 
	\forall \tau > 0;\qquad x^0 \in X.
	\end{equation}
	For every $\varepsilon > 0$, 
	there exists $\theta >0$ such that
	\[
	\| Fx(\tau) - Fx^0\|_U\leq \varepsilon \|x^0\|\qquad 
	\forall x^0 \in X,~\forall \tau \in [0,\theta). \]
\end{lemma}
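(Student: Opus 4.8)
The plan is to split $Fx(\tau) - Fx^0$ into a semigroup part and an input part and to show that each vanishes in operator norm as $\tau \downarrow 0$. From \eqref{eq:state_transition_for_zeno},
\[
Fx(\tau) - Fx^0 = F\big(T(\tau) - I\big)x^0 + FS_{\tau}Fx^0,
\]
and since $S_{\tau} \in \mathcal{B}(U,X)$ by Lemma~\ref{lem:Stau}, the composition $FS_{\tau}F$ lies in $\mathcal{B}(X,U)$, so
\[
\|Fx(\tau) - Fx^0\|_U \leq \Big( \|F(T(\tau) - I)\|_{\mathcal{B}(X,U)} + \|F\|_{\mathcal{B}(X,U)}\,\|S_{\tau}F\|_{\mathcal{B}(X)} \Big)\|x^0\| \qquad \forall x^0 \in X.
\]
Thus it suffices to show that the prefactor tends to $0$ as $\tau \to 0$: given $\varepsilon > 0$ we then choose $\theta > 0$ so that the prefactor is $\leq \varepsilon$ on $(0,\theta)$, while the case $\tau = 0$ is trivial because the left-hand side vanishes there.

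The input part is handled directly by Lemma~\ref{lem:Stau}: compactness of $F$ gives $\|S_{\tau}F\|_{\mathcal{B}(X)} \to 0$, hence $\|F\|_{\mathcal{B}(X,U)}\,\|S_{\tau}F\|_{\mathcal{B}(X)} \to 0$ as $\tau \to 0$.

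For the semigroup part I would invoke Lemma~\ref{lem:uniform_conv} with $Y = X$, $Z = U$, $\Gamma(\tau) := T(\tau) - I \in \mathcal{B}(X)$ for $\tau \in [0,1]$, and $\Lambda := F$, which is compact. The hypothesis to verify is that $\Gamma(\tau)^* y = (T(\tau)^* - I)y \to 0$ as $\tau \to 0$ for every $y \in X$, i.e., the strong right-continuity at $0$ of the adjoint semigroup. This is exactly where the Hilbert-space (more generally, reflexive) setting is used: the adjoint of a strongly continuous semigroup on a reflexive Banach space is again strongly continuous (see, e.g., \cite{Engel2000}), so $T(\tau)^* y \to y$ for all $y \in X$. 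Lemma~\ref{lem:uniform_conv} then gives $\|F(T(\tau) - I)\|_{\mathcal{B}(X,U)} \to 0$, which together with the previous paragraph completes the proof.

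The one genuinely nontrivial point is this last step: strong continuity of $T(\cdot)$ by itself only yields $\|(T(\tau) - I)x^0\| \to 0$ pointwise in $x^0$, not the uniform estimate $\|F(T(\tau)-I)\|_{\mathcal{B}(X,U)} \to 0$ that we need; what restores uniformity is the interplay between the compactness of $F$ and the strong continuity of $T(\cdot)^*$, which is precisely what Lemma~\ref{lem:uniform_conv} packages. Everything else reduces to the triangle inequality and the boundedness facts already recorded in Lemmas~\ref{lem:Stau} and \ref{lem:uniform_conv}.
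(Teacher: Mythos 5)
Your proposal is correct and follows essentially the same route as the paper's proof: the same decomposition $Fx(\tau)-Fx^0 = F(T(\tau)-I)x^0 + FS_\tau F x^0$, the same use of \eqref{eq:SF_limit} from Lemma~\ref{lem:Stau} for the input part, and the same application of Lemma~\ref{lem:uniform_conv} with $\Lambda=F$ and the strong continuity of $T(\tau)^*$ on the Hilbert space $X$ for the semigroup part. The only cosmetic difference is that you bound $\|FS_\tau F\|_{\mathcal{B}(X,U)}$ by $\|F\|_{\mathcal{B}(X,U)}\|S_\tau F\|_{\mathcal{B}(X)}$ before letting $\tau\to 0$, which is immaterial.
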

\begin{proof}
	Since 
	\eqref{eq:state_transition_for_zeno} yields
	\[
	x(\tau) - x^0 = (T(\tau) - I) x^0 + S_{\tau} Fx^0 \qquad \forall \tau \geq 0,
	\]
	it follows that 
	\begin{equation}
	\label{eq:Fx_bound}
	\| Fx(\tau) - Fx^0 \|_U \leq
	\|F (T(\tau) - I) + FS_{\tau }F\|_{\mathcal{B}(X,U)}   \cdot  \|x^0\|\qquad \forall \tau \geq 0.
	\end{equation}
	By Lemma~\ref{lem:Stau}, 
	\[
	\lim_{\tau \to 0 }\|FS_{\tau}F\|_{\mathcal{B}(X,U)} 
	= 
	0.
	\]
	Since $T(t)^*$ is strongly continuous (see, e.g., Theorem~2.2.6 on p.~37 in 
	\cite{Curtain1995}), we obtain
	\[
	\lim_{\tau \to 0 }
	\|(T(\tau)^* - I)x\| = 0 \qquad \forall x \in X,
	\]
	and hence 
	it follows from Lemma~\ref{lem:uniform_conv} that
	\[
	\lim_{\tau \to 0}\|F (T(\tau) - I)\|_{\mathcal{B}(X,U)}  = 0.
	\]
	Thus, for every $\varepsilon > 0$, 
	there exists $\theta >0$ such that
	for every $\tau \in [0,\theta)$,
	\begin{align}
	\|F (T(\tau) - I) + FS_{\tau }F\|_{\mathcal{B}(X,U)} 
	\leq 
	\|F (T(\tau) - I)\|_{\mathcal{B}(X,U)} + 
	\|FS_{\tau }F\|_{\mathcal{B}(X,U)}  
	< \varepsilon
	\label{eq:norm_bound_for_zeno}
	\end{align}
	Combining \eqref{eq:Fx_bound} and \eqref{eq:norm_bound_for_zeno},
	we obtain the desired result.
\end{proof}

From Lemma~\ref{thm:no_zeno},
we see that the minimum inter-event time of
the event-triggering mechanism \eqref{eq:time_seq}
is bounded from below by a strictly positive constant.
\begin{theorem}
	\label{coro:no_zeno}
	Assume that $A$ generates a strongly continuous semigroup $T(t)$ on $X$, $B \in \mathcal{B}(U,X_{-1})$, and $F \in \mathcal{B}(X,U)$ is compact.
	For the system \eqref{eq:plant},
	define the time sequence $\{t_k\}_{k\in \mathbb{N}_0}$ by the event-triggering mechanism
	\eqref{eq:time_seq}.
	For every  $\varepsilon > 0$, there exists $\theta >0$ such that 
	$\inf_{k \in \mathbb{N}_0} (t_{k+1} - t_k) \geq \theta$ for every initial state $x^0 \in X$.
\end{theorem}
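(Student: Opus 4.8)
The plan is to reduce the claim directly to Lemma~\ref{thm:no_zeno} by freezing the control input over each inter-event interval and running an induction on $k$. Fix the threshold $\varepsilon > 0$ appearing in the event-triggering mechanism \eqref{eq:time_seq}, and let $\theta > 0$ be the constant supplied by Lemma~\ref{thm:no_zeno} for this $\varepsilon$; note that $\theta$ depends only on $\varepsilon$ (through $A$, $B$, $F$), not on $x^0$, which is exactly the uniformity required in the statement.

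For the inductive step, suppose that $t_0, \dots, t_k$ have been constructed. On $[t_k, \infty)$ consider the trajectory generated by the \emph{constant} input $u \equiv F x(t_k)$, namely
\[
y(\tau) := \big(T(\tau) + S_{\tau} F\big) x(t_k), \qquad \tau \geq 0,
\]
which by Lemma~\ref{lem:Stau} takes values in $X$ for every $\tau \geq 0$. Applying Lemma~\ref{thm:no_zeno} with initial state $x(t_k) \in X$ gives
\[
\|F y(\tau) - F x(t_k)\|_U \leq \varepsilon \|x(t_k)\| \qquad \forall \tau \in [0,\theta).
\]
By the representation \eqref{eq:unique_solution}, $y$ coincides with the true solution $x$ on $[t_k, t_{k+1}]$, whatever the value of $t_{k+1}$; hence the set $\{t > t_k : \|Fx(t) - Fx(t_k)\|_U > \varepsilon \|x(t_k)\|\}$ contains no point of $(t_k, t_k + \theta)$, and therefore its infimum satisfies $t_{k+1} \geq t_k + \theta > t_k$ (with the usual convention $\inf \emptyset = \infty$, in which case $t_{k+1} - t_k \geq \theta$ holds trivially). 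In particular the sequence $\{t_k\}$ is strictly increasing with $\inf_{j} (t_{j+1} - t_j) > 0$, so the solution \eqref{eq:unique_solution} is well posed, $y = x$ on $[t_k, t_{k+1}]$ as claimed, and the induction closes. Taking the infimum over $k$ yields $\inf_{k \in \mathbb{N}_0} (t_{k+1} - t_k) \geq \theta$ for every $x^0 \in X$.

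I expect the only delicate point to be the mild circularity in the statement: the solution $x$ used inside \eqref{eq:time_seq} is itself defined through \eqref{eq:unique_solution}, which presupposes a sequence $\{t_k\}$ with positive inter-event times. The induction above resolves this by computing each event time $t_{k+1}$ from the frozen-input trajectory $y$, which is unconditionally well defined on all of $[t_k,\infty)$, and only afterwards identifying $y$ with $x$ on $[t_k, t_{k+1}]$ a posteriori. Note that no continuity of $t \mapsto Fx(t)$ is actually needed, since the argument only uses that the event set avoids the open interval $(t_k, t_k+\theta)$; if one wishes to record continuity, it follows from strong continuity of $T(t)$, the limit \eqref{eq:SF_limit}, and boundedness of $F$. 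The substantive analytic content --- the uniform smallness of $\|F(T(\tau)-I) + F S_{\tau} F\|_{\mathcal{B}(X,U)}$ as $\tau \to 0$, which is where compactness of $F$ enters --- has already been isolated in Lemma~\ref{thm:no_zeno}, so this final step requires no further analysis.
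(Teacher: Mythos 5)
Your argument is correct and is essentially the paper's own proof: both reduce the claim to Lemma~\ref{thm:no_zeno} applied at each $x(t_k)\in X$ via the representation \eqref{eq:unique_solution}, with the uniform $\theta$ coming from the operator-norm bound in that lemma. Your explicit induction to dispose of the apparent circularity in defining $\{t_k\}$ is a welcome extra precision, but it does not change the route.
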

\begin{proof}
	From \eqref{eq:continuity}, 
	it follows that $x(t_k) \in X$ for every $k \in \mathbb{N}_0$.
	Therefore, we see from \eqref{eq:unique_solution} and
	Lemma~\ref{thm:no_zeno} that, 
	for every $\varepsilon > 0$, 
	there exists $\theta > 0$ such that
	\[
	\| Fx(t_k+\tau) - Fx(t_k)\|_U\leq \varepsilon \|x(t_k)\|\qquad 
	\forall x^0 \in X,~\forall \tau \in [0,\theta),~\forall k \in \mathbb{N}_0. \]
	Thus, the
	time sequence $\{t_k\}_{k\in \mathbb{N}_0}$ satisfies
	$\inf_{k \in \mathbb{N}_0} (t_{k+1} - t_k) \geq \theta.$
\end{proof}

We next investigate 
the minimum inter-event time of 
the event-triggering mechanism \eqref{eq:time_seq_present}.
To that purpose,
we use the following estimate:
\begin{lemma}
	\label{lem:no_zeno_xt_bound}
	Assume that $T(t)$ is a strongly continuous semigroup on $X$,
	$B \in \mathcal{B}(U,X_{-1})$, and $F \in \mathcal{B}(X,U)$ is compact.
	Define $x(t)$ as in \eqref{eq:state_transition_for_zeno}.
	There exist $c_1 > 0$  and $s_1 > 0$ 
	such that the semigroup $T(t)$ satisfies
	\begin{align}
	\|T(s_1)x^0\| \geq c_1 \|x^0\|  \qquad \forall x^0 \in X  \label{eq:T1_coersive} 
	\end{align}
	if and only if there exist $c_2 \geq 1$ and $\theta > 0$ such that
	\begin{align}
	\label{eq:x0_bounded_by_x_tau}
	\|x^0\| \leq c_2 \|x(\tau)\|\qquad  \forall x^0 \in X,~\forall \tau \in [0,\theta).
	\end{align}
\end{lemma}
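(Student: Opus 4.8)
The plan is to establish the two implications separately. In both directions the only nontrivial input is Lemma~\ref{lem:Stau}, namely that $\|S_{\tau}F\|_{\mathcal{B}(X)} \to 0$ as $\tau \to 0^+$ (this is where the compactness of $F$ is used), combined with the identity $T(\tau)x^0 = x(\tau) - S_{\tau}Fx^0$ coming from \eqref{eq:state_transition_for_zeno} and the uniform boundedness of $T(t)$ on compact time intervals.

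For the implication \eqref{eq:x0_bounded_by_x_tau} $\Rightarrow$ \eqref{eq:T1_coersive}, I would fix any $\tau \in (0,\theta)$, bound $\|x(\tau)\| \le \|T(\tau)x^0\| + \|S_{\tau}F\|_{\mathcal{B}(X)}\|x^0\|$ by the triangle inequality, and substitute into \eqref{eq:x0_bounded_by_x_tau} to get $\|T(\tau)x^0\| \ge \big(c_2^{-1} - \|S_{\tau}F\|_{\mathcal{B}(X)}\big)\|x^0\|$ for all $x^0 \in X$. By Lemma~\ref{lem:Stau} I can pick $s_1 \in (0,\theta)$ with $\|S_{s_1}F\|_{\mathcal{B}(X)} \le 1/(2c_2)$, and then \eqref{eq:T1_coersive} holds with $c_1 := 1/(2c_2)$.

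For the converse \eqref{eq:T1_coersive} $\Rightarrow$ \eqref{eq:x0_bounded_by_x_tau}, set $M_0 := \sup_{0\le s\le s_1}\|T(s)\|_{\mathcal{B}(X)}$, which is finite and $\ge 1$ by strong continuity. For $\tau \in [0,s_1]$ I would write $T(s_1)x^0 = T(s_1-\tau)T(\tau)x^0 = T(s_1-\tau)\big(x(\tau) - S_{\tau}Fx^0\big)$, so that $c_1\|x^0\| \le \|T(s_1)x^0\| \le M_0\big(\|x(\tau)\| + \|S_{\tau}F\|_{\mathcal{B}(X)}\|x^0\|\big)$. Using Lemma~\ref{lem:Stau} again, choose $\theta \in (0,s_1]$ with $M_0\|S_{\tau}F\|_{\mathcal{B}(X)} \le c_1/2$ for all $\tau \in [0,\theta)$; rearranging gives $\tfrac{c_1}{2}\|x^0\| \le M_0\|x(\tau)\|$, so \eqref{eq:x0_bounded_by_x_tau} holds with $c_2 := \max\{1,\, 2M_0/c_1\}$, the constraint $c_2\ge 1$ being needed only to cover the endpoint $\tau=0$, at which $x(0)=x^0$.

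I do not expect a serious obstacle: the argument is essentially a perturbation estimate around the semigroup. The one point that needs care is that every estimate must be \emph{uniform} in $x^0$ over the unit ball of $X$, which is exactly why the operator-norm convergence $\|S_{\tau}F\|_{\mathcal{B}(X)}\to 0$ from Lemma~\ref{lem:Stau} — rather than mere pointwise convergence $S_{\tau}Fx^0 \to 0$ — is essential, and it is the only place the compactness of $F$ is invoked.
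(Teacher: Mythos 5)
Your proof is correct and follows essentially the same route as the paper's: both directions rest on the operator-norm convergence $\|S_{\tau}F\|_{\mathcal{B}(X)}\to 0$ from Lemma~\ref{lem:Stau}, the triangle inequality applied to $x(\tau)=T(\tau)x^0+S_{\tau}Fx^0$, and the uniform bound on $\|T(\tau)\|_{\mathcal{B}(X)}$ over $[0,s_1]$. The only cosmetic difference is that in the forward direction the paper first isolates the intermediate estimate $\|T(\tau)x^0\|\geq (c_1/M)\|x^0\|$ for all $\tau\in[0,s_1]$ before subtracting the $S_{\tau}F$ term, whereas you fold the two steps into one inequality; the constants and logic are the same.
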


\begin{proof}
	Suppose first that \eqref{eq:T1_coersive}  holds for some $c_1 > 0$ and $s_1 >0$.
	Since there exists $M \geq 1$ such that 
	\[
	\|T(\tau)\|_{\mathcal{B}(X)}  \leq M \qquad \forall \tau \in [0,s_1],
	\]
	it follows from \eqref{eq:T1_coersive} that 
	\[
	c_1 \|x^0\| \leq \|T(s_1)x^0\| = \|T(s_1 - \tau) T(\tau)x^0 \|
	\leq M\| T(\tau)x^0  \| \qquad \forall x^0 \in X,~\forall \tau \in [0,s_1].
	\]
	Therefore,
	\begin{equation}
	\label{eq:T1_coersive_all}
	\|T(\tau)x^0\| \geq 
	\frac{c_1}{M} \|x^0\| \qquad \forall x^0 \in X,~\forall \tau \in [0,s_1].
	\end{equation}
	By \eqref{eq:SF_limit}, 
	there exists $s_2 \in (0,s_1]$ such that 
	\begin{equation}
	\label{eq:StF_bound}
	\|S_\tau Fx^0\| \leq \|S_\tau F\|_{\mathcal{B}(X)}  \cdot \|x^0\|  \leq \frac{c_1}{2M} \|x^0\|\qquad
	\forall x^0 \in X,~ \forall \tau \in [0,s_2].
	\end{equation}
	Combining \eqref{eq:T1_coersive_all} and \eqref{eq:StF_bound},
	we obtain
	\begin{align}
	\label{eq:TSF_coersive}
	\|x(\tau)\|
	\geq 
	\|T(\tau)x^0\| - \|S_\tau Fx^0\|
	\geq  
	\frac{c_1}{2M} \|x^0\| \qquad 
	\forall x^0 \in X,~\forall \tau \in [0,s_2].
	\end{align}
	Therefore, \eqref{eq:x0_bounded_by_x_tau} holds
	with $c_2 := 2M/c_1$ and $\theta := s_2$.
	
	Conversely, if \eqref{eq:x0_bounded_by_x_tau} holds
	for some $c_2 \geq 1$ and  $\theta >0$, then
	\[
	\|x^0\| \leq
	c_2 \|T(\tau) x^0\| + c_2\|S_\tau Fx^0\|\qquad 
	\forall x^0 \in X,~\forall \tau \in [0,\theta).
	\]
	Using \eqref{eq:SF_limit} again,
	we find that 
	there exists $s_1 \in (0,\theta)$ such that
	\[
	\|S_\tau F\|_{\mathcal{B}(X)}  \leq \frac{1}{2c_2}\qquad \forall \tau \in [0,s_1].
	\]
	Hence,
	\[
	\|T(\tau) x^0\| 
	\geq \frac{1}{c_2} \left(
	\|x^0\| - c_2 \|S_{\tau} F x^0\|
	\right)
	\geq \frac{1}{2c_2} \|x^0\| \qquad \forall
	x^0 \in X,~\forall \tau \in [0,s_1].
	\]
	Thus the desired inequality \eqref{eq:T1_coersive}  holds.
\end{proof}
%

\begin{remark}
	{\em
		Suppose that  $B$ is bounded, i.e., $B \in \mathcal{B}(U,X)$. Then
		\[\lim_{\tau \to 0} \|S_{\tau}\|_{\mathcal{B}(U,X)}  = 0,\] and hence 
		the compactness of $F$ is not required in Lemma~\ref{lem:no_zeno_xt_bound}.
	}
\end{remark}

\begin{remark}
	{\em
		The condition \eqref{eq:T1_coersive} appears also in   
		Theorem 2 of
		\cite{Pazy1972} for 
		the applicability of the Lyapunov stability theorem, and
		Corollary 1 of
		\cite{Pazy1972} shows that
		$T(t)$ satisfies \eqref{eq:T1_coersive} for some $c_1 >0$ and $s_1 > 0$ and
		the range of $T(t)$ is dense in $X$ for some $t \in (0,s_1]$ if and only if
		$T(t)$ can be extended to a strongly continuous group on $X$.
	}
\end{remark}

Using Lemmas~\ref{thm:no_zeno} and \ref{lem:no_zeno_xt_bound}, we show that 
the minimum inter-event time of
the event-triggering mechanism \eqref{eq:time_seq_present} 
is bounded from below by a strictly positive constant
if $T(t)$ satisfies \eqref{eq:T1_coersive}
for some $c_1 >0$ and $s_1 > 0$.
\begin{theorem}
	\label{thm:no_zeno_present}
	Assume that $A$ generates a strongly continuous semigroup $T(t)$ on $X$, $B \in \mathcal{B}(U,X_{-1})$, and $F \in \mathcal{B}(X,U)$ is compact.
	Assume further that
	the semigroup $T(t)$ satisfies \eqref{eq:T1_coersive} for some $c_1 > 0$ and $s_1 >0$.
	For the system \eqref{eq:plant},
	define the time sequence $\{t_k\}_{k\in \mathbb{N}_0}$ by
	the event-triggering mechanism
	\eqref{eq:time_seq_present}.
	For every  $\varepsilon > 0$, there exists $\theta >0$ such that 
	$\inf_{k \in \mathbb{N}_0} (t_{k+1} - t_k) \geq \theta$ for every initial state $x^0 \in X$.
\end{theorem}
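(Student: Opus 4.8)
The plan is to mimic the proof of Theorem~\ref{coro:no_zeno}, combining the two auxiliary lemmas on each inter-event interval; the only genuinely new point is that the triggering threshold in \eqref{eq:time_seq_present} is $\varepsilon\|x(t)\|$ rather than $\varepsilon\|x(t_k)\|$, so I must first replace $\|x(t)\|$ by a lower bound proportional to $\|x(t_k)\|$, and this is precisely what the coercivity hypothesis \eqref{eq:T1_coersive} buys through Lemma~\ref{lem:no_zeno_xt_bound}.

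Concretely, fix $\varepsilon > 0$. Since $T(t)$ satisfies \eqref{eq:T1_coersive}, Lemma~\ref{lem:no_zeno_xt_bound} supplies constants $c_2 \geq 1$ and $\theta_1 > 0$ such that $\|x^0\| \leq c_2\|x(\tau)\|$ for all $x^0 \in X$ and all $\tau \in [0,\theta_1)$, where $x(\tau) = (T(\tau) + S_\tau F)x^0$. Applying Lemma~\ref{thm:no_zeno} with the threshold $\varepsilon/c_2$ in place of $\varepsilon$ produces $\theta_2 > 0$ such that $\|Fx(\tau) - Fx^0\|_U \leq (\varepsilon/c_2)\|x^0\|$ for all $x^0 \in X$ and all $\tau \in [0,\theta_2)$. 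Setting $\theta := \min\{\theta_1,\theta_2\}$ and chaining the two estimates gives
\[
\|Fx(\tau) - Fx^0\|_U \leq \frac{\varepsilon}{c_2}\|x^0\| \leq \varepsilon\|x(\tau)\| \qquad \forall x^0 \in X,~\forall \tau \in [0,\theta).
\]

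It then remains to transfer this uniform bound to the closed-loop trajectory. By \eqref{eq:continuity} we have $x(t_k) \in X$ for every $k \in \mathbb{N}_0$, and by \eqref{eq:unique_solution} the solution on $[t_k,t_{k+1}]$ reads $x(t_k+\tau) = (T(\tau)+S_\tau F)x(t_k)$; hence the displayed inequality applies with $x^0$ replaced by $x(t_k)$, yielding $\|Fx(t_k+\tau)-Fx(t_k)\|_U \leq \varepsilon\|x(t_k+\tau)\|$ for all $\tau \in [0,\theta)$ and all $k$. By the definition \eqref{eq:time_seq_present} this forces $t_{k+1}-t_k \geq \theta$, so $\inf_{k\in\mathbb{N}_0}(t_{k+1}-t_k) \geq \theta$, and since $\theta$ does not depend on $x^0$, the bound is uniform in the initial state. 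The one step deserving care is the rescaling by $c_2$: this constant originates from the implication \eqref{eq:T1_coersive}$\,\Rightarrow\,$\eqref{eq:x0_bounded_by_x_tau} in Lemma~\ref{lem:no_zeno_xt_bound}, which is the only place the coercivity of $T(t)$ is actually used --- and it is indispensable, since Example~\ref{ex:zeno} shows that without it the right-hand side $\varepsilon\|x(t)\|$ of \eqref{eq:time_seq_present} can decay strictly faster than the left-hand side, forcing the inter-event times to zero. Everything else is the routine intersection of two ``for all sufficiently small $\tau$'' statements, so I expect no further difficulty.
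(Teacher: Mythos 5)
Your proposal is correct and follows essentially the same route as the paper: the paper's proof simply says ``combining Lemmas~\ref{thm:no_zeno} and \ref{lem:no_zeno_xt_bound}'', and your rescaling of the threshold to $\varepsilon/c_2$ before invoking Lemma~\ref{thm:no_zeno}, followed by the bound $\|x(t_k)\|\leq c_2\|x(t_k+\tau)\|$ from Lemma~\ref{lem:no_zeno_xt_bound}, is exactly the intended way to make that combination precise. No gaps.
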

\begin{proof}
	Combining Lemmas~\ref{thm:no_zeno} and \ref{lem:no_zeno_xt_bound},
	we have that, 
	for every $\varepsilon > 0$, 
	there exists $\theta > 0$ such that
	\[
	\| Fx(t_k+\tau) - Fx(t_k)\|_U \leq \varepsilon \|x(t_k+\tau)\|\qquad 
	\forall x^0 \in X,~\forall \tau \in [0,\theta),~ \forall k \in \mathbb{N}_0. \]
	Thus the
	time sequence $\{t_k\}_{k\in \mathbb{N}_0}$ satisfies
	$\inf_{k \in \mathbb{N}_0} (t_{k+1} - t_k) \geq \theta$.
\end{proof}

We conclude this section with a result on the
continuous dependence of solutions of the evolution equation \eqref{eq:plant}  under
the event-triggering mechanism \eqref{eq:time_seq_present} on initial states.
The technical difficulty  is that
the updating instants of control inputs are different
depending on initial states in event-triggered control systems.
The analysis of continuous dependence on initial states
is straightforward but lengthy. The proof of the following theorem
can be found in the Appendix~\ref{sec:appendixA}.
\begin{theorem}
	\label{thm:continuity_initial_state}
	Assume that $A$ generates a strongly continuous semigroup $T(t)$ 
	on $X$, $B \in \mathcal{B}(U,X_{-1})$, and $F \in \mathcal{B}(X,U)$ is compact.
	Assume further that 
	the semigroup $T(t)$ satisfies \eqref{eq:T1_coersive} for 
	some $c_1 > 0$ and $s_1 >0$.
	Let $x$ be the solution of
	the evolution equation \eqref{eq:plant} with the initial state $x^0 \in X$ 	under
	the 
	event-triggering mechanism \eqref{eq:time_seq_present} with an arbitrary threshold $\varepsilon >0$.
	For every $t_{\rm e} >0$ and every $\delta >0$, there exists $\delta_0 >0$ such that 
	for every $\zeta^0 \in X$ satisfying $\|x^0 - \zeta^0\| < \delta_0$,
	\[
	\|x(t) - \zeta(t)\| < \delta \qquad \forall t \in [0,t_{\rm e}],
	\]
	where $\zeta$ is the solution of the evolution equation \eqref{eq:plant}
	with the initial state $\zeta^0$ under 	
	the 
	event-triggering mechanism \eqref{eq:time_seq_present}.
\end{theorem}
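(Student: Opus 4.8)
The plan is to argue by induction over the finitely many event times on the horizon, comparing the two closed-loop trajectories on each inter-event interval while simultaneously controlling the distance between the corresponding event times.

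First I would fix the horizon. Applying Theorem~\ref{thm:no_zeno_present} with the given threshold $\varepsilon$ yields a single $\theta>0$ such that, for \emph{every} initial state, all inter-event times of the mechanism \eqref{eq:time_seq_present} are at least $\theta$; hence every solution has at most $N:=\lceil t_{\rm e}/\theta\rceil+2$ events on $[0,t_{\rm e}+\theta]$, a bound independent of the initial state. (I would work on the slightly enlarged interval $[0,t_{\rm e}+\theta]$ so that an event of one trajectory near $t_{\rm e}$ cannot be ``lost'' by the other.) Using the representation \eqref{eq:unique_solution} together with the local boundedness $\sup_{0\le\tau\le t_{\rm e}+\theta}\|T(\tau)\|\le M$ and $\sup_{0\le\tau\le t_{\rm e}+\theta}\|S_\tau\|_{\mathcal{B}(U,X)}<\infty$ from Lemma~\ref{lem:Stau}, one gets $\|x(t_k+\tau)\|\le(M+C\|F\|)\|x(t_k)\|$ on each inter-event interval; iterating over the at most $N$ intervals produces an a priori bound $\sup_{[0,t_{\rm e}+\theta]}\|x(t)\|\le C_1\|x^0\|$ with $C_1=C_1(t_{\rm e})$, and the same for $\zeta$ with a common constant once $\|x^0-\zeta^0\|\le1$.

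The core of the proof is the inductive step. Write $\{t_k\}$ and $\{s_k\}$ for the event times of $x$ and $\zeta$, and $e:=x-\zeta$. For the base case, on $[0,\min\{t_1,s_1\})$ both inputs are frozen at the initial states, so $e(t)=(T(t)+S_tF)(x^0-\zeta^0)$ and $\|e(t)\|\le(M+C\|F\|)\|x^0-\zeta^0\|$. Inductively, assume that up to index $k$ the two trajectories have matching event counts, that $|t_j-s_j|$ is as small as desired for $j\le k$, and that $\|e\|$ is as small as desired on $[0,\min\{t_k,s_k\}]$, provided $\|x^0-\zeta^0\|$ is taken small enough. On the next inter-event interval the triggering functions $g_x(t)=\|Fx(t)-Fx(t_k)\|_U-\varepsilon\|x(t)\|$ and $g_\zeta(t)=\|F\zeta(t)-F\zeta(s_k)\|_U-\varepsilon\|\zeta(t)\|$ differ by at most a constant times $\|e\|_\infty+\|x(t_k)-\zeta(s_k)\|$, and $\|x(t_k)-\zeta(s_k)\|$ is small because $x(t_k)\approx\zeta(t_k)$ by the inductive hypothesis and $\zeta(t_k)\approx\zeta(s_k)$ by uniform continuity of $\zeta$ on the horizon together with $|t_k-s_k|$ small. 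One then shows that $s_{k+1}$ lies in a small neighbourhood of $t_{k+1}$, deduces from the closeness of $e$ and of the two event times that $\|x(t_{k+1})-\zeta(s_{k+1})\|$ is small, and restarts the representation-formula estimate on the following interval. After at most $N$ steps this gives $\sup_{[0,t_{\rm e}]}\|e(t)\|<\delta$.

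The main obstacle, and the reason the argument is ``straightforward but lengthy'', is precisely the step that pins $s_{k+1}$ near $t_{k+1}$. Since $t_{k+1}$ is defined by an infimum with a strict inequality, $g_x$ is only known to be nonpositive, not strictly negative, on $[t_k,t_{k+1})$, so it could touch the threshold at an interior time $t^{*}$ without crossing it; a small perturbation might then push $g_\zeta$ above the threshold there, forcing an event of $\zeta$ well before $t_{k+1}$ and a drift of order $\|S_{t-t^{*}}F\,(x(t^{*})-x(t_k))\|$ that need not vanish as $\zeta^0\to x^0$. Controlling this is where the coercivity hypothesis \eqref{eq:T1_coersive}, via Lemmas~\ref{thm:no_zeno} and \ref{lem:no_zeno_xt_bound}, enters essentially: it keeps $\|x(t)\|$ bounded below by a multiple of $\|x(t_k)\|$ on the relevant interval and is what renders detection of the event at $t_{k+1}$ robust under the perturbation. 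An arguably cleaner alternative is a contradiction argument: if the claim failed, one would take $\zeta^0_n\to x^0$ with $\sup_{[0,t_{\rm e}]}\|x-\zeta_n\|\ge\delta$; each $\zeta_n$ has at most $N$ event times, all in a bounded set, so along a subsequence these converge, and using the strong continuity of $T(\cdot)$ and the operator-norm continuity of $\tau\mapsto S_\tau F$ (a consequence of \eqref{eq:SF_limit}) one shows that $\zeta_n$ converges uniformly to a limit trajectory satisfying the same event rule from $x^0$; uniqueness of solutions of \eqref{eq:plant} under \eqref{eq:time_seq_present} then forces this limit to be $x$, a contradiction. The same infimum subtlety reappears there as the verification that the limiting event times obey the rule, and is again where \eqref{eq:T1_coersive} is used.
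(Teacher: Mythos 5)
Your overall architecture coincides with the paper's: Appendix~\ref{sec:appendixA} proves the theorem by iterating a one-step perturbation lemma (Lemma~\ref{lem:updating_time}) over the finitely many event times supplied by Theorem~\ref{thm:no_zeno_present}, working with the operator $\Delta(\tau)=T(\tau)+S_\tau F$ and its continuity properties (Lemma~\ref{lem:Delta_continuity}) exactly as you propose, and your identification of the delicate point---stability of the event times under perturbation of the initial state, given that they are defined as infima over a strict inequality---is the right one.

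Where your proposal has a genuine gap is in how that point is closed. You assert that the coercivity hypothesis \eqref{eq:T1_coersive}, via Lemmas~\ref{thm:no_zeno} and \ref{lem:no_zeno_xt_bound}, is ``what renders detection of the event at $t_{k+1}$ robust under the perturbation'' because it keeps $\|x(t)\|$ bounded below by a multiple of $\|x(t_k)\|$. It does not: those lemmas only yield a dead time $\theta_{\rm m}>0$ after each update during which no event can occur, and the lower bound $\|x(t_k)\|\le c_2\|x(t_k+\tau)\|$ is established only for $\tau\in[0,\theta_{\rm m})$, not up to $t_{k+1}$; moreover, even where it holds, a lower bound on $\|x(t)\|$ does nothing to exclude the triggering function touching the threshold with equality at an interior time. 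In the paper, \eqref{eq:T1_coersive} is used solely to obtain $\theta_{\rm m}$ (hence finitely many events, $t_{k+1}\ge t_k+\theta_{\rm m}$, and $\eta_k<t_{k+1}$). The step that pins $\eta_{k+1}$ from above is closed by a different observation: since $t_{k+1}$ is an infimum, for every $\kappa_1>0$ there is a time $t_{k+1}+\kappa$ with $\kappa<\kappa_1$ at which the triggering inequality is violated with a \emph{strict} quantitative margin $\varepsilon_0>0$ (see \eqref{eq:objective_eq}), and it is this fixed slack---not coercivity---that survives the perturbation terms $\phi_1(\kappa_0,\delta_0)$, $\phi_2(\kappa_0,\delta_0)\to 0$ and forces $\eta_{k+1}<t_{k+1}+\kappa_1$. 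The opposite direction (no premature trigger of $\zeta$), which is precisely your ``touching without crossing'' scenario, is dispatched in the paper by the symmetric argument with the two trajectories exchanged; coercivity plays no role there either. As written, your plan would stall at this step, and the same hole reappears in your alternative compactness argument at the point where you must verify that the limiting event times obey the triggering rule; in both cases the repair is the strict-margin argument of Lemma~\ref{lem:updating_time}, not an appeal to \eqref{eq:T1_coersive}.
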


\section{Stability analysis under bounded control}
In this section, we analyze closed-loop stability in the case
where the control operator $B$ is bounded.
The objective is to show that if
the feedback operator $F$ is compact and if 
the semigroup generated by $A+BF$ is exponentially stable, then
the event-triggered control system is exponentially stable, provided that
the threshold $\varepsilon$ is sufficiently small.

Choose $\tau_{\max} > 0$ arbitrarily.
We first define a time sequence
$\{t_k\}_{k\in \mathbb{N}_0}$ by
\begin{subequations}
	\label{eq:time_seq_upper_bound}
	\begin{align}
	t_0 &:=0,\quad \psi_{k+1} :=\inf \big\{t > t_{k}:\|Fx(t) - Fx(t_{k})\|_U > \varepsilon \|x(t_{k})\|
	\big\} \\
	t_{k+1} &:= 
	\min\{ t_k + \tau_{\max},~\psi_{k+1}\}
	\qquad \forall k\in \mathbb{N}_0.
	\end{align}
\end{subequations}
The above event-triggering mechanism is based on \eqref{eq:time_seq} and satisfies
the time constraint $t_{k+1} - t_k \leq \tau_{\max}$ for every $k \in \mathbb{N}_0$.
\begin{theorem}
	\label{thm:bounded1}
	Assume that $A$ generates a strongly continuous semigroup $T(t)$ on $X$, $B \in \mathcal{B}(U,X)$, and
	$F \in \mathcal{B}(X,U)$ is compact.
	Assume that the semigroup $T_{BF}(t)$ generated by $A+BF$
	is exponentially stable, i.e.,
	there exists $M \geq 1$ and $\omega >0$ such that
	\begin{equation}
	\label{eq:TBF_exp_stability}
	\|T_{BF}(t)\|_{\mathcal{B}(X)} \leq M e^{-\omega t}\qquad \forall t \geq 0.
	\end{equation}
	If the threshold
	$\varepsilon >0$ satisfies
	\begin{equation}
	\label{eq:ep_cond}
	\varepsilon < \frac{\omega}{M \|B\|_{\mathcal{B}(U,X)} },
	\end{equation}
	then
	for every $\tau_{\max} >0$,
	the system \eqref{eq:plant} with  the event-triggering mechanism  \eqref{eq:time_seq_upper_bound}
	is exponentially stable and
	its stability margin is at least
	$\gamma$ defined by
	\begin{equation}
	\label{eq:bounded_decay_rate}
	\gamma := 
	\frac{-\log \big((1-\varepsilon_0)e^{-\omega \tau_{\max}} + \varepsilon_0 \big)}{\tau_{\max}},
	\text{~~where~~}	\varepsilon_0 := \varepsilon
	\frac{M \|B\|_{\mathcal{B}(U,X)}  }{\omega}.
	\end{equation}		
\end{theorem}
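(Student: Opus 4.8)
The plan is to prove exponential decay with respect to an equivalent norm adapted to the nominal semigroup $T_{BF}$, and then transfer the bound back to $\|\cdot\|$. The underlying idea is that on each inter-event interval $[t_k,t_{k+1})$ the closed-loop equation is a perturbation of the exponentially stable equation $\dot x=(A+BF)x$, and the event rule in \eqref{eq:time_seq_upper_bound} bounds the size of this perturbation exactly.

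First I would renorm $X$. Using \eqref{eq:TBF_exp_stability}, set $\|x\|_{*}:=\sup_{t\ge 0}e^{\omega t}\,\|T_{BF}(t)x\|$; then $\|x\|\le\|x\|_{*}\le M\|x\|$, the semigroup property gives $\|T_{BF}(t)x\|_{*}\le e^{-\omega t}\|x\|_{*}$ for all $t\ge 0$ (so $T_{BF}$ is a contraction of rate $\omega$ in this norm), and $\|Bu\|_{*}\le M\|B\|_{\mathcal{B}(U,X)}\|u\|_U$ for $u\in U$. Next, Theorem~\ref{coro:no_zeno} ensures that no Zeno behaviour occurs, so $\{t_k\}$ is strictly increasing, unbounded, and the solution \eqref{eq:unique_solution} is globally defined. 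Now fix $k$ and $\tau\in[0,t_{k+1}-t_k]$. On $(t_k,t_{k+1})$ I would rewrite \eqref{eq:event_dynamics} as $\dot x(t)=(A+BF)x(t)-B\bigl(Fx(t)-Fx(t_k)\bigr)$; since $B$ is bounded and $t\mapsto Fx(t)$ is continuous, this holds in $X$ and the variation-of-constants formula (which agrees with \eqref{eq:unique_solution}) gives
\[
x(t_k+\tau)=T_{BF}(\tau)x(t_k)-\int_0^\tau T_{BF}(\tau-s)\,B\bigl(Fx(t_k+s)-Fx(t_k)\bigr)\,ds .
\]
From the definition of $\psi_{k+1}$ and continuity of $Fx(\cdot)$ one gets $\|Fx(t_k+s)-Fx(t_k)\|_U\le\varepsilon\|x(t_k)\|$ for every $s\in[0,t_{k+1}-t_k]$. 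Taking $\|\cdot\|_{*}$ and inserting the renorming bounds yields
\[
\|x(t_k+\tau)\|_{*}\le e^{-\omega\tau}\|x(t_k)\|_{*}+\varepsilon_0\bigl(1-e^{-\omega\tau}\bigr)\|x(t_k)\|_{*}=\phi(\tau)\,\|x(t_k)\|_{*},
\]
with $\phi(\tau):=(1-\varepsilon_0)e^{-\omega\tau}+\varepsilon_0$ and $\varepsilon_0$ as in \eqref{eq:bounded_decay_rate}.

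The crucial step is to pass from this per-interval factor to a uniform exponential rate. Condition \eqref{eq:ep_cond} is exactly $\varepsilon_0\in(0,1)$, which forces $\phi(\tau_{\max})\in(0,1)$, so $\gamma$ in \eqref{eq:bounded_decay_rate} is well-defined and strictly positive. A short computation gives $(\log\phi)''(\tau)=\varepsilon_0(1-\varepsilon_0)\omega^2 e^{-\omega\tau}/\phi(\tau)^2>0$, so $\log\phi$ is convex on $[0,\tau_{\max}]$; since $\log\phi(0)=0$ and $\log\phi(\tau_{\max})=-\gamma\tau_{\max}$, convexity gives $\phi(\tau)\le e^{-\gamma\tau}$ for all $\tau\in[0,\tau_{\max}]$. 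Then for $t\ge 0$, choosing $k$ with $t\in[t_k,t_{k+1})$, writing $\tau:=t-t_k\le\tau_{\max}$ and using $t_{j+1}-t_j\le\tau_{\max}$ for all $j$, I would iterate the one-step estimate:
\[
\|x(t)\|_{*}\le\phi(\tau)\prod_{j=0}^{k-1}\phi(t_{j+1}-t_j)\,\|x^0\|_{*}\le e^{-\gamma\tau}\prod_{j=0}^{k-1}e^{-\gamma(t_{j+1}-t_j)}\,\|x^0\|_{*}=e^{-\gamma t}\|x^0\|_{*},
\]
whence $\|x(t)\|\le\|x(t)\|_{*}\le Me^{-\gamma t}\|x^0\|$, i.e.\ exponential stability with stability margin at least $\gamma$.

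I expect the main obstacle to be the convexity step: on short intervals $\phi$ contracts only weakly ($\phi(0)=1$), so the trivial bound $\phi\le 1$ is useless, and the stated $\gamma$ must be shown valid no matter how the inter-event times are distributed over $(0,\tau_{\max}]$ — convexity of $\log\phi$ is the clean way to obtain this. A secondary but essential point is that the renorming cannot be skipped: carrying the constant $M$ from \eqref{eq:TBF_exp_stability} through the product over the (possibly arbitrarily many) inter-event intervals would destroy the estimate, so the contraction must hold with constant $1$ in the chosen norm.
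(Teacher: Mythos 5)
Your proof is correct and follows essentially the same route as the paper's: the same renorming $|x|:=\sup_{t\ge0}e^{\omega t}\|T_{BF}(t)x\|$, the same variation-of-constants representation of the solution on each inter-event interval, the same use of the trigger rule to bound the input error by $\varepsilon\|x(t_k)\|$, and the same per-interval contraction factor $\phi(\tau)=(1-\varepsilon_0)e^{-\omega\tau}+\varepsilon_0$ iterated over the intervals. The only difference is that where the paper simply asserts that $\tau\mapsto-\log\phi(\tau)/\tau$ is monotonically decreasing (which is exactly what is needed to get $\phi(\tau)\le e^{-\gamma\tau}$ for all $\tau\le\tau_{\max}$), you supply the justification via convexity of $\log\phi$ together with $\log\phi(0)=0$ --- a correct and welcome filling-in of a detail rather than a different argument.
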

\begin{proof}
	As in the proof of
	Theorem 5.2 on p.~19 in \cite{Pazy1983} and
	Theorem 3.1 in \cite{Logemann2003}, we introduce a new norm $|\cdot |$ on $X$, which is 
	defined by
	\[
	|x| := \sup_{t \geq 0} \|e^{\omega t} T_{BF}(t)x\|.
	\]
	This norm satisfies
	\begin{equation}
	\label{eq:new_norm_prop}
	\|x\| \leq |x| \leq M \|x\|,\quad
	|T_{BF}(t)x| \leq e^{-\omega t} |x|
	\qquad \forall x\in X,~ \forall t \geq 0;
	\end{equation}
	see, e.g.,  the proof of Theorem 3.1 in \cite{Logemann2003}.

	Noting that
	\begin{align}
	\dot x (t) 
	= (A+BF) x(t) - B\big(Fx(t) - Fx(t_k)\big)\qquad \forall t \in (t_k,t_{k+1}),~\forall k \in \mathbb{N}_0, \label{eq:diff_eq_ABF}
	\end{align}
	we see from a routine calculation (see, e.g., Exercise 3.3 in \cite{Curtain1995}) that
	$x(t_k+\tau)$ given in \eqref{eq:unique_solution} can be written as
	\begin{equation*}
	x(t_k+\tau) = T_{BF}(\tau) x(t_k) - \int^\tau_0 T_{BF}(\tau-s) B\big(Fx(t_k+s) - Fx(t_k)\big) ds
	\end{equation*}
	for every $\tau \in (0,t_{k+1}-t_k]$.
	Since the event-triggering mechanism \eqref{eq:time_seq_upper_bound} 
	guarantees
	\[
	\sup_{0\leq s < t_{k+1} -t_k }\|Fx(t_k+s) - Fx(t_k) \|_U \leq \varepsilon \|x(t_k)\|
	\qquad \forall k \in \mathbb{N}_0,
	\]
	the properties \eqref{eq:new_norm_prop} yield
	\begin{align*}
	|x(t_{k}+\tau)| 
	&\leq e^{-\omega \tau} |x(t_k)| +  \int^{\tau}_0e^{-\omega (\tau-s)} \big|B\big(Fx(t_k+s) - Fx(t_k)\big) \big| ds \\
	&\leq e^{-\omega \tau} |x(t_k)| +  \frac{1 - e^{-\omega \tau}}{\omega} M \|B\|_{\mathcal{B}(U,X)}   \cdot \sup_{0\leq s < \tau }\|Fx(t_k+s) - Fx(t_k) \|_U \\
	&\leq \left( (1-\varepsilon_0)e^{-\omega \tau} + \varepsilon_0 \right) \cdot |x(t_k)|
	\qquad  \forall \tau \in  (0,t_{k+1}-t_k],~ \forall k\in \mathbb{N}_0,
	\end{align*}
	where $\varepsilon_0$ is defined as in \eqref{eq:bounded_decay_rate}
	and satisfies $0< \varepsilon_0 < 1$ from \eqref{eq:ep_cond}.
	
	The function $f:(0,\infty) \to \mathbb{R}$ defined by
	\[
	f(\tau) :=
	\frac{-\log \big((1-\varepsilon_0)e^{-\omega \tau} + \varepsilon_0 \big)}{\tau}
	\]
	is monotonically decreasing in $(0,\infty)$. Since 
	\[
	(1-\varepsilon_0)e^{-\omega \tau} + \varepsilon_0 < 
	(1-\varepsilon_0) + \varepsilon_0= 1\qquad
	\forall \tau > 0,
	\]
	it follows that  $f(\tau) > 0$ for every $\tau>0$.
	Therefore, $\gamma := f(\tau_{\max})$
	satisfies $\gamma > 0$ and
	\begin{align}
	\label{eq:x_k1_k}
	|x(t_{k} + \tau)| \leq e^{-\gamma \tau} |x(t_k)|\qquad \forall \tau \in  (0,t_{k+1}-t_k],~ \forall k\in \mathbb{N}_0.
	\end{align}
	Using \eqref{eq:x_k1_k} recursively, we obtain
	\[
	|x(t)| \leq e^{-\gamma t} |x^0|\qquad  \forall x^0 \in X,~\forall t \geq0.
	\]
	Thus, 
	\eqref{eq:new_norm_prop} yields
	\[
	\|x(t)\| \leq |x(t)| \leq e^{-\gamma t} |x^0| \leq M e^{-\gamma t} \|x^0\|\qquad 
	\forall x^0 \in X,~\forall t \geq0.
	\]
	This completes the proof.
\end{proof}

Next we define the time sequence
$\{t_k\}_{k\in \mathbb{N}_0}$ by
the event-triggering mechanism \eqref{eq:time_seq_present}
and show the exponential stability of the closed-loop system.
Instead of the trajectory-based approach in Theorem~\ref{thm:bounded1},
we here apply the Lyapunov stability theorem.
\begin{theorem}
	\label{thm:bounded2}
	Assume that $A$ generates a strongly continuous semigroup $T(t)$ on $X$, $B \in \mathcal{B}(U,X)$, and 
	$F \in \mathcal{B}(X,U)$ is compact.
	Assume further that
	the semigroup $T(t)$ satisfies \eqref{eq:T1_coersive} for some $c_1 > 0$ and $s_1 >0$
	and that
	the semigroup $T_{BF}(t)$ generated by $A+BF$ 
	is exponentially stable.
	If  the threshold
	$\varepsilon >0$ satisfies
	\begin{equation}
	\label{eq:ep_cond2}
	\varepsilon < \frac{1}{2 \|PB\|_{\mathcal{B}(U,X)} },
	\text{~~where~~} P x:=
	\int^{\infty}_0 T_{BF}(t)^* T_{BF}(t)x dt\quad \forall x \in X,
	\end{equation}
	then the 
	system \eqref{eq:plant} with the event-triggering mechanism \eqref{eq:time_seq_present}
	is exponentially stable 
	and
	its stability margin is at least
	$\gamma$ defined by
	\begin{equation}
	\label{eq:bounded_decay_rate2}
	\gamma := 
	\frac{1-2\varepsilon\|PB\|_{\mathcal{B}(U,X)}}{2\int_0^{\infty} \|T_{BF}(t)\|_{\mathcal{B}(X)}^2 dt}.
	\end{equation}
\end{theorem}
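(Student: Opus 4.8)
The plan is to apply the Lyapunov stability theorem with the quadratic functional generated by the Lyapunov operator $P$ of \eqref{eq:ep_cond2}, treating the event-triggered implementation as a perturbation of the nominal closed loop $\dot x=(A+BF)x$ and absorbing it by means of the triggering rule \eqref{eq:time_seq_present}.

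First, all hypotheses of Theorem~\ref{thm:no_zeno_present} are in force, so the sequence $\{t_k\}_{k\in\mathbb{N}_0}$ generated by \eqref{eq:time_seq_present} satisfies $\inf_{k\in\mathbb{N}_0}(t_{k+1}-t_k)\geq\theta>0$; in particular the solution $x$ of \eqref{eq:unique_solution} is well defined on $\mathbb{R}_+$. Since $T_{BF}(t)$ is exponentially stable, $P$ in \eqref{eq:ep_cond2} is a well-defined self-adjoint nonnegative operator in $\mathcal{B}(X)$ with $\langle Px,x\rangle=\int_0^\infty\|T_{BF}(t)x\|^2\,dt\leq\kappa\|x\|^2$ for all $x\in X$, where $\kappa:=\int_0^\infty\|T_{BF}(t)\|_{\mathcal{B}(X)}^2\,dt<\infty$, and it is well known that $2\re\langle Px,(A+BF)x\rangle=-\|x\|^2$ for every $x\in D(A+BF)=D(A)$.

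Next I would differentiate $V(x(t)):=\langle Px(t),x(t)\rangle$ along the trajectory. Taking first $x^0\in D(A)$, formula \eqref{eq:unique_solution} gives $x(t)\in D(A)$ for all $t$ and $x|_{[t_k,t_{k+1}]}\in C^1([t_k,t_{k+1}],X)$, so $V(x(\cdot))$ is continuous on $\mathbb{R}_+$, and on each interval $(t_k,t_{k+1})$, using \eqref{eq:diff_eq_ABF} and the Lyapunov equation,
\begin{align*}
\frac{d}{dt}V(x(t)) &= 2\re\langle Px(t),(A+BF)x(t)\rangle - 2\re\langle B^{*}Px(t),Fx(t)-Fx(t_k)\rangle_U \\
&\leq -\|x(t)\|^2 + 2\|PB\|_{\mathcal{B}(U,X)}\,\|x(t)\|\,\|Fx(t)-Fx(t_k)\|_U,
\end{align*}
where we used $\|B^{*}P\|_{\mathcal{B}(X,U)}=\|PB\|_{\mathcal{B}(U,X)}$ (as $P=P^{*}$). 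By \eqref{eq:time_seq_present} we have $\|Fx(t)-Fx(t_k)\|_U\leq\varepsilon\|x(t)\|$ on $[t_k,t_{k+1})$, so with $\mu:=1-2\varepsilon\|PB\|_{\mathcal{B}(U,X)}$, positive by \eqref{eq:ep_cond2}, we obtain $\frac{d}{dt}V(x(t))\leq-\mu\|x(t)\|^2\leq-(\mu/\kappa)\,V(x(t))=-2\gamma\,V(x(t))$, and hence $V(x(t))\leq e^{-2\gamma t}V(x^0)\leq\kappa e^{-2\gamma t}\|x^0\|^2$ for all $t\geq0$.

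Finally I would convert this into a bound on $\|x(t)\|$, which requires the coercivity estimate $V(x)\geq c\|x\|^2$, and this is exactly where hypothesis \eqref{eq:T1_coersive} is needed. As in the proof of Lemma~\ref{lem:no_zeno_xt_bound}, \eqref{eq:T1_coersive} yields $\|T(\tau)x\|\geq(c_1/M_0)\|x\|$ for $\tau\in[0,s_1]$, where $M_0:=\sup_{\tau\in[0,s_1]}\|T(\tau)\|_{\mathcal{B}(X)}$, and the perturbation identity $T_{BF}(\tau)x=T(\tau)x+\int_0^\tau T(\tau-s)BFT_{BF}(s)x\,ds$ (valid since $B$ is bounded) gives $\|T_{BF}(\tau)-T(\tau)\|_{\mathcal{B}(X)}\to0$ as $\tau\to0$; hence $\|T_{BF}(\tau)x\|\geq c'\|x\|$ for all $\tau\in[0,s_2]$, for some $s_2\in(0,s_1]$ and $c'>0$, so $V(x)\geq\int_0^{s_2}\|T_{BF}(\tau)x\|^2\,d\tau\geq s_2(c')^2\|x\|^2=:c\|x\|^2$. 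Then $\|x(t)\|^2\leq c^{-1}V(x(t))\leq(\kappa/c)e^{-2\gamma t}\|x^0\|^2$, i.e.\ $\|x(t)\|\leq\sqrt{\kappa/c}\,e^{-\gamma t}\|x^0\|$ with $\sqrt{\kappa/c}\geq1$, for every $x^0\in D(A)$; the case of general $x^0\in X$ then follows from the density of $D(A)$ in $X$ together with the continuous dependence of event-triggered solutions on initial data established in Theorem~\ref{thm:continuity_initial_state}. The main obstacle is precisely this last step: in infinite dimensions $\dot V\leq-2\gamma V$ by itself does not force $\|x(t)\|\to0$ (cf.\ the Remark after Lemma~\ref{lem:no_zeno_xt_bound} and \cite{Pazy1972}), so transferring the coercivity condition \eqref{eq:T1_coersive} from $T(t)$ to $T_{BF}(t)$, together with the domain bookkeeping needed to differentiate $V$ and the final density/continuity argument, is the technical heart of the proof.
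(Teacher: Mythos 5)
Your proposal is correct and follows essentially the same route as the paper: the Lyapunov operator $P$ with the equality $2\re\langle Px,(A+BF)x\rangle=-\|x\|^2$, the bound $\dot V\le -(1-2\varepsilon\|PB\|)\|x\|^2\le -2\gamma V$ via the triggering rule, transfer of the coercivity \eqref{eq:T1_coersive} from $T(t)$ to $T_{BF}(t)$ by a variation-of-parameters perturbation argument to get $\langle Px,x\rangle\ge c\|x\|^2$, and the final density/continuity step via Theorem~\ref{thm:continuity_initial_state}. The only cosmetic difference is that you verify the lower bound on $P$ directly from $\int_0^{s_2}\|T_{BF}(\tau)x\|^2d\tau$, whereas the paper invokes Theorem~2 of \cite{Pazy1972} for the same conclusion.
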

\begin{proof}
	There exists $M \geq 1$ such that
	\begin{equation}
	\label{eq:T_Tbf_bound}
	\|T(\tau)\|_{\mathcal{B}(X)}  \leq M,\quad \|T_{BF}(\tau)\|_{\mathcal{B}(X)} 
	\leq M \qquad \forall \tau \in [0,s_1].
	\end{equation}
	Using \eqref{eq:T1_coersive},
	we obtain
	\[
	c_1 \|x\| \leq \|T(s_1)x\| = \|T(s_1 - \tau) T(\tau)x \|
	\leq M\| T(\tau)x  \| \qquad \forall x \in X,~\forall \tau \in [0,s_1].
	\]
	Hence
	\begin{equation}
	\label{eq:T1_coersive_all2}
	\|T(\tau)x\| \geq 
	\frac{c_1}{M} \|x\| \qquad \forall x \in X,~\forall \tau \in [0,s_1].
	\end{equation}
	Since $T_{BF}(\tau)$ satisfies the variation of
	parameters formula  (see Theorem 3.2.1  on p.~110
	in 
	\cite{Curtain1995}):
	\begin{equation}
	\label{eq:T_BF_identity}
	T_{BF}(\tau) x = T(\tau)x  + 
	\int^{\tau}_0 T_{BF}(\tau-s) BF
	T(s) xds\qquad \forall x \in X,
	\end{equation}
	it follows from 
	\eqref{eq:T_Tbf_bound} and \eqref{eq:T1_coersive_all2}
	that
	\begin{align*}
	\|T_{BF}(\tau) x\| 
	&\geq \|T(\tau)x\|  -
	\int^{\tau}_0 \|T_{BF}(\tau-s) BF
	T(s) x\| ds \\
	&\geq \left( \frac{c_1}{M} -
	\tau M^2\|BF\|_{\mathcal{B}(X)}  \right) \|x\|\qquad 
	\forall  x \in X,~\forall \tau \in [0,s_1].
	\end{align*}
	Therefore, 
	\begin{equation}
	\label{eq:Tbf_coercive}
	\|T_{BF}(\tau) x\|  \geq \frac{c_1}{2M} \|x\|
	\qquad \forall x \in X,~\forall \tau \in [0,s_3],
	\end{equation}
	where
	\[
	s_3 := \min
	\left\{
	s_1,~
	\frac{c_1}{2M^3 \|BF\|_{\mathcal{B}(X)} }
	\right\} > 0.
	\]
	
	Since $T_{BF}(t)$ is exponentially stable, it follows
	from Theorem 5.1.3 on p.~217 in \cite{Curtain1995} that
	there exists a positive operator $P \in \mathcal{B}(X)$ such that 
	the following Lyapunov equality holds:
	\begin{equation}
	\label{eq:Lyapunov_eq}
	\langle
	(A+BF)x,Px
	\rangle
	+
	\langle
	Px, (A+BF)x
	\rangle
	=-\|x\|^2
	\qquad \forall x \in D(A),
	\end{equation}
	and such an operator $P$ is given as in \eqref{eq:ep_cond2}.
	Using \eqref{eq:Tbf_coercive},
	we have from Theorem 2 in \cite{Pazy1972} that
	there exist $\alpha,\beta > 0$ such that
	\begin{equation}
	\label{eq:positive_ness}
	\alpha \|x\|^2 \leq \langle Px, x \rangle \leq \beta \|x\|^2 \qquad \forall x \in X.
	\end{equation}
	Since 
	\[
	\langle Px, x \rangle =
	\int^{\infty}_0 
	\|T_{BF}(t) x\|^2  dt
	\leq 
	\int^{\infty}_0 
	\|T_{BF}(t)\|^2  dt \cdot \|x\|^2,
	\]
	we can choose $\beta > 0$ in \eqref{eq:positive_ness} so that 
	\begin{equation*}
	\beta \leq \int^{\infty}_0 
	\|T_{BF}(t)\|^2  dt.
	\end{equation*}
	
	Assume that $x^0 \in D(A)$.
	Then the mild solution $x$ of \eqref{eq:unique_solution} is also a classical solution,
	i.e., $x$ satisfies the differential equation \eqref{eq:event_dynamics} in $X$;
	see. e.g., Theorem~3.1.3 on p.~103 in \cite{Curtain1995}.
	Moreover, if we define
	$e(t) := Fx(t) - F(t_k)$ for $t_k \leq t < t_{k+1}$ and $k \in \mathbb{N}_0$, which is
	the  error 
	induced by the event-triggered implementation,
	then 
	\[
	\|e(t)\|_U \leq \varepsilon \|x(t)\| \qquad \forall t\geq 0
	\]
	under the event-triggering mechanism \eqref{eq:time_seq_present}.
	Therefore,  
	using \eqref{eq:diff_eq_ABF} and \eqref{eq:Lyapunov_eq}, we find that
	for every $t \in (t_k,t_{k+1})$ and every $k \in \mathbb{N}_0$,
	$V(t) := \langle Px(t), x(t) \rangle$ satisfies
	\begin{align*}
	\frac{dV}{dt}(t) 
	&\leq
	-\|x(t)\|^2 + 
	2\|PB\|_{\mathcal{B}(U,X)}  \cdot \|x(t)\| \cdot \|e(t)\|_U \\
	&\leq
	-(1-2\varepsilon\|PB\|_{\mathcal{B}(U,X)} ) \|x(t)\|^2 \\
	&\leq
	-2\gamma
	V(t),
	\end{align*}
	where $\gamma >0$ is defined as in \eqref{eq:bounded_decay_rate2}.
	We see from the positive definiteness \eqref{eq:positive_ness} of $P$ that
	\[
	\alpha \|x(t)\|^2 \leq V(t) \leq e^{-2\gamma t} V(0) \leq 
	\beta e^{-2\gamma t} \|x^0\|^2\qquad \forall t \geq 0,
	\]
	and hence
	\begin{equation}
	\label{eq:x_bound_DA}
	\|x(t)\| \leq \sqrt{\frac{\beta}{\alpha}} e^{-\gamma t} \|x^0\|
	\qquad \forall x^0 \in D(A),~ \forall t \geq 0.
	\end{equation}
	
	Finally, we show the exponential stability for all initial states in $X$.
	Fix $x^0 \in X$ and $t_{\rm e} \geq 0$ arbitrarily, and let $x$ be
	the solution of the  evolution equation 
	\eqref{eq:unique_solution} with the initial state $x^0$.
	Since $D(A)$ is dense in $X$, Theorem~\ref{thm:continuity_initial_state} shows that 
	for every $\delta> 0$,
	there exists $\zeta^0 \in D(A)$ such that 
	the solution $\zeta$ of the evolution equation 
	\eqref{eq:unique_solution} with the initial state $\zeta^0$ satisfies
	\[
	\|x(t) - \zeta(t)\| < \delta \qquad \forall t \in [0,t_{\rm e}].
	\]
	Therefore we have from \eqref{eq:x_bound_DA} that 
	\begin{align*}
	\|x(t)\| \leq \|x(t) - \zeta(t)\| + \|\zeta(t)\|
	< \left( 1 + \sqrt{\frac{\beta}{\alpha}} \right) \delta + 
	\sqrt{\frac{\beta}{\alpha}} e^{-\gamma t} \|x^0\| \qquad \forall t \in [0,t_{\rm e}].
	\end{align*}
	Since $\delta >0$ was arbitrary, we obtain 
	\begin{align*}
	\|x(t)\| \leq
	\sqrt{\frac{\beta}{\alpha}} e^{-\gamma t} \|x^0\| \qquad \forall t \in [0,t_{\rm e}].
	\end{align*}
	Moreover, $t_{\rm e} \geq 0$ was also arbitrary. Thus, the closed-loop
	system \eqref{eq:plant} is exponentially stable and
	its stability margin is at least $\gamma$.
	%
	%
	
\end{proof}

\begin{remark}
	{\em
		If 
		$M \geq 1$ and $\omega > 0$ satisfy
		$\|T_{BF}(t)\|_{\mathcal{B}(X)}  \leq M e^{-\omega t}$ for all $t \geq 0$,
		then we obtain
		$\|P\|_{\mathcal{B}(X)}  \leq M^2/(2\omega).$
		Therefore, 
		\eqref{eq:ep_cond2} and 
		\eqref{eq:bounded_decay_rate2} can be rewritten as
		\begin{align*}
		\varepsilon < \frac{\omega}{M^2 \|B\|_{\mathcal{B}(U,X)} },\qquad
		\gamma \geq \frac{\omega  - \varepsilon M^2\|B\|_{\mathcal{B}(U,X)}  }{M^2}.
		\end{align*}
	}
\end{remark}

\section{Stability analysis under unbounded control}
Throughout this section, we study the case 
$B \in \mathcal{B}(U,X_{-1})$.
We analyze the exponential stability of the closed-loop system under
two event-triggering mechanisms.
The first mechanism is based on system decomposition,
and the second one employs a periodic event-triggering condition developed in 
\cite{Heemels2013, Heemels2013_Automatica}.
\subsection{Event-triggered control based on system decomposition}
\label{subsec:decomp}
\subsubsection{System decomposition}
In what follows, we 
shall place a number of assumptions on the infinite-dimensional system
\eqref{eq:plant} and
recall the decomposition
of infinite-dimensional systems under unbounded control used in 
\cite{Logemann2003, Logemann2005, Logemann2013}.

\begin{assumption}
	\label{assump:finite_multiplicities}
	There exists $\alpha < 0$ such that 
	$\sigma (A) \cap \overline{\mathbb{C}}_{\alpha}$ consists of finitely many
	eigenvalues of $A$ with finite algebraic multiplicities.
\end{assumption}
If Assumption~\ref{assump:finite_multiplicities} holds, then
we can decompose $X$ by a standard technique (see, e.g., Lemma 2.5.7 on p.~71 in \cite{Curtain1995}
or Proposition~IV.1.16 on p.~245 in \cite{Engel2000})
as follows.
There exists a rectifiable, closed, simple curve $\Gamma$ in $\mathbb{C}$ enclosing an open set that 
contains 
$\sigma(A) \cap \overline{\mathbb{C}}_\alpha$ in its interior and
$\sigma(A) \cap (\mathbb{C} \setminus \overline{\mathbb{C}}_\alpha)$ 
in its exterior.
The operator $\Pi: X \to X$, defined by
\begin{equation}
\label{eq:Pi_def}
\Pi := \frac{1}{2\pi i} \int_{\Gamma} (sI-A)^{-1}ds,
\end{equation}
where $\Gamma$ is traversed once in the counterclockwise direction, 
is a projection operator, and 
we can decompose $X$ to be
\begin{equation}
\label{eq:X_decomposition}
X = X^+ \oplus X^-,
\text{~~where~~$X^+ := \Pi X$~~and~~$X^- := (I-\Pi) X$. }
\end{equation}
This decomposition satisfies
$\dim X^+ < \infty$ and $X^+ \subset D(A)$. Moreover,
$X^+$ and $X^-$ are $T(t)$-invariant for all $t \geq 0$.
Define 
\begin{equation}
\label{eq:Aj_def}
A^+ := A|_{X^+}, \quad
A^- := A|_{X^-},\qquad 
T^+(t) := T(t)|_{X^+},\quad 
T^-(t) := T(t)|_{X^-}.
\end{equation}
Then 
$
\sigma(A^+) = \sigma(A) \cap \overline{\mathbb{C}}_\alpha
$ and 
$\sigma(A^-) = \sigma(A) \cap (\mathbb{C} \setminus \overline{\mathbb{C}}_\alpha).
$
Note that $A^+$ and $A^-$ generate the strongly continuous semigroups $T^+(t)$ on $X^+$ and
$T^-(t)$ on $X^-$, respectively.
Let $(X^-)_{-1}$ denote the extrapolation space associated with $T^{-}(t)$.
The semigroup $T^-(t)$ can be extended to a strongly continuous semigroup
on $(X^-)_{-1}$, and 
its generator on $(X^-)_{-1}$ is an extension of $A^-$ on $X^-$.
The same symbols $T^-(t)$ and $A^-$ will be used to denote these extensions.
Since the spectrum of the operator $A$ on $X$ is equal to 
the spectrum of the operator $A$ on $X_{-1}$,
the projection operator $\Pi$ on $X$ defined by \eqref{eq:Pi_def}
can be extended to a projection $\Pi_{-1}$ on $X_{-1}$.
If $\lambda \in \varrho(A)$ and if $\lambda I- A$ is considered as
an  operator in $\mathcal{B}(X,X_{-1})$, then $\Pi_{-1}$ is similar to $\Pi$, i.e.,
\[
\Pi_{-1} = (\lambda I - A) \Pi (\lambda I - A)^{-1}
\]
and satisfies $\Pi_{-1}X_{-1} = \Pi X = X^+$.
Using the extended projection operator $\Pi_{-1}$, we can decompose
the control operator $B \in \mathcal{B}(U,X_{-1})$:
\[
B^+ := \Pi_{-1} B,\quad B^- := (I - \Pi_{-1})B.
\]
Since $(X^-)_{-1}$ and $(X_{-1})^- := (I - \Pi_{-1})X_{-1}$ are both completions of $X^-$ 
endowed with the norm $\|\cdot \|_{-1}$, 
we can identify $(X^-)_{-1}$ and $(X_{-1})^-$ (see, e.g., the footnote 3 of p. 1213 in 
\cite{Logemann2013}).
We also decompose the feedback operator $F \in \mathcal{B}(X,U)$:
\[
F^+ := F|_{X^+},\quad F^- := F|_{X^-}.
\]

In addition to Assumption~\ref{assump:finite_multiplicities},
we impose the following assumptions:
\begin{assumption}
	\label{assump:exponential_stability_T-}
	The exponential growth bound $\omega(T^-)$ satisfies $\omega(T^-) < 0$.
\end{assumption}
\begin{assumption}
	\label{assump:controllability}
	The pair $(A^+, B^+)$ is controllable.
\end{assumption}

\begin{remark}
	\label{rem:A_analytic}
	{\em
		It is shown in \cite{Logemann2003} that
		if $A$ generates an analytic semigroup and if there exists
		a compact operator $F \in \mathcal{B}(X,U)$ such that 
		the semigroup generated by $A_{BF}$
		is exponentially stable, then
		Assumptions~\ref{assump:finite_multiplicities}--\ref{assump:controllability} hold,
		where the operator $A_{BF}: D(A_{BF}) \subset X \to X$ by
		\begin{equation}
		\label{eq:A_BF_def}
		A_{BF} x = (A+BF)x
		~~\text{with domain}~~
		D(A_{BF}) := \{
		x \in X : (A+BF)x \in X\}.
		\end{equation}
	}
\end{remark}

%
%

\subsubsection{Stability analysis}
For every $x \in X$,
define $x^+ := \Pi x$ and $x^- := (I-\Pi) x$.
For convenience, we use the notation 
$x^+(t)$ and $x^-(t)$ instead of 
$x(t)^+$ and $x(t)^-$, respectively.
Assume that the
feedback operator $F \in \mathcal{B}(X,U)$ satisfies
$
F^- = F|_{X^-} = 0.
$
Then the control input in \eqref{eq:input} 
is given by
\begin{equation}
\label{eq:input_unstable}
u(t) = F x(t_k) = 
F^+ x^+(t_k),\quad t_k\leq t <  t_{k+1},~k \in \mathbb{N}_0.
\end{equation}
In this subsection, we choose the time sequence $\{t_k\}_{k\in \mathbb{N}_0}$ so that 
the finite-dimensional system 
\begin{subequations}
	\label{eq:finite_state_equation}
	\begin{align}
	x^+(0) &= \Pi x^0 \in X^+ \\
	\dot x^+ (t) &= A^+x^+(t) +B^+F^+ x^+(t_k)\qquad \forall t \in(t_k, t_{k+1}),~\forall k \in \mathbb{N}_0.
	\end{align}
\end{subequations}
is exponentially stable
and its stability margin is at least $\gamma^+ > 0$.
For example, as in Theorem~\ref{thm:bounded2}, we can define
the time sequence $\{t_k\}_{k\in \mathbb{N}_0}$ by
\begin{subequations}
	\label{eq:event_trigger_present_finite}
	\begin{align}
	t_0 &:= 0,\quad
	\psi_{k+1} :=\inf \big\{t > t_{k}:\|F^+x^+(t) - F^+x^+(t_{k})\|_U > \varepsilon \|x^+(t)\|
	\big\}\\
	t_{k+1} &:= 
	\min\{ t_k + \tau_{\max},~
	\psi_{k+1}\}
	\qquad \forall k \in \mathbb{N}_0.
	\end{align}
\end{subequations}

Since $X^+$ is finite dimensional,
we obtain less conservative conditions on 
the threshold $\varepsilon$
for the closed-loop system \eqref{eq:finite_state_equation} to be exponentially stable.
In particular, we obtain the following result on the event-triggering mechanism
\eqref{eq:event_trigger_present_finite}:
\begin{proposition}
	\label{prop:finite_event_trigger_cond}
	Consider the finite-dimensional system \eqref{eq:finite_state_equation}
	and the event-triggering mechanism 
	\eqref{eq:event_trigger_present_finite}.
	Assume that the input space $U$ is finite dimensional and choose $\gamma^+ > 0$.
	If there exist positive definite matrices $P$, $Q$ and
	a positive scalar $\kappa$ such that the following linear 
	matrix inequalities are feasible:
	\begin{subequations}
		\begin{gather}
		\label{eq:LMI1}
		\begin{bmatrix}
		Q - \varepsilon^2 \kappa I & -PB^+ \\
		-(B^+)^*P& \kappa I
		\end{bmatrix}
		\succeq 0 \\
		(A^++B^+F^+)^{*} P + P(A^++B^+F^+) \preceq -(\gamma^+)^2 P - Q,
		\label{eq:LMI2}
		\end{gather}
	\end{subequations}
	then, for all $\tau_{\max}>0$,  the finite-dimensional system \eqref{eq:finite_state_equation}
	is exponentially stable and its stability margin is at least 
	$\gamma^+$.
\end{proposition}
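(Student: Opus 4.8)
The plan is to run the Lyapunov argument of Theorem~\ref{thm:bounded2} in the finite-dimensional setting, trading the operator-norm smallness condition \eqref{eq:ep_cond2} for the linear matrix inequalities \eqref{eq:LMI1}--\eqref{eq:LMI2}. Since $\dim X^+<\infty$ and $\dim U<\infty$, fixing bases turns $A^+$, $B^+$, $F^+$, $P$, $Q$ into matrices and makes \eqref{eq:finite_state_equation} an ordinary differential equation whose solution $x^+$ is continuous on $\mathbb{R}_+$ and continuously differentiable on each $(t_k,t_{k+1})$. First I would introduce the implementation error $e(t):=F^+x^+(t)-F^+x^+(t_k)$ for $t\in[t_k,t_{k+1})$: exactly as in the proofs of Theorems~\ref{thm:bounded1} and \ref{thm:bounded2}, \eqref{eq:finite_state_equation} becomes $\dot x^+(t)=(A^++B^+F^+)x^+(t)-B^+e(t)$ on each $(t_k,t_{k+1})$, and since $t_{k+1}\le\psi_{k+1}$ the triggering rule \eqref{eq:event_trigger_present_finite} forces $\|e(t)\|_U\le\varepsilon\|x^+(t)\|$ for every $t\ge0$. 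For a linear finite-dimensional system this rule cannot produce Zeno behavior (by scaling, the inter-event time depends only on the direction of $x^+(t_k)$, hence is bounded below by its minimum over the unit sphere), so $t_k\to\infty$ and it suffices to establish a decay rate on each inter-event interval.

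Next I would take $V(t):=\langle Px^+(t),x^+(t)\rangle$, which obeys $\lambda_{\min}(P)\|x^+(t)\|^2\le V(t)\le\lambda_{\max}(P)\|x^+(t)\|^2$, and differentiate along the rewritten dynamics: for $t\in(t_k,t_{k+1})$,
\[
\dot V(t)=\big\langle\big[(A^++B^+F^+)^{*}P+P(A^++B^+F^+)\big]x^+(t),x^+(t)\big\rangle-2\re\langle PB^+e(t),x^+(t)\rangle.
\]
Inequality \eqref{eq:LMI2} bounds the first term by $-2\gamma^+V(t)-\langle Qx^+(t),x^+(t)\rangle$. For the cross term I would use the $\kappa$-weighted S-procedure built into \eqref{eq:LMI1}: expanding the block quadratic form in \eqref{eq:LMI1} at the pair $(x^+(t),-e(t))$ gives
\[
\langle Qx^+(t),x^+(t)\rangle+2\re\langle PB^+e(t),x^+(t)\rangle\geq\kappa\big(\varepsilon^2\|x^+(t)\|^2-\|e(t)\|_U^2\big)\geq 0,
\]
the last step being the triggering bound $\|e(t)\|_U\le\varepsilon\|x^+(t)\|$. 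Adding the two estimates, the term $\langle Qx^+,x^+\rangle$ cancels the cross term, leaving $\dot V(t)\le-2\gamma^+V(t)$ on every $(t_k,t_{k+1})$.

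Finally, since $V$ is continuous on $\mathbb{R}_+$ and $t_k\to\infty$, this differential inequality integrates across all the intervals to $V(t)\le e^{-2\gamma^+t}V(0)$ for all $t\ge0$, and the two-sided bound on $V$ then gives $\|x^+(t)\|\le\sqrt{\lambda_{\max}(P)/\lambda_{\min}(P)}\,e^{-\gamma^+t}\|x^+(0)\|$, i.e.\ exponential stability with stability margin at least $\gamma^+$; the value of $\tau_{\max}>0$ is irrelevant because the per-interval decay rate is uniform in the interval length. I expect no real obstacle, since this is just the finite-dimensional, LMI-based specialization of Theorem~\ref{thm:bounded2}: no analogue of \eqref{eq:T1_coersive} and no density argument are needed, because $x^+$ is a genuine classical solution on the finite-dimensional space $X^+$ and $T^+(t)=e^{A^+t}$ is automatically a group. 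The only points requiring care are the $\re\langle\cdot,\cdot\rangle$ bookkeeping over the complex space $X^+$ and the verification that \eqref{eq:LMI1} is exactly the S-procedure certificate that absorbs the cross term whenever $\|e\|_U\le\varepsilon\|x^+\|$.
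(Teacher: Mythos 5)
Your argument is exactly the one the paper has in mind: the paper omits the proof entirely, deferring to Theorem~III.3 of \cite{Donkers2012_EV}, and that theorem is proved by precisely your Lyapunov/S-procedure scheme (rewrite the dynamics as $\dot x^+=(A^++B^+F^+)x^+-B^+e$ with $e(t)=F^+x^+(t)-F^+x^+(t_k)$, use the triggering rule to get $\|e(t)\|_U\le\varepsilon\|x^+(t)\|$, and evaluate the block quadratic form of \eqref{eq:LMI1} at $(x^+,-e)$ so that $\langle Qx^+,x^+\rangle$ absorbs the cross term). The mechanics of your computation, including the $\re\langle\cdot,\cdot\rangle$ bookkeeping over the complex space, are correct, and you are right that no analogue of \eqref{eq:T1_coersive} or density argument is needed in finite dimensions.

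Two points deserve attention. First, and substantively: you assert that \eqref{eq:LMI2} bounds the drift term by $-2\gamma^+V(t)-\langle Qx^+(t),x^+(t)\rangle$, but \eqref{eq:LMI2} as printed has $-(\gamma^+)^2P$, not $-2\gamma^+P$. With the LMI exactly as stated your argument yields $\dot V\le-(\gamma^+)^2V$, hence $\|x^+(t)\|\le\sqrt{\lambda_{\max}(P)/\lambda_{\min}(P)}\,e^{-(\gamma^+)^2t/2}\|x^+(0)\|$, i.e.\ a stability margin of $(\gamma^+)^2/2$, which equals the claimed margin $\gamma^+$ only when $\gamma^+=2$. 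You have silently replaced $(\gamma^+)^2$ by $2\gamma^+$; this is almost certainly the intended (and correct) reading of the proposition, but the substitution should be made explicit rather than passed over, since as written either the LMI or the claimed margin must be adjusted. Second, a minor point: your Zeno-freeness argument via ``the inter-event time depends only on the direction of $x^+(t_k)$, hence is bounded below by its minimum over the unit sphere'' is informal, because the first-event-time map need not be continuous in the initial direction, so the infimum over the sphere is not obviously attained or positive by compactness alone. The standard repair is Tabuada's estimate bounding the derivative of $\|e(t)\|_U/\|x^+(t)\|$, which gives a uniform positive lower bound on $t_{k+1}-t_k$ for the relative triggering rule in finite dimensions; alternatively, for the stability conclusion itself one only needs $\sup_k t_k=\infty$, which also follows from that estimate.
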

\begin{proof}
	We can 
	prove Proposition~\ref{prop:finite_event_trigger_cond} similarly to Theorem  III.3 in \cite{Donkers2012_EV}. 
	Therefore,
	the proof is omitted.
\end{proof}

\begin{theorem}
	\label{thm:unbounded}
	Suppose that Assumptions~\ref{assump:finite_multiplicities}--\ref{assump:controllability} hold.
	Let $F^{-} = 0$ and
	assume the feedback gain $F^+$
	and the time sequence $\{t_k\}_{k\in \mathbb{N}_0}$ are chosen so that 
	the finite-dimensional system 
	\eqref{eq:finite_state_equation}
	is exponentially stable,
	its stability margin is at least $\gamma^+ > 0$, and
	there exist $\tau_{\max},\tau_{\min} >0$ such that 
	$\tau_{\min} \leq t_{k+1} - t_k \leq \tau_{\max}$ for all $x^+(0) \in X^+$ 
	and all $k \in \mathbb{N}_0$.
	Then the infinite-dimensional system \eqref{eq:plant} 
	is exponentially stable and its stability margin is at least 
	$\min\{\gamma^+, 
	-\omega(T^-)\}$.
\end{theorem}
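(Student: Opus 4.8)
The plan is to split the state along the decomposition $X=X^+\oplus X^-$ recalled above and treat the two parts in turn. Write $x^+(t):=\Pi x(t)\in X^+$ and $x^-(t):=(I-\Pi)x(t)\in X^-$. Since $F^-=0$, the control input is $u(t)=Fx(t_k)=F^+x^+(t_k)$ as in \eqref{eq:input_unstable}; applying the extended projection $\Pi_{-1}$ to the $X_{-1}$-valued equation \eqref{eq:event_dynamics} and using $\Pi_{-1}B=B^+$ shows that $x^+$ solves exactly \eqref{eq:finite_state_equation}. Crucially this $X^+$-system is \emph{closed} (it does not involve $x^-$), so $x^+$ coincides with the solution of the standalone finite-dimensional system \eqref{eq:finite_state_equation}, which is assumed to be exponentially stable with stability margin at least $\gamma^+$. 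Hence there is a constant $M^+\ge 1$, independent of $x^0$, with $\|x^+(t)\|\le M^+e^{-\gamma^+t}\|x^0\|$ for all $t\ge 0$. Everything then reduces to showing that $x^-$, which is forced by the now known-to-decay signal $u$, also decays at rate $\min\{\gamma^+,-\omega(T^-)\}$.

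For the residual part, applying $I-\Pi_{-1}$ to \eqref{eq:event_dynamics} gives, in $(X^-)_{-1}$, the equation $\dot x^-(t)=A^-x^-(t)+B^-F^+x^+(t_k)$ on each interval $(t_k,t_{k+1})$. Because $B^-\in\mathcal{B}(U,(X^-)_{-1})$ and $T^-(t)$ is the semigroup generated by $A^-$ on $X^-$, Lemma~\ref{lem:Stau} applied to the pair $(A^-,B^-)$ shows that $S_\tau^-:=\int_0^\tau T^-(s)B^-\,ds$ lies in $\mathcal{B}(U,X^-)$ with $\bar S:=\sup_{0\le\tau\le\tau_{\max}}\|S_\tau^-\|_{\mathcal{B}(U,X^-)}<\infty$. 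Consequently the variation of parameters formula holds in $X^-$, namely $x^-(t_k+\tau)=T^-(\tau)x^-(t_k)+S_\tau^-F^+x^+(t_k)$ for $\tau\in(0,t_{k+1}-t_k]$, and iterating it over the intervals $[t_j,t_{j+1}]$ with $\tau_j:=t_{j+1}-t_j\in[\tau_{\min},\tau_{\max}]$ yields, for $t\in[t_N,t_{N+1})$,
\[
x^-(t)=T^-(t)x^-(0)+\sum_{j=0}^{N-1}T^-(t-t_{j+1})\,S_{\tau_j}^-F^+x^+(t_j)+S_{t-t_N}^-F^+x^+(t_N).
\]

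To estimate this, set $\gamma:=\min\{\gamma^+,-\omega(T^-)\}>0$ (positive by Assumption~\ref{assump:exponential_stability_T-} and $\gamma^+>0$), fix an arbitrary $r\in(0,\gamma)$, and choose $\rho\in(r,\gamma)$. Then $\|x^+(t)\|\le M^+e^{-\rho t}\|x^0\|$, and since $\rho<-\omega(T^-)$ there is $M^-\ge 1$ with $\|T^-(t)\|_{\mathcal{B}(X^-)}\le M^-e^{-\rho t}$ for all $t\ge 0$. Bounding each term above, using $\|S_\tau^-\|\le\bar S$, $\|F^+\|\le\|F\|$, the identity $e^{-\rho(t-t_{j+1})}e^{-\rho t_j}=e^{-\rho t}e^{\rho\tau_j}\le e^{-\rho t}e^{\rho\tau_{\max}}$ for the summands, and $e^{-\rho t_N}\le e^{\rho\tau_{\max}}e^{-\rho t}$ for the last term, every summand is dominated by $C_1e^{-\rho t}\|x^0\|$ for a fixed constant $C_1$; since $t_N\ge N\tau_{\min}$ we have $N\le t/\tau_{\min}$, so
\[
\|x^-(t)\|\le\Big(C_0+\tfrac{C_1}{\tau_{\min}}\,t\Big)e^{-\rho t}\|x^0\|\qquad\forall x^0\in X,~\forall t\ge 0.
\]
Because $\big(C_0+\tfrac{C_1}{\tau_{\min}}t\big)e^{-(\rho-r)t}$ is bounded on $[0,\infty)$, this gives $\|x^-(t)\|\le C_re^{-rt}\|x^0\|$; together with $\|x^+(t)\|\le M^+e^{-rt}\|x^0\|$ and $\|x(t)\|\le\|x^+(t)\|+\|x^-(t)\|$ we obtain exponential decay at rate $r$ with a constant uniform in $x^0$. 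As $r\in(0,\gamma)$ was arbitrary, the stability margin of \eqref{eq:plant} is at least $\gamma=\min\{\gamma^+,-\omega(T^-)\}$.

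The argument is essentially a decomposition-plus-convolution estimate, and the main technical care is localized. The crux is that the convolution representation above must genuinely produce elements of $X^-$ rather than merely of $(X^-)_{-1}$; this is exactly what the uniform boundedness of $S_\tau^-$ from Lemma~\ref{lem:Stau} delivers, and it is the only place where the unboundedness of $B^-$ really enters. Next, the bookkeeping of the exponentials in the sum uses \emph{both} bounds on the inter-event times: $\tau_{\max}$ to transport $e^{-\rho t_j}$ past the factor $e^{-\rho(t-t_{j+1})}$, and $\tau_{\min}$ to bound the number of switches $N$ by $t/\tau_{\min}$. Finally, the linear-in-$t$ factor is genuinely unavoidable in the borderline case $\gamma^+=-\omega(T^-)$, which is why the conclusion is stated as "stability margin at least" the minimum and is reached via the auxiliary rate $r<\rho<\gamma$ rather than with an explicit multiplicative constant.
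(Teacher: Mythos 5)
Your proof is correct and follows essentially the same route as the paper: decompose along $X=X^+\oplus X^-$, use the assumed decay of the closed finite-dimensional subsystem, represent $x^-$ by the iterated variation-of-parameters sum with $S_\tau^-\in\mathcal{B}(U,X^-)$ from Lemma~\ref{lem:Stau}, bound the number of summands by $t/\tau_{\min}$, and absorb the resulting linear-in-$t$ factor by retreating to an arbitrary rate $r<\min\{\gamma^+,-\omega(T^-)\}$. The only cosmetic imprecision is asserting the bound $\|x^+(t)\|\le M^+e^{-\gamma^+t}\|x^0\|$ at the exact rate $\gamma^+$ (the margin is a supremum and need not be attained), but this is harmless since your subsequent estimates only use rates $\rho<\gamma^+$, exactly as the paper does with its auxiliary $\gamma_0$.
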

\begin{proof}
	Setting $S_\tau^+ := \int^\tau_0 T^+(s)B^+ ds$ and 
	$S_\tau^- := \int^\tau_0 T^-(s)B^- ds$, 
	we have from \eqref{eq:unique_solution} that 
	for every $\tau \in(0, t_{k+1}-t_k]$ and every $k \in \mathbb{N}_0$,
	\begin{align}
	\label{eq:x+x-_equation}
	\begin{bmatrix}
	x^+(t_k+\tau) \\ x^-(t_k+\tau)
	\end{bmatrix} &= 
	\begin{bmatrix}
	T^+(\tau)  & 0 \\ 0 & T^-(\tau)
	\end{bmatrix}
	\begin{bmatrix}
	x^+(t_k) \\ x^-(t_k)
	\end{bmatrix} 
	+ 
	\begin{bmatrix}
	S_\tau^+ \\ S_\tau^-
	\end{bmatrix} 
	\begin{bmatrix}
	F^+ & 0
	\end{bmatrix} 	
	\begin{bmatrix}
	x^+(t_k) \\ x^-(t_k)
	\end{bmatrix}.
	\end{align}
	Choose $\gamma_0 \in (0,\min\{\gamma^+, 
	-\omega(T^-)\} )$ arbitrarily.
	The above $x^+$ is the unique solution of \eqref{eq:finite_state_equation}, and
	hence there exists $\Gamma_1 \geq 1$ such that 
	\begin{align}
	\label{eq:x_plus_bound}
	\|x^+(t)\| \leq \Gamma_1 e^{-\gamma_0 t} \|x^+(0)\|\qquad 
	\forall t \geq 0.
	\end{align}

	We will show that 
	for every $\gamma \in (0,\gamma_0)$,
	there exist $\Gamma_2,\Gamma_3 >0$ such that
	for every $\tau \in(0, t_{k+1}-t_k]$ and $k \in \mathbb{N}_0$, 
	\begin{align}
	\label{eq:x_minus_bound}
	\|x^-(t_k + \tau)\| 
	&\leq
	\Gamma_2 e^{-\gamma(t_k + \tau)} \|x^-(0)\| 
	+ \Gamma_3 e^{-\gamma(t_k+\tau)}	\|x^+(0)\|.
	\end{align}
	It follows from \eqref{eq:x+x-_equation} that
	for every $\tau \in(0, t_{k+1}-t_k]$ and every $k \in \mathbb{N}_0$,
	\begin{align*}
	x^-(t_k+\tau) &=
	T^-(t_k+\tau) x^-(0) + 
	S_{\tau}^-F^+x^+(t_k) \\
	&\quad + 
	\sum_{\ell=1}^k T^{-} (t_k+\tau-t_\ell) S^-_{t_\ell-t_{\ell-1}}F^+ x^+(t_{\ell-1}).
	\end{align*}
	Since 
	$B^- \in {\mathcal{B}(U,(X_{-1})^-)}$ and 
	since $T^-(t)$ is a strongly continuous semigroup on $(X^-)_{-1} = (X_{-1})^-$,
	it follows from Lemma~\ref{lem:Stau} that $S^-_{\tau} \in{\mathcal{B}(U,X^-)} $
	for every $\tau \geq 0$ and that there exists $L \geq 0$ such that
	\begin{equation}
	\label{eq:SmFp_bound}
	\sup_{0\leq \tau \leq \tau_{\max}}
	\left\|
	S^-_{\tau} F^+
	\right\|_{\mathcal{B}(X^+,X^-)} 
	\leq L.
	\end{equation}
	Therefore, for all 
	$\tau \in(0, t_{k+1}-t_k]$ and all $k \in \mathbb{N}_0$,
	\begin{align}
	\|x^-(t_k + \tau)\| &\leq
	\|T^-(t_k+\tau)\|_{\mathcal{B}(X^-)} \cdot \|x^-(0)\| + 
	L  \|x^+(t_k)\|  \notag \\
	&\qquad+
	\sum_{\ell=1}^k
	\|T^-(t_k+\tau - t_\ell)\|_{\mathcal{B}(X^-)}  \cdot 
	L  \|x^+(t_{\ell-1})\|.
	\label{eq:x_tk_tau_norm}
	\end{align}
	By the exponential stability of $T^-(t)$,
	there exists $M_1 \geq 1$ such that 
	\begin{equation}
	\label{eq:T_minum_bound}
	\|T^-(t)\|_{\mathcal{B}(X^-)} \leq M_1 e^{-\gamma_0 t}
	\qquad \forall t \geq 0.
	\end{equation}
	From \eqref{eq:x_plus_bound}, \eqref{eq:x_tk_tau_norm}, and \eqref{eq:T_minum_bound},
	for all $\tau \in(0, t_{k+1}-t_k]$ and all $k \in \mathbb{N}_0$,
	\begin{align*}
	\|x^-(t_k + \tau)\| &\leq
	M_1 e^{-\gamma_0 (t_k + \tau)} \|x^-(0)\| 
	+
	L\Gamma_1 e^{-\gamma_0 t_k} \|x^+(0)\| \\
	&\qquad +
	\sum_{\ell=1}^k
	M_1 e^{-\gamma_0 (t_k + \tau - t_\ell)} \cdot 
	L\Gamma_1 e^{-\gamma_0 t_{\ell-1}}\|x^+(0)\|  \\
	&\leq
	M_1 e^{-\gamma_0 (t_k + \tau)} \|x^-(0)\| 
	+
	(k+1) L\Gamma_1M_1 e^{-\gamma_0 (t_k+\tau - \tau_{\max})}\|x^+(0)\|.
	\end{align*}
	Note that
	$k\in \mathbb{N}_0$ is  
	the number of the control updates during $[0,t_k+\tau)$ 
	for all $\tau \in (0,t_{k+1}-t_k]$ and hence satisfies
	\begin{equation}
	\label{eq:event_number_cond}
	k \leq \frac{t_k+\tau}{\tau_{\min}} \qquad
	\forall \tau \in (0,t_{k+1} - t_k].
	\end{equation}
	We also have that, for every $\gamma \in (0, \gamma_0)$, there exists
	$M_2 \geq 1$ such that 
	\[
	\left(
	\frac{t}{\tau_{\min}} + 1
	\right) e^{-\gamma_0 t} \leq M_2 e^{-\gamma t}\qquad \forall t \geq 0.
	\]
	Then for every $\tau \in(0, t_{k+1}-t_k]$ and $k \in \mathbb{N}_0$, 
	\eqref{eq:x_minus_bound} holds with
	$\Gamma_2 := M_1$ and 
	$\Gamma_3 := L\Gamma_1 M_1M_2 e^{\gamma_0\tau_{\max}}$.

	Since
	\[
	\|x^+(0)\| \leq \|\Pi\|_{\mathcal{B}(X)}  \cdot \|x^0\|,\quad
	\|x^-(0)\| \leq \big(1+\|\Pi\|_{\mathcal{B}(X)} \big) \cdot \|x^0\|\qquad \forall x^0 \in X,
	\]
	it follows from \eqref{eq:x_plus_bound} and 
	\eqref{eq:x_minus_bound}
	that 
	\begin{align*}
	\|x^+(t)\| \leq M^+ e^{-\gamma t} \|x^0\|,\quad
	\|x^-(t)\| \leq M^- e^{-\gamma t} \|x^0\|
	\qquad \forall x^0 \in X,~\forall t \geq 0,
	\end{align*}
	where
	$M^+ := \Gamma_1 \|\Pi\|_{\mathcal{B}(X)}$ and 
	$M^- := \Gamma_2 + (\Gamma_2 +\Gamma_3) \|\Pi\|_{\mathcal{B}(X)}.$
	Thus we obtain
	\begin{align*}
	\|x(t)\| &=
	\|x^+(t) + x^-(t)\| 
	\leq 
	\|x^+(t)\| + \|x^-(t)\| 
	\leq 
	(M^+ + M^-) e^{-\gamma t} \|x^0\|
	\end{align*}
	for all $x^0 \in X$ and all $t \geq 0$.
	Thus, 
	the infinite-dimensional system \eqref{eq:plant}
	is exponentially stable.
	Since the constants
	$\gamma_0 \in (0,\min\{\gamma^+, 
	-\omega(T^-)\} )$ and $\gamma \in (0,\gamma_0)$ were arbitrary,
	the stability margin
	is at least
	$\min\{\gamma^+, 
	-\omega(T^-)\}$.
\end{proof}

\subsection{Periodic event-triggered control}
In Theorem~\ref{thm:unbounded}, the feedback operator $F \in \mathcal{B}(X,U)$
has a specific structure $F^- = F|_{X^-} = 0$. In contrast, we here assume 
that $A$ generates
an analytic semigroup, and use a periodic event-triggering condition 
proposed in
\cite{Heemels2013, Heemels2013_Automatica}. Then we see that
for every compact feedback operator $F \in \mathcal{B}(U,X)$ for which
the semigroup generated by $A_{BF}$ in \eqref{eq:A_BF_def}
is exponentially stable,
there exists a periodic event-triggering condition
such that the closed-loop system \eqref{eq:plant} is exponentially stable.

Fixing $h>0$, $\varepsilon >0$, and $\ell_{\max} \in \mathbb{N}$, 
we define the time sequence $\{t_k\}_{k\in \mathbb{N}_0}$ by
\begin{subequations}
	\label{eq:PET}
	\begin{align}
	t_0 &:= 0,\quad 
	\psi_k := \min \big\{ \ell h > t_k:
	\|Fx(\ell h) - Fx(t_k)\|_U > \varepsilon \|x(t_k)\|,~\ell \in \mathbb{N}
	\big\} \\
	t_{k+1} &:= 
	\min\{ t_k + \ell_{\max}h,~\psi_k
	\}
	\qquad \forall k \in \mathbb{N}_0,
	\end{align}
\end{subequations}
which is a class of periodic event-triggering mechanisms
\cite{Heemels2013, Heemels2013_Automatica}.
Whereas the event-triggering mechanisms \eqref{eq:time_seq} 
and \eqref{eq:time_seq_present} require monitoring of
the conditions continuously, 
the periodic event-triggering mechanism \eqref{eq:PET}
verifies the conditions only periodically.


The result of this section is based on the following theorem on the 
exponential stability of periodic sampled-data systems:
\begin{theorem}[Theorem~4.8 in \cite{Logemann2003}]
	\label{thm:sampled_data}
	Assume that $A$ generates an analytic semigroup $T(t)$ on X,
	$B \in \mathcal{B}(U,X_{-1})$, and $F \in \mathcal{B}(X,U)$ is compact. If
	the semigroup generated by $A_{BF}$ in \eqref{eq:A_BF_def}
	is exponentially stable, then
	there exists $h^* >0$ such that for every $h \in (0,h^*)$,
	the periodic sampled-data system \eqref{eq:plant}  with $t_{k+1} - t_k = h$, 
	$k \in \mathbb{N}_0$, is exponentially stable.
\end{theorem}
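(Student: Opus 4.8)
The plan is to reduce exponential stability of the periodic sampled-data closed loop to a power-stability statement for a single bounded operator, and then to analyse that operator through the decomposition of Section~\ref{subsec:decomp}. Fix $h>0$ and write $t_k=kh$. By \eqref{eq:unique_solution} the state at the sampling instants satisfies $x((k+1)h)=\Delta_h x(kh)$, where the \emph{monodromy operator} $\Delta_h:=T(h)+S_hF$ lies in $\mathcal{B}(X)$ by Lemma~\ref{lem:Stau}, while on a sampling interval $x(kh+\tau)=(T(\tau)+S_\tau F)x(kh)$ with $\sup_{0\le\tau\le h}\|T(\tau)+S_\tau F\|_{\mathcal{B}(X)}<\infty$ (again Lemma~\ref{lem:Stau}). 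Hence the sampled-data system is exponentially stable if and only if $\|\Delta_h^{\,k}\|_{\mathcal{B}(X)}$ decays geometrically in $k$, i.e.\ if and only if the spectral radius $r(\Delta_h)<1$. The whole problem is thus to prove $r(\Delta_h)<1$ for all sufficiently small $h>0$.

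To locate $\sigma(\Delta_h)$ I would use the spectral decomposition. Since $T$ is analytic and the semigroup generated by $A_{BF}$ is exponentially stable, Remark~\ref{rem:A_analytic} gives Assumptions~\ref{assump:finite_multiplicities}--\ref{assump:controllability}, so $X=X^+\oplus X^-$ with $\dim X^+<\infty$, $\omega(T^-)<0$, and $(A^+,B^+)$ controllable. Writing $S_h^\pm:=\int_0^hT^\pm(s)B^\pm\,ds$ as in the proof of Theorem~\ref{thm:unbounded}, one has $T(h)=\mathrm{diag}(T^+(h),T^-(h))$, $S_h=(S_h^+,S_h^-)^{\!\top}$, and
\[
\Delta_h-\mathrm{diag}\!\big(0,\,T^-(h)\big)=\begin{bmatrix}T^+(h)+S_h^+F^+ & S_h^+F^- \\[2pt] S_h^-F^+ & S_h^-F^-\end{bmatrix}.
\]
Each entry of the right-hand matrix is compact: the first acts on the finite-dimensional space $X^+$, the next two have domain or range equal to $X^+$ and so are finite-rank, and $S_h^-F^-$ is compact because $F^-=F|_{X^-}$ is compact and $S_h^-\in\mathcal{B}(U,X^-)$ by Lemma~\ref{lem:Stau}. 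Hence $\Delta_h$ is a compact perturbation of $\mathrm{diag}(0,T^-(h))$, so $\sigma_{\mathrm{ess}}(\Delta_h)=\{0\}\cup\sigma_{\mathrm{ess}}(T^-(h))$ and, since $\omega(T^-)<0$,
\[
r_{\mathrm{ess}}(\Delta_h)=r_{\mathrm{ess}}(T^-(h))\le e^{h\,\omega(T^-)}<1 .
\]
Consequently $\sigma(\Delta_h)\cap\{|z|\ge1\}$ is a finite set of eigenvalues of finite algebraic multiplicity, and everything reduces to showing this set is empty for all small $h$.

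For that last step the idea is to pass to the finite-dimensional unstable channel. For small $h$ these critical eigenvalues cluster near the $\le\dim X^+$ points $e^{h\lambda}$, $\lambda\in\sigma(A)\cap\overline{\mathbb{C}}_\alpha$, which sit at mutual distance of order $h$ and at distance of order $h$ from the remainder of $\sigma(T(h))$; the Riesz projection of $\Delta_h$ onto this cluster is expected to converge to the projection $\Pi$ onto $X^+$ as $h\to0$, and the compressed operator is the $(\dim X^+)$-square matrix $T^+(h)+S_h^+F^+=e^{hA^+}+\int_0^hT^+(s)B^+F^+\,ds=e^{h(A^++B^+F^+)}+O(h^2)$ plus a correction due to the couplings $S_h^+F^-$ and $S_h^-F^+$. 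Since exponential stability of the semigroup generated by $A_{BF}$ together with Assumption~\ref{assump:controllability} confines the relevant finite-dimensional closed-loop spectrum to $\mathbb{C}_-$, $e^{h(A^++B^+F^+)}$ has spectral radius $e^{-h\delta}<1$ for some $\delta>0$, and the classical finite-dimensional sampled-data stability estimate shows the $O(h^2)$ and coupling terms cannot move an eigenvalue to modulus $\ge1$ once $h$ is small. Combined with $r_{\mathrm{ess}}(\Delta_h)<1$ this gives $r(\Delta_h)<1$, hence exponential stability; this is close to the argument of \cite{Logemann2003}.

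The hard part is exactly this reduction to the finite-dimensional channel. Because $B$ is unbounded, $\|T(s)B\|_{\mathcal{B}(U,X)}$ may blow up as $s\to0^+$, and although $\|S_hF\|_{\mathcal{B}(X)}\to0$ by Lemma~\ref{lem:Stau}, it typically does so only like a fractional power of $h$, not like $h$. Thus $\Delta_h=T(h)+S_hF$ is a \emph{small} norm-perturbation of the (possibly power-unstable) operator $T(h)$, but the perturbation is \emph{not} small on the $O(h)$ scale at which the critical eigenvalues of $T(h)$ live, so a naive perturbation of Riesz projections is unavailable; one has to exploit the finite rank of $F^+$ and the analytic smoothing of $T^-$ to show that the compressed, coupled finite-dimensional problem is nevertheless $O(h)$-accurate — this is the technically delicate point, and it is also where the analyticity of $T$ is genuinely used. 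An alternative, apparently cleaner, route is to regard $u(t)=Fx(t_k)$ as the stabilizing continuous feedback $u(t)=Fx(t)$ perturbed by $F(x(t)-x(t_k))$, to bound the gain of this perturbation on a suitable signal space by a quantity tending to $0$ as $h\to0$ via Lemmas~\ref{lem:uniform_conv} and \ref{lem:Stau}, and to invoke a robustness theorem for the exponentially stable closed loop; but that gain bound meets the same fractional-power obstruction and must be supplemented by the smoothing of $T$. Confirming the variation-of-parameters identities and the underlying well-posedness in the unbounded-$B$ case is further routine bookkeeping.
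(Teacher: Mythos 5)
First, note that the paper offers no proof of this statement: it is quoted as Theorem~4.8 of \cite{Logemann2003} and used as a black box (only its reformulation as power stability of $\Delta(h)=T(h)+S_hF$, via Lemma~2.3 of that reference, enters the proof of Theorem~\ref{thm:PET_ex_stability}). So there is nothing in-paper to compare against, and your argument has to stand on its own. Its first two reductions do: the equivalence of exponential stability of the sampled-data system with power stability of $\Delta(h)$, hence with $r(\Delta(h))<1$, is exactly Lemma~2.3 of \cite{Logemann2003}; and your observation that $\Delta(h)-\operatorname{diag}(0,T^-(h))$ is compact, so that $r_{\mathrm{ess}}(\Delta(h))\le e^{h\omega(T^-)}<1$ and any spectrum of modulus $\ge 1$ is a finite set of finite-multiplicity eigenvalues, is correct and a genuinely useful step.

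The decisive step --- ruling out eigenvalues of modulus $\ge 1$ for small $h$ --- is where the proposal breaks down, and not only for the (honestly acknowledged) reason that $\|S_hF\|_{\mathcal{B}(X)}$ is $o(1)$ but not $o(h)$. Your finite-dimensional reduction targets $e^{h(A^++B^+F^+)}$ and justifies its stability by asserting that exponential stability of the semigroup generated by $A_{BF}$ ``confines the relevant finite-dimensional closed-loop spectrum to $\mathbb{C}_-$''. That is unjustified and generally false: with respect to $X=X^+\oplus X^-$ the closed-loop generator is the \emph{full} block operator
\[
\begin{bmatrix} A^++B^+F^+ & B^+F^- \\ B^-F^+ & A^-+B^-F^- \end{bmatrix},
\]
and stability of this operator does not imply that its $(1,1)$ block $A^++B^+F^+$ is Hurwitz; already for $2\times 2$ matrices the off-diagonal coupling can be essential for stability. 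The block-triangular situation in which the diagonal blocks do govern stability is precisely the special case $F^-=F|_{X^-}=0$ treated in Theorem~\ref{thm:unbounded}; here $F$ is a general compact stabilizer and that structure is unavailable. Moreover, the coupling you propose to treat as a correction enters at leading order: $S_h^+F^- = hB^+F^-+O(h^2)$ is of the same order $O(h)$ as the stabilizing drift $h(A^++B^+F^+)$, and a Schur-type elimination of the stable channel produces $(\mu I-T^-(h)-S_h^-F^-)^{-1}$, of size $O(1/h)$ on the unit circle, multiplied against a coupling $S_h^-F^+$ that is only $o(1)$. So the compression is aimed at the wrong operator and the error terms are not controlled; the step where analyticity and compactness of $F$ must actually do the work is missing, and the result has to be taken from \cite{Logemann2003} rather than established by this sketch.
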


Theorem~\ref{thm:PET_ex_stability} below shows the existence of 
periodic event-triggering mechanisms
achieving the exponential stability of the closed-loop system \eqref{eq:plant}.
\begin{theorem}
	\label{thm:PET_ex_stability}
	Assume that $A$ generates an analytic semigroup $T(t)$ on X,
	$B \in \mathcal{B}(U,X_{-1})$, and $F \in \mathcal{B}(X,U)$ is compact.
	Assume further that
	the semigroup generated by $A_{BF}$ in \eqref{eq:A_BF_def}
	is exponentially stable.
	Choose $h > 0$ so that  the periodic sampled-data system \eqref{eq:plant}  with $t_{k+1} - t_k = h$, 
	$k \in \mathbb{N}_0$, is exponentially stable.
	Then there exists $\varepsilon^* >0$ such that
	the system \eqref{eq:plant} with the
	periodic event-triggering mechanism \eqref{eq:PET}
	is exponentially stable for every $\varepsilon \in (0,\varepsilon^*)$ and every $\ell_{\max} \in \mathbb{N}$.
\end{theorem}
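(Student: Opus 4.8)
The plan is to compare the event-triggered trajectory with the periodic sampled-data trajectory that has uniform sampling period $h$, which is known to be exponentially stable by hypothesis (equivalently by Theorem~\ref{thm:sampled_data}). The key observation is that under the periodic event-triggering mechanism \eqref{eq:PET}, every updating instant $t_k$ is an integer multiple of $h$, so on each sampling grid point the event-triggered dynamics agree with those of a sampled-data system whose sampling is $h$ \emph{except} that some updates are skipped (the control is held over several consecutive periods of length $h$ when no event fires and when $\ell_{\max}h$ has not been reached). Thus the idea is to view the event-triggered system as the periodic sampled-data system subject to a perturbation that is small when $\varepsilon$ is small, and then invoke a robustness/small-gain argument.

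First I would set up the discrete-time reformulation. Let $\Delta(h) := T(h) + S_h F \in \mathcal{B}(X)$ be the one-step transition operator of the $h$-periodic sampled-data system, so that exponential stability of the sampled-data system is equivalent to $r(\Delta(h)) < 1$ (spectral radius), and in fact to the existence of constants so that $\|\Delta(h)^n\|_{\mathcal{B}(X)} \le \widetilde M \rho^n$ for some $\rho \in (0,1)$. Sampling the event-triggered solution at the grid $\{jh\}_{j \in \mathbb{N}_0}$, one gets $x((j+1)h) = T(h)x(jh) + S_h F x(t_{k(j)})$, where $t_{k(j)} \le jh$ is the last event before or at $jh$; writing $x(t_{k(j)}) = x(jh) + \big(x(t_{k(j)}) - x(jh)\big)$ gives $x((j+1)h) = \Delta(h)x(jh) + S_h\big(Fx(t_{k(j)}) - Fx(jh)\big)$. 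The triggering rule \eqref{eq:PET} guarantees $\|Fx(jh) - Fx(t_{k(j)})\|_U \le \varepsilon\|x(t_{k(j)})\|$ for all grid points, and combined with the lower bound on inter-event times (which follows, for this grid-based mechanism, trivially since $t_{k+1} - t_k \ge h$) and a uniform bound on $x$ over windows of length $\ell_{\max}h$, this yields $\|Fx(jh) - Fx(t_{k(j)})\|_U \le C\varepsilon\|x(jh)\|$ for a constant $C$ depending only on $h$, $\ell_{\max}$, $\|T(\cdot)\|$, $\|S_{(\cdot)}F\|$. Hence the perturbation term is bounded by $C\varepsilon\|S_h F\|_{\mathcal{B}(X)}\|x(jh)\|$.

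Next I would run the standard discrete perturbation argument: from $x((j+1)h) = \Delta(h)x(jh) + w_j$ with $\|w_j\| \le C'\varepsilon\|x(jh)\|$, unrolling and using $\|\Delta(h)^n\| \le \widetilde M\rho^n$ gives $\|x(jh)\| \le \widetilde M\rho^j\|x^0\| + \widetilde M C'\varepsilon\sum_{i=0}^{j-1}\rho^{j-1-i}\|x(ih)\|$, and a discrete Gronwall / geometric-series estimate shows that for $\varepsilon < \varepsilon^* := (1-\rho)/(\widetilde M C')$ the grid values decay geometrically, $\|x(jh)\| \le \widetilde M \tilde\rho^{\,j}\|x^0\|$ with $\tilde\rho := \rho + \widetilde M C'\varepsilon < 1$. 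Finally, to pass from the grid to continuous time, I would use \eqref{eq:unique_solution} to bound $\|x(jh + s)\|$ for $s \in [0,h)$ in terms of $\|x(jh)\|$ and $\|x(t_{k(j)})\|$ (both controlled by $\|x(jh)\|$ up to the constant $C$), obtaining $\|x(t)\| \le M^* e^{-\gamma t}\|x^0\|$ for suitable $M^* \ge 1$, $\gamma > 0$; since $\ell_{\max}$ enters only through the constant $C$ (via the window length $\ell_{\max}h$ over which the input is held), the same $\varepsilon^*$ works for every $\ell_{\max} \in \mathbb{N}$ — although if one wants $\varepsilon^*$ genuinely independent of $\ell_{\max}$ one should note that the worst-case hold length is in fact controlled by the inter-event bound, so the constant $C$ can be taken uniform in $\ell_{\max}$.

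The main obstacle I anticipate is obtaining the estimate $\|Fx(jh) - Fx(t_{k(j)})\|_U \le C\varepsilon\|x(jh)\|$ with a constant $C$ that is genuinely uniform — in particular uniform in $\ell_{\max}$ — because between $t_{k(j)}$ and $jh$ the input is held fixed and the state can in principle grow by a factor depending on how many periods of length $h$ elapse, which is at most $\ell_{\max}$. Resolving this cleanly requires arguing that the triggering condition itself caps how far $Fx$ can drift from $Fx(t_k)$ on $[t_k, jh)$ — namely it is at most $\varepsilon\|x(t_k)\|$ by definition of $\psi_k$ as long as $jh < \psi_k$ — so the drift bound $\varepsilon\|x(t_k)\|$ is automatic and only the conversion $\|x(t_k)\| \le C\|x(jh)\|$ (equivalently the reverse, via a short-time perturbation estimate using $S_h F$ being small for small $h$, as in Lemma~\ref{thm:no_zeno} and Lemma~\ref{lem:no_zeno_xt_bound}) needs care. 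I expect this conversion, together with correctly bookkeeping the one exceptional grid point where $t_{k+1} = t_k + \ell_{\max}h$ triggers without the norm condition, to be the delicate part; the rest is the routine discrete-time robustness computation.
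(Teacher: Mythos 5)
Your overall strategy---sample on the grid $\{jh\}$, write the event-triggered recursion as the power-stable operator $\Delta(h)=T(h)+S_hF$ plus a perturbation coming from the triggering error, and close with a discrete Gronwall/geometric-series argument---is the same as the paper's. But there is a genuine gap in the way you normalize the perturbation. You want $\|Fx(jh)-Fx(t_{k(j)})\|_U\le C\varepsilon\|x(jh)\|$, and you obtain it from the triggering bound $\|Fx(jh)-Fx(t_{k(j)})\|_U\le\varepsilon\|x(t_{k(j)})\|$ together with a conversion $\|x(t_{k(j)})\|\le C\|x(jh)\|$. That conversion is exactly the lower-bound estimate of Lemma~\ref{lem:no_zeno_xt_bound}, which requires the coercivity condition \eqref{eq:T1_coersive} on $T(t)$---a hypothesis that Theorem~\ref{thm:PET_ex_stability} does \emph{not} assume and that fails for the prototypical analytic semigroups covered by this theorem. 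Concretely, for the heat semigroup of Example~\ref{ex:minimum_inter_event_time_PPDE} with $F x=-\langle x,\phi_0\rangle_{L^2}\phi_0$ and initial data containing a high mode $\phi_n$, the held dynamics give $x(jh)=\Delta(jh-t_k)x(t_k)$ with $\|\Delta(jh-t_k)\phi_n\|=e^{-n^2\pi^2(jh-t_k)}$ arbitrarily small, so no uniform constant $C$ with $\|x(t_k)\|\le C\|x(jh)\|$ exists. Your proposed fix (``a short-time perturbation estimate \ldots as in Lemma~\ref{thm:no_zeno} and Lemma~\ref{lem:no_zeno_xt_bound}'') therefore relies on an unavailable hypothesis, and the per-grid-point perturbation bound $\|w_j\|\le C'\varepsilon\|x(jh)\|$ on which your discrete Gronwall step rests is not justified.

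The paper sidesteps this entirely by never converting to the current grid point: it keeps the error bounded by $\varepsilon\|x(\ell_kh)\|$ (the state at the \emph{last event}), unrolls the recursion in one block from $\ell_kh$ to $\ell_{k+1}h$ to get $x(\ell_{k+1}h)=\Delta(h)^{\ell_{k+1}-\ell_k}x(\ell_kh)-\sum_{p}\Delta(h)^{\ell_{k+1}-\ell_k-p-1}S_h\,e((\ell_k+p)h)$, and sums the geometric series $\sum_p\delta^{\ell_{k+1}-\ell_k-p-1}\le 1/(1-\delta)$ in the renormalized norm $|x|_{\rm d}=\sup_\ell\|\delta^{-\ell}\Delta(h)^\ell x\|$. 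This gives the contraction factor $\delta^{\ell_{k+1}-\ell_k}(1-\varepsilon_0)+\varepsilon_0<1$ with $\varepsilon_0=\varepsilon M\|S_h\|_{\mathcal{B}(U,X)}/(1-\delta)$, hence $\varepsilon^*=(1-\delta)/(M\|S_h\|_{\mathcal{B}(U,X)})$ independent of $\ell_{\max}$, with no conversion constant at all. If you restructure your induction to run event-to-event rather than grid-point-to-grid-point, your argument goes through and coincides with the paper's; as written, the key estimate is unsupported.
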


\begin{proof}
	Theorem~\ref{thm:sampled_data} guarantees 
	the existence of $h>0$ such that the periodic sampled-data system \eqref{eq:plant}  with $t_{k+1} - t_k = h$, 
	$k \in \mathbb{N}_0$, is exponentially stable.
	By Lemma~2.3  in \cite{Logemann2003}, this exponential stability is achieved if and only if
	the operator $\Delta(h) := T(h) + S_h F \in \mathcal{B}(X)$ is power stable,
	that is, there exist $M \geq 1$ and $\delta \in (0,1)$ such that
	\[
	\|\Delta(h)^k \|_{\mathcal{B}(X)}  \leq M \delta^k \qquad \forall k \in \mathbb{N}_0.
	\]
	
	For the time sequence $\{t_k\}_{k\in \mathbb{N}_0}$ defined by \eqref{eq:PET},
	let $\ell_k  \in \mathbb{N}_0$ satisfy $t_k = \ell_k h$.
	We have from \eqref{eq:unique_solution} that
	for all $p \in \{0,\dots, \ell_{k+1} - \ell_k\}$ and 
	all $k \in \mathbb{N}_0$,
	\begin{align}
	\label{eq:x_TS_dis}
	x\big((\ell_k + p + 1)h\big) &= T(h) x\big((\ell_k + p)h\big) + S_hF x(\ell_kh).
	\end{align}
	Since  $S_h \in \mathcal{B}(U,X)$ by Lemma~\ref{lem:Stau},
	we can regard the following proof of Theorem~\ref{thm:PET_ex_stability} 
	as the discrete-time counterpart
	of Theorem~\ref{thm:bounded1}.
	
	For $p \in \{0,\dots, \ell_{k+1} - \ell_k - 1\}$ and
	$k \in \mathbb{N}_0$,
	define the error $e$ by
	\begin{equation}
	\label{eq:IIE_def}
	e\big((\ell_k + p)h\big) := Fx\big((\ell_k + p)h\big) -  F x(\ell_kh).
	\end{equation}
	By \eqref{eq:x_TS_dis},
	\begin{align*}
	x\big((\ell_k + p + 1)h\big)	
	&=
	\Delta(h) x\big((\ell_k + p)h\big) - S_he\big((\ell_k + p)h\big).
	\end{align*}
	for every $p \in \{0,\dots, \ell_{k+1} - \ell_k - 1\}$ and every
	$k \in \mathbb{N}_0$.
	Applying induction the equation above, we obtain
	\begin{equation}
	\label{eq:x_ell_k_1}
	x(\ell_{k+1}h)=
	\Delta(h)^{\ell_{k+1} - \ell_k} x(\ell_k h) 
	- \sum_{p=0}^{\ell_{k+1} - \ell_k-1}
	\Delta(h)^{\ell_{k+1} - \ell_k - p -1} S_h e\big((\ell_k + p)h\big)
	\end{equation}
	for 
	all $k \in \mathbb{N}_0$.

	We introduce a new norm $|\cdot|_{\rm d}$ on $X$ defined by
	\[
	|x|_{\rm d} := \sup_{\ell \in \mathbb{N}_0} \|\delta^{-\ell} \Delta(h)^\ell x\|.
	\]
	As in \eqref{eq:new_norm_prop} for the continuous-time counterpart,
	this norm has the following properties:
	\begin{align}
	\label{eq:norm_d_property}
	\|x\| \leq |x|_{\rm d} \leq M\|x\|,\quad 
	|\Delta(h)^k x|_{\rm d} \leq \delta^k |x|_{\rm d} \qquad 
	\forall x \in X,~\forall k\in \mathbb{N}_0.
	\end{align}
	Under the  event-triggering mechanism \eqref{eq:PET}, 
	the error $e$ given in \eqref{eq:IIE_def}
	satisfies
	\begin{equation}
	\label{eq:IIE_bound}
	\big\|e\big((\ell_k + p)h\big)\big\| \leq \varepsilon \|x(\ell_kh)\|\qquad 
	\forall p \in \{0,\dots, \ell_{k+1} - \ell_k -1\},~
	\forall k \in \mathbb{N}_0.
	\end{equation}
	Combining \eqref{eq:x_ell_k_1}--\eqref{eq:IIE_bound}, we obtain
	\begin{align*}
	|x(\ell_{k+1}h)|_{\rm d} &\leq 
	\delta^{\ell_{k+1} - \ell_k} |x(\ell_k h)|_{\rm d} +  \varepsilon M \|S_h\|_{\mathcal{B}(U,X)} 
	\sum_{p=0}^{\ell_{k+1} - \ell_k - 1}
	\delta^{\ell_{k+1} - \ell_k - p - 1}|x(\ell_k h)|_{\rm d} \\
	&= 
	\left(
	\delta^{\ell_{k+1} - \ell_k} (1-\varepsilon_0) + \varepsilon_0
	\right) \cdot |x(\ell_k h)|_{\rm d}\qquad \forall k \in \mathbb{N}_0,
	\end{align*}
	where 
	\[
	\varepsilon_0 :=  \varepsilon \frac{M\|S_h\|_{\mathcal{B}(U,X)} }{1-\delta}.
	\]
	Choose the threshold $\varepsilon >0$
	so that 
	$\delta(1-\varepsilon_0) + \varepsilon_0 < 1$, i.e., 
	\[
	\varepsilon < \frac{1-\delta}{M\|S_h\|_{\mathcal{B}(U,X)} }.
	\]
	Define the function $f(\ell)$ by
	\[
	f(\ell) := \frac{-\log (\delta^\ell (1-\varepsilon_0) + \varepsilon_0)}{\ell h}.
	\]
	Then $f(\ell)$ is positive and monotonically decreasing in $\mathbb{N}$.
	Applying \eqref{eq:norm_d_property}, we obtain
	\begin{align}
	\notag
	\|x(\ell_{k+1}h)\| 
	\leq
	|x(\ell_{k+1}h)|_{\rm d}\
	&\leq e^{-\gamma (\ell_{k+1} - \ell_k)h} |x(\ell_kh)|_{\rm d} \\
	\label{eq:xl_k1h_bound}
	&\leq	
	e^{-\gamma \ell_{k+1}h} |x^0|_{\rm d} \leq M e^{-\gamma \ell_{k+1}h} \|x^0\|
	\qquad \forall k \in \mathbb{N}_0,
	\end{align}
	where $\gamma := f(\ell_{\max}) >0$.
	
	From Lemma~\ref{lem:Stau}, there exists $L \geq 1$ such that $\|\Delta(\tau)\|_{\mathcal{B}(X)} \leq L$
	for every $\tau \in [0,\ell_{\max}h]$.
	Therefore, we see from \eqref{eq:unique_solution} and \eqref{eq:xl_k1h_bound}
	that 
	\begin{align*}
	\|x(\ell_kh + \tau)\| 
	&\leq	
	L \|x(\ell_kh)\| \\
	&\leq 
	\left(
	e^{\gamma \ell_{\max} h} LM 
	\right) \cdot
	e^{-\gamma (\ell_k h +\tau)} \|x^0\|\qquad \forall \tau \in 
	(0,(\ell_{k+1} - \ell_k)h],~\forall k \in \mathbb{N}_0.
	\end{align*}
	Thus, the  system \eqref{eq:plant} with the
	periodic event-triggering mechanism \eqref{eq:PET}
	is exponentially stable and its stability margin is at least $\gamma > 0$.
\end{proof}
\begin{remark}
	{\em
		As easily seen from the proof of Theorem~\ref{thm:PET_ex_stability},
		one can obtain
		a similar result for the following event-triggering
		mechanism that uses the error  of the state:
		\begin{subequations}
			\begin{align*}
			t_0 &:= 0,\quad 
			\psi_k := 
			\min\big\{\ell h > t_k:
			\|x(\ell h) - x(t_k)\| > \varepsilon \|x(t_k)\|,~\ell \in \mathbb{N} 
			\big\} \\
			t_{k+1} &:= 
			\min \{ t_k + \ell_{\max}h,~ \psi_k\}
			\qquad \forall k \in \mathbb{N}_0.
			\end{align*}
		\end{subequations}
	}
\end{remark}

\begin{remark}
	{\em
		In Theorem~\ref{thm:PET_ex_stability}, we assume that
		$A$ generates an analytic semigroup and that $F$ is compact.
		These assumptions are used for the existence of 
		sampling periods with respect to which
		the periodic sampled-data system is exponentially stable. We 
		can replace them with different assumptions such as
		those of Corollary 2.3 in \cite{Rebarber2006}.
	}
\end{remark}

\section{Numerical examples}
In this section, we provide numerical examples for both the case 
$B \in \mathcal{B}(U,X)$ and $B \not\in \mathcal{B}(U,X)$.
We consider a cascade ODE-PDE system for bounded $B$ and
an Euler-Bernoulli beam for unbounded $B$.
\subsection{Bounded control}
We illustrate the event-triggering mechanism in Theorem~\ref{thm:bounded1} with
a heat PDE in cascade with an ODE.
Let $b = 
\begin{bmatrix}
b_1 & \cdots & b_n
\end{bmatrix}\in L^2([0,1],\mathbb{R})^{1 \times n}$,
$G \in \mathbb{R}^{n \times n}$, and $H \in \mathbb{R}^{n \times m}$.
For the space variable $\xi \in [0,1]$ and the time variable $t \geq 0$,
we consider the following ODE-PDE system:
\begin{subequations}
	\label{eq:ODE_PDE}
	\begin{align}
	&\frac{\partial z_1}{\partial t}(\xi,t) = 
	\frac{\partial^2 z_1}{\partial \xi^2}(\xi,t) + b(\xi)z_2(t),\quad \xi \in [0,1],~t \geq 0 \\
	& \frac{\partial z_1}{\partial \xi}(0,t) = 0,\quad \frac{\partial z_1}{\partial \xi}(1,t) = 0,\quad t\geq 0;\quad
	z_1(\xi,0) = z_1^0(\xi),\quad \xi \in [0,1] \\
	& \dot z_2(t) = Gz_2(t) + Hu(t),\quad t\geq 0;\qquad
	z_2(0) = z_2^0,
	\end{align}
\end{subequations}
where 
$z_1(\xi,t)$ is the temperature at position $\xi \in [0,1]$ and time $t\geq0$,
$z_2(t)$ is the state of the ODE, and
$u(t)$ is the input.

\subsubsection{Exponential stability of continuous-time closed-loop system}
We can reformulate the cascaded ODE-PDE \eqref{eq:ODE_PDE} as 
an abstract evolution equation \eqref{eq:state_equation} in the following way.
We write $L^2(0,1)$ in place of $L^2([0,1],\mathbb{C})$.
Define the state space $X$ and the input space $U$ by
$X :=   L^2(0,1) \times \mathbb{C}^n$ and $U := \mathbb{C}^m$.
Let $z_1^0 \in L^2(0,1) $ and
$z_2^0 \in \mathbb{C}^n$.
The state space $X$ is a Hilbert space with the inner product
\[
\left\langle
\begin{bmatrix}
x_1 \\ x_2
\end{bmatrix},~
\begin{bmatrix}
y_1 \\ y_2
\end{bmatrix}
\right\rangle :=
\big\langle x_1,~
y_1
\big\rangle_{L^2} + 
\left\langle
x_2,~
y_2
\right\rangle_{\mathbb{C}^n}.
\]
Set
\[
x(t) :=  
\begin{bmatrix}
x_1(t) \\ x_2(t)
\end{bmatrix}
\text{~~with~~$x_1(t) := z_1(\cdot, t)$ and $x_2(t) := z_2(t)$};
\qquad x^0 :=
\begin{bmatrix}
z_1^0 \\ z_2^0
\end{bmatrix} \in X.
\]
Define $\phi_0(\xi) := 1$ and $\phi_n (\xi) := \sqrt{2} \cos(n \pi\xi )$ for 
$n \in \mathbb{N}$, which
form an orthonormal basis for $L^2(0,1)$.
Define 
$A_1 : D(A_1) \subset L^2(0,1) \to L^2(0,1)$ by
\begin{equation}
\label{eq:A1_expression}
A_1x_1 :=-\sum_{n = 0}^\infty 
n^2\pi^2
\langle
x_1,\phi_n
\rangle_{L^2} \phi_{n} 
\end{equation}
with domain
\[
D(A_1) := \left\{
x_1 \in L^2(0,1):
\sum_{n = 0}^\infty  n^4\pi^4 
|\langle
x_1,\phi_n
\rangle_{L^2}|^2 < \infty
\right\}
\]
and $B_1 : \mathbb{C}^n \to L^2(0,1)$  by
\[
B_1x_2 :=  bx_2\qquad \forall x_2 \in \mathbb{C}^n.
\]
If we set
\[
A := 
\begin{bmatrix}
A_1 & B_1 \\
0 & G
\end{bmatrix}\text{~~with~~$D(A) := D(A_1) \times \mathbb{C}^n$},\qquad 
B := 
\begin{bmatrix}
0 \\
H
\end{bmatrix},
\]
then we can rewrite the cascaded ODE-PDE \eqref{eq:ODE_PDE} as
an abstract evolution equation in the form of \eqref{eq:state_equation};
see Example~2.3.7 on p.~45 in \cite{Curtain1995}
for the expansion \eqref{eq:A1_expression} of $A_1$.

Consider the situation where the state $x(t)$ is observed at all $t\geq0$. 
In the case of continuous-time control,
we generate the input $u$ as follows:
\begin{equation}
\label{eq:ODE_PDE_control}
u(t) = F x(t),\quad t \geq 0,
\end{equation}
where $
F := 
\begin{bmatrix}
F_1 & F_2
\end{bmatrix}
$
with $F_1 \in \mathcal{B}(L^2(0,1),\mathbb{C}^m)$ and $F_2 \in \mathbb{C}^{m \times n}$.
In this case, the dynamics of the closed-loop system is given by
\begin{equation}
\label{eq:ODE_PDE_closed}
\dot x (t) = 
(A+BF) x(t),\quad t \geq 0;\qquad x(0) = x^0 \in X.
\end{equation}

The following proposition provides a sufficient condition for
the strongly continuous semigroup $T_{BF}(t)$ generated by $A+BF$ to be
exponentially stable.

\begin{proposition}
	\label{prof:gain_cond}
	Consider the evolution equation \eqref{eq:ODE_PDE_closed} obtained from 
	the ODE-PDE system \eqref{eq:ODE_PDE} and the state-feedback 
	controller \eqref{eq:ODE_PDE_control} 
	as above.
	Suppose that $F_1 x_1 = f\langle x_1,\phi_0 \rangle_{L^2}$ for some $f \in \mathbb{C}^m$.
	The strongly continuous semigroup $T_{BF}(t)$ generated by 
	$A+BF$ is exponentially stable if the rational transfer functions
	\[
	{\bf G}_1(\lambda)
	:= \big(
	\lambda  - B^+ (\lambda I-G-HF_2)^{-1} Hf
	\big)^{-1},\quad {\bf G}_2(\lambda)
	:= (\lambda I-G-HF_2)^{-1} 
	\]
	have poles only in $\mathbb{C}_-$,
	where $B^+ := 
	\begin{bmatrix} 
	\langle b_1,\phi_0 \rangle_{L^2} &\cdots&
	\langle b_n,\phi_0 \rangle_{L^2}
	\end{bmatrix}
	$.
\end{proposition}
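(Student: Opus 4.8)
The plan is to reduce the exponential stability of $T_{BF}(t)$ to that of a single $(n+1)\times(n+1)$ matrix exponential by decomposing the state space along the spectral structure of $A_1$, and then to deduce the Hurwitz property of that matrix from the pole conditions on ${\bf G}_1$ and ${\bf G}_2$. Since $A_1\phi_0=0$ while the remaining eigenvalues $-n^2\pi^2$, $n\in\mathbb N$, of $A_1$ lie in $\mathbb C_-$, and since $F_1$ acts only through the $\phi_0$-coordinate, I would split $X=X_{\rm f}\oplus X_{\rm s}$ with $X_{\rm f}:=\mathspan\{\phi_0\}\times\mathbb C^n$ and $X_{\rm s}:=\{x_1\in L^2(0,1):\langle x_1,\phi_0\rangle_{L^2}=0\}\times\{0\}$. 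A short computation shows that $X_{\rm s}$ is $(A+BF)$-invariant, since for $(\tilde x_1,0)\in X_{\rm s}$ one has $(A+BF)(\tilde x_1,0)=(A_1\tilde x_1,\,HF_1\tilde x_1)=(A_1\tilde x_1,0)$, using that $A_1$ maps into $\overline{\mathspan\{\phi_n:n\in\mathbb N\}}$ and that $F_1\tilde x_1=f\langle\tilde x_1,\phi_0\rangle_{L^2}=0$; the restriction of $A+BF$ to $X_{\rm s}$ is simply $A_1$ on $\overline{\mathspan\{\phi_n:n\in\mathbb N\}}$, which generates a semigroup $T_{\rm s}(t)$ satisfying $\|T_{\rm s}(t)\|\le e^{-\pi^2 t}$. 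Writing $\zeta:=\langle x_1,\phi_0\rangle_{L^2}$ and using $\langle B_1x_2,\phi_0\rangle_{L^2}=B^+x_2$ together with $F_1x_1=f\zeta$, one finds that in this decomposition $A+BF$ has block lower-triangular form, with zero $(1,2)$-block, bounded $(2,1)$-block, and $(1,1)$-block equal, after the isometric identification $\zeta\phi_0\leftrightarrow\zeta$, to
\[
\mathcal A:=\begin{bmatrix}0&B^+\\ Hf&G+HF_2\end{bmatrix}.
\]

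Since $B_1=b(\cdot)\in\mathcal B(\mathbb C^n,L^2(0,1))$, the $(2,1)$-block $C$ is bounded, so $A+BF$ is a bounded perturbation of the block-diagonal operator with diagonal blocks $\mathcal A$ and $A_1|_{X_{\rm s}}$, the perturbation $N$ having $C$ as its sole nonzero (namely $(2,1)$) block, whence $N^2=0$. Iterating the variation-of-parameters formula then reduces to two terms and yields
\[
T_{BF}(t)=\begin{bmatrix}e^{\mathcal A t}&0\\ \int_0^t T_{\rm s}(t-s)\,C\,e^{\mathcal A s}\,ds&T_{\rm s}(t)\end{bmatrix}.
\]
As $T_{\rm s}(t)$ is exponentially stable, the convolution term inherits exponential decay from the two diagonal blocks, so $T_{BF}(t)$ is exponentially stable if and only if $\mathcal A$ is Hurwitz.

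To establish that $\mathcal A$ is Hurwitz under the hypotheses, I would use the Schur complement formula for the lower-right $n\times n$ block: for $\lambda\notin\sigma(G+HF_2)$,
\[
\det(\lambda I-\mathcal A)=\det(\lambda I-G-HF_2)\bigl(\lambda-B^+(\lambda I-G-HF_2)^{-1}Hf\bigr)=\det(\lambda I-G-HF_2)\,{\bf G}_1(\lambda)^{-1},
\]
hence ${\bf G}_1(\lambda)=\det(\lambda I-G-HF_2)/\det(\lambda I-\mathcal A)$, and this identity extends to all $\lambda$ by analyticity. Now suppose some $\mu\in\sigma(\mathcal A)$ had $\re\mu\ge0$. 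If $\mu\notin\sigma(G+HF_2)$, the numerator of ${\bf G}_1$ is nonzero at $\mu$ while the denominator vanishes there, so $\mu$ is a pole of ${\bf G}_1$; if instead $\mu\in\sigma(G+HF_2)$, then $\mu$ is a pole of ${\bf G}_2(\lambda)=(\lambda I-G-HF_2)^{-1}$, the set of whose poles equals $\sigma(G+HF_2)$. Either way we contradict the assumption that ${\bf G}_1$ and ${\bf G}_2$ have poles only in $\mathbb C_-$. Therefore $\sigma(\mathcal A)\subset\mathbb C_-$, and the exponential stability of $T_{BF}(t)$ follows.

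The step I expect to require the most care is the pole–zero bookkeeping in the previous paragraph: a root of $\det(\lambda I-\mathcal A)$ lying in $\overline{\mathbb C}_0$ could a priori be cancelled by a root of $\det(\lambda I-G-HF_2)$, and it is precisely the second hypothesis (on ${\bf G}_2$) that rules this out. The remaining points are routine: that $A+BF$ generates a strongly continuous semigroup, being a bounded perturbation of $A$; that the $(2,1)$-block $C$ is bounded because $b\in L^2([0,1],\mathbb R)^{1\times n}$; and that the block representation of $T_{BF}(t)$ above is valid on the non-invariant subspace $X_{\rm f}$ as well as on the invariant subspace $X_{\rm s}$.
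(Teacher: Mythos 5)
Your proof is correct, but it takes a genuinely different route from the paper's. The paper argues at the level of trajectories: it writes the mild solution for $x_1$, projects onto $\phi_0$ to obtain a closed integral equation for $x_1^+(t)=\langle x_1(t),\phi_0\rangle_{L^2}$ coupled with $x_2$, solves it by Laplace transform to get $\mathcal{L}[x_1^+](\lambda)={\bf G}_1(\lambda)\big(\langle z_1^0,\phi_0\rangle_{L^2}+B^+{\bf G}_2(\lambda)z_2^0\big)$, reads off exponential decay of $x_1^+$ from the pole hypotheses, and then bootstraps the decay first to $x_2$ and finally to the orthogonal complement $x_1^-$ using the exponential stability of $T_1(t)$ restricted to $\{\phi_0\}^{\perp}$. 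You instead work at the level of the generator: the same splitting $L^2(0,1)=\mathspan\{\phi_0\}\oplus\{\phi_0\}^{\perp}$ puts $A+BF$ into block lower-triangular form with a finite-dimensional block $\mathcal{A}$, an exponentially stable diagonal block, and a bounded nilpotent coupling, so the perturbation series truncates after two terms and everything reduces to $\mathcal{A}$ being Hurwitz; the Schur-complement identity ${\bf G}_1(\lambda)=\det(\lambda I-G-HF_2)/\det(\lambda I-\mathcal{A})$ then converts the pole hypotheses into $\sigma(\mathcal{A})\subset\mathbb{C}_-$, and you correctly isolate the one delicate point, namely that an unstable root of $\det(\lambda I-\mathcal{A})$ could be cancelled by a root of $\det(\lambda I-G-HF_2)$, which is exactly what the hypothesis on ${\bf G}_2$ excludes since every eigenvalue of $G+HF_2$ is a pole of the resolvent. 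Your version buys a more transparent account of what the two pole conditions are doing (together they locate $\sigma(\mathcal{A})\cup\sigma(G+HF_2)$, so one also sees the condition is essentially sharp), at the cost of setting up the block-operator machinery; the paper's computation avoids that machinery and stays entirely with scalar and vector-valued functions of time, which is closer in style to how the decay constants are subsequently estimated numerically.
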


\begin{proof}
	From the strong continuity of $T_{BF}(t)$,
	it follows that $x(t) := T_{BF}(t)x^0$ is continuous for every $t\geq 0$.
	Moreover, there exist $M \geq 1$ and $\omega \in \mathbb{R}$ such that 
	$\|x(t)\| \leq M e^{\omega t}$ for every $t\geq 0$.
	
	Let $T_1(t)$ be the strongly continuous semigroup 
	generated by $A_1$. Then
	\[
	T_1(t) x_1 = \sum_{n = 0}^\infty 
	e^{-n^2\pi^2 t}
	\langle
	x_1,\phi_n
	\rangle_{L^2} \phi_{n}\qquad \forall x_1 \in L^2(0,1),~\forall t\geq 0.
	\]
	Since
	\begin{equation}
	\label{eq:x1_ODE_PDE}
	x_1(t) = T_1(t) z_1^0 + \int^t_0 T_1(t-s)B_1x_2(s)ds\qquad \forall t\geq 0,
	\end{equation}
	it follows that 
	\begin{equation}
	\label{eq:zeta_diff}
	x^+_1(t) = \langle z_1^0 ,\phi_0 \rangle_{L^2} + 
	\int^t_0 B^+x_2(s) ds\qquad \forall t\geq 0,
	\end{equation}
	where $x^+_1(t):= \langle x_1(t), \phi_0 \rangle_{L^2}$ and 
	$B^+ := 
	\begin{bmatrix} 
	\langle b_1,\phi_0 \rangle_{L^2} &\cdots&
	\langle b_n,\phi_0 \rangle_{L^2}
	\end{bmatrix}$.

	On the other hand,
	\begin{equation}
	\label{eq:x_solution}
	x_2(t) = e^{(G+HF_2)t} z_2^0 + \int^t_0 e^{(G+HF_2)(t-s)} Hf x_1^+(s)ds\qquad \forall t\geq 0.
	\end{equation}
	Substituting this into \eqref{eq:zeta_diff}, we obtain
	\[
	x_1^+(t) = \langle z_1^0 ,\phi_0 \rangle_{L^2}  + 
	\int^t_0 \left(
	B^+ e^{(G+HF_2)s} z_2^0 +\int^s_0  B^+e^{(G+HF_2)(s-p)} Hf x_1^+(p)dp\right) ds
	\]
	for every $t\geq 0$.
	This integral equation can be solved by the Laplace transform.
	Denote by $\mathcal{L}[x_1^+]$ the Laplace transform of $x_1^+$.
	For every sufficiently large $\lambda >0$,
	we obtain
	\begin{align*}
	\lambda \mathcal{L}[x_1^+](\lambda) &= \langle z_1^0 ,\phi_0 \rangle_{L^2}  + 
	B^+ (\lambda I-G-HF_2)^{-1} z_2^0 \\
	&\qquad\qquad  + 
	B^+ (\lambda I-G-HF_2)^{-1} Hf \mathcal{L}[x_1^+](\lambda),
	\end{align*}
	and hence
	\begin{equation}
	\label{eq:x1p_laplace}
	\mathcal{L}[x_1^+](\lambda) = {\bf G}_1(\lambda)
	\big(
	\langle z_1^0 ,\phi_0 \rangle_{L^2} +
	B^+ {\bf G}_2(\lambda) z_2^0
	\big),
	\end{equation}
	where
	${\bf G}_1(\lambda) := \big(
	\lambda  - B^+ (\lambda I-G-HF_2)^{-1} Hf
	\big)^{-1}$ and ${\bf G}_2(\lambda) := (\lambda I-G-HF_2)^{-1} $.
	
	Suppose that ${\bf G}_1(\lambda)$ and ${\bf G}_2(\lambda)$ 
	have poles only in $\mathbb{C}_-$.
	By \eqref{eq:x1p_laplace},
	there exist $M_1\geq 1$ and $\omega_1 >0$ such that 
	$|x_1^+(t)| \leq M_1 e^{-\omega_1 t} \|x^0\|$ for every $t\geq 0$.
	This together with \eqref{eq:x_solution} shows that 
	there exist $M_2\geq 1$ and $0< \omega_2 \leq \omega_1$ such that 
	$\|x_2(t)\|_{\mathbb{C}^n} \leq M_2e^{-\omega_2 t} \|x^0\|$
	for every $t\geq 0$.
	
	Define the projection operator $\Pi$ on $L^2(0,1)$ by
	$\Pi x_1 = \langle x_1, \phi_0 \rangle \phi_0$ for  $x_1 \in L^2(0,1)$.
	Set $x_1^- := (I - \Pi)x_1$, $A_1^- := A_1|_{(I - \Pi)L^2(0,1)}$, and 
	$B_1^- := (I-\Pi) B_1$.
	Lemma~2.5.7 on p.~71 in \cite{Curtain1995} 
	shows that 
	$T_1^-(t) :=  T_1(t)|_{(I - \Pi)L^2(0,1)}$ is the strongly continuous semigroup 
	generated by $A_1^-$ and is exponentially stable.
	Using \eqref{eq:x1_ODE_PDE}, we obtain
	\[
	x_1^-(t) = T_1^-(t) (I - \Pi)z^0_1 + \int^t_0 T_1^-(t-s) B_1^- x_2(s)ds\qquad \forall t\geq 0.
	\]
	Therefore,
	there exist $M_3\geq 1$ and $0< \omega_3 \leq \omega_2$ such that 
	$\|x_1^-(t)\| \leq M_3 e^{-\omega_3 t} \|x^0\|$
	for every $t\geq 0$.
	Thus, $T_{BF}(t)$ is exponentially stable.
\end{proof}

\subsubsection{Numerical simulation}
Let $n=m=1$, $F_1 x_1 = f\langle x_1,\phi_0 \rangle_{L^2}$ for some $f \in \mathbb{R}$, and
$F_2 \in \mathbb{R}$.
Since $\mathbf{G}_1(\lambda)$ in Proposition~\ref{prof:gain_cond}
is given by
\[
\mathbf{G}_1(\lambda)
=
\frac{\lambda - G-HF_2}{\lambda^2 - (G+HF_2)\lambda - B^+Hf},
\]
it follows that
$T_{BF}(t)$ is exponentially stable if  $G+HF_2 < 0$ and  $B^+Hf < 0$.

Set
\[
b = b_1 = 5 \mathds{1}_{[0.4,0.6]},~~G = 0.5,~~H=1,~~f = -1,~~F_2 = -2.5,
\]
where $\mathds{1}_{[0.4,0.6]}$ is 
the indicator function of the interval $[0.4,0.6]$.
We obtain $B^+ = \langle b,\phi_0 \rangle_{L^2} =  1$, and hence $T_{BF}(t)$ is exponentially stable. Furthermore,
as seen in the proof of Proposition~\ref{prof:gain_cond},
for every $\omega \in (0,1]$,
there exists $M\geq 1$ such that 
\begin{equation}
\label{eq:M_bound}
\|T_{BF}(t)\|_{\mathcal{B}(X)}  \leq M e^{-\omega t} \qquad \forall t\geq 0.
\end{equation}
We here find a constant $M = M(\omega)$ satisfying \eqref{eq:M_bound} 
by approximation as follows.
For $N \in \mathbb{N}$, define the projection operator $\Pi_N$ on $L^2(0,1)$ by
\[
\Pi_N x_1 := \sum_{n=0}^N \langle x_1,\phi_n \rangle_{L^2} \phi_n \qquad \forall x_1 \in L^2(0,1)
\]
and the operators $A_1(N),B_1(N)$, and $F_1(N)$ on finite-dimensional spaces by
\[
A_1(N) := A_1|_{\Pi_N L^2(0,1)},~~B_1(N) := \Pi_N B_1,~~
F_1(N) := {F_1}|_{\Pi_N L^2(0,1)}.
\]
Set 
\[
A(N) := 
\begin{bmatrix}
A_1(N) & B_1(N) \\
0_{1\times (N+1)} & G
\end{bmatrix},~
B(N) :=
\begin{bmatrix}
0_{(N+1)\times 1}\\
H
\end{bmatrix},~
F(N) := 
\begin{bmatrix}
F_1(N) & F_2 
\end{bmatrix}.
\]
By the same argument in the proof of Proposition~\ref{prof:gain_cond},
we have that for every $N \in \mathbb{N}$ and every $\omega \in (0,1]$, there exists $M(N,\omega) \geq 1$ such that 
\begin{equation}
\label{eq:approximation_M_bound}
\left\|e^{(A(N)+B(N)F(N))t} \right\|_{\mathbb{C}^{(N+2) \times (N+2)}}  \leq M(N,\omega) e^{-\omega t}\qquad
\forall t \geq 0.
\end{equation}
For $N \in \mathbb{N}$ and  $\omega \in (0,1]$, set 
\begin{equation}
\label{eq:Mmin_def}
M_{\min}(N,\omega) := 
\sup_{t \geq 0} 
e^{\omega t}\left\|e^{(A(N)+B(N)F(N))t} \right \|_{\mathbb{C}^{(N+2) \times (N+2)}},
\end{equation}
which can be computed numerically.
We choose a constant $M = M(\omega)$ in \eqref{eq:M_bound} so that
\[
M \geq  \limsup_{N\to \infty} M_{\min}(N,\omega).
\]

Set $\omega = 0.5$. 
As shown in Fig.~\ref{fig:M_plot}, we numerically see that 
$M_{\min}(N,0.5)$ converges to the value less than
$1.571$ as $N \to \infty$.
Therefore we here assume that $M = 1.571$ satisfies \eqref{eq:M_bound} with $\omega = 0.5$.
By Theorem~\ref{thm:bounded1},
if the threshold $\varepsilon$ of  the event-triggering mechanism  \eqref{eq:time_seq_upper_bound} 
satisfies  $\varepsilon \leq 0.31$, then
the system \eqref{eq:plant} with  this event-triggering mechanism
is exponentially stable for every $\tau_{\max} > 0$.

For the time responses, the initial states $z_1^0$ and $z_2^0$ are given by
$
z_1^0(\xi) \equiv 1$ and $z_2^0 = -1$, respectively.
In the simulations, we approximate  $L^2(0,1)$ by 
the linear span of $\{\phi_n: n \in \mathbb{N}_0, n\leq 20\}$.
Fig.~\ref{fig:response_ODE_PDE} depicts the state norm $\|x(t)\|$ and the input $u(t)$ 
by the event-triggering mechanism
\eqref{eq:time_seq_upper_bound} with  $\varepsilon = 0.3$ and $\tau_{\max} = 1$.
As a comparison, we also show in Fig.~\ref{fig:response_ODE_PDE} 
the case under periodic sampled-data control with $t_{k+1} - t_k \equiv 0.4$.
We see from Fig.~\ref{fig:state_ODE_PDE} that the event-triggering mechanism
achieves faster convergence of $\|x(t)\|$ than the conventional periodic mechanism.
Define $T_{\rm s} := \sup\{
t\geq 0: \|x(t)\| > 0.05 \|x^0\|
\}$. 
A small $T_{\rm s}$ means  the fast convergence of the state.
We obtain $T_{\rm s} = 3.838$ under the event-triggering mechanism and
$T_{\rm s} = 4.061$ under the conventional periodic mechanism.
On the other hand, the numbers of the input updates on $(0,T_{\rm s})$ are 
$10$ under both the mechanisms.
This implies that 
the event-triggering mechanism achieves faster state convergence, by
efficiently updating the control input.
In fact,
we observe from Fig~\ref{fig:input_ODE_PDE} that 
the event-triggering mechanism updates the control input more frequently 
on the interval $[0,1.5]$ when the (relative) change of $Fx(t)$ is large, but
less frequently on the interval $[2,5]$ when the change of $Fx(t)$ is small.


\begin{figure}[tb]
	\centering
	\includegraphics[width = 6.3cm]{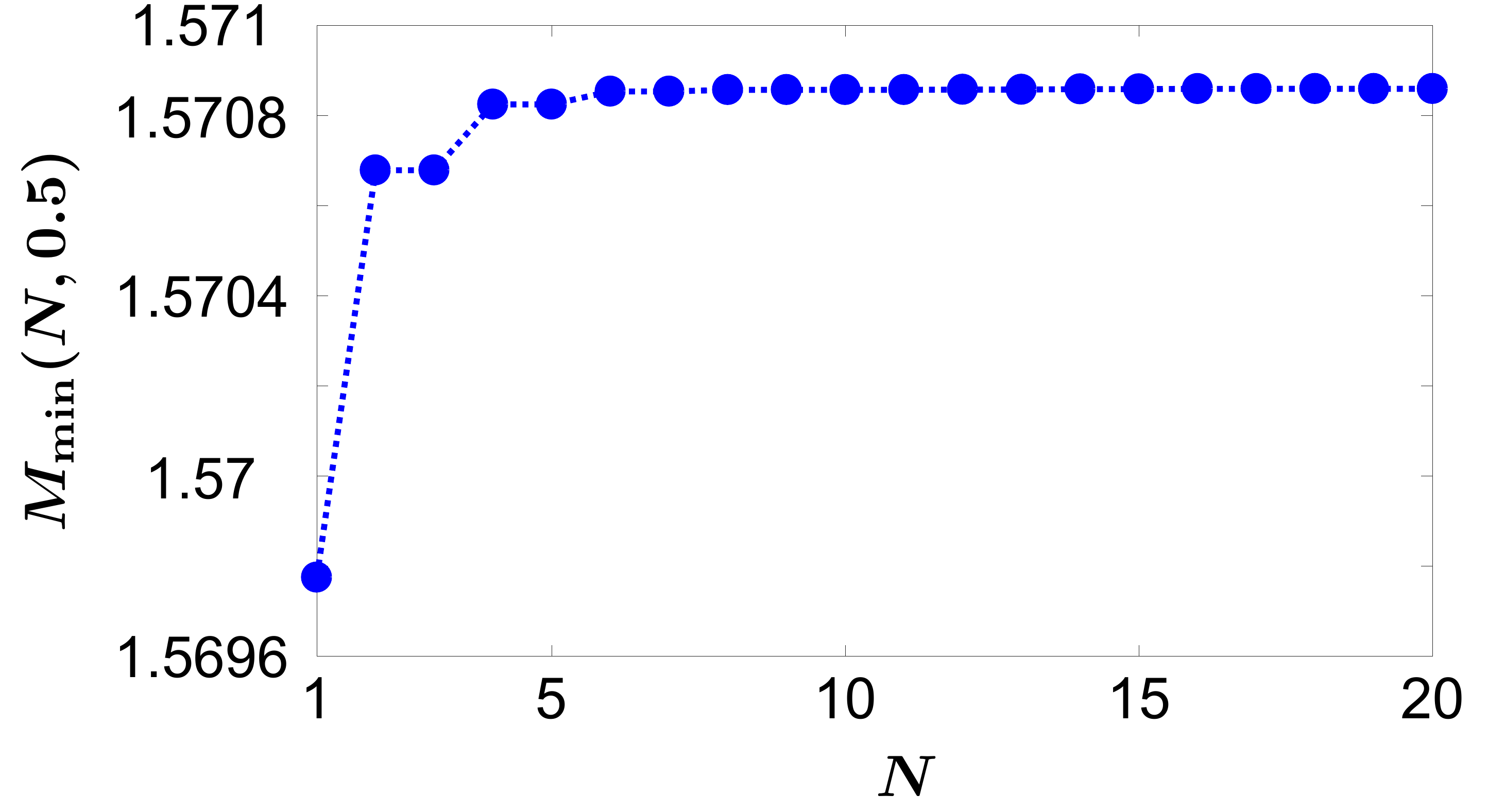}
	\caption{Constant $M_{\min}(N,0.5)$ in \eqref{eq:Mmin_def}.}
	\label{fig:M_plot}
\end{figure}

\begin{figure}
	\centering
	\subcaptionbox{State norm $\|x(t)\|$.
		\label{fig:state_ODE_PDE}}
	[.49\linewidth]
	{\includegraphics[width = 6.3cm,clip]{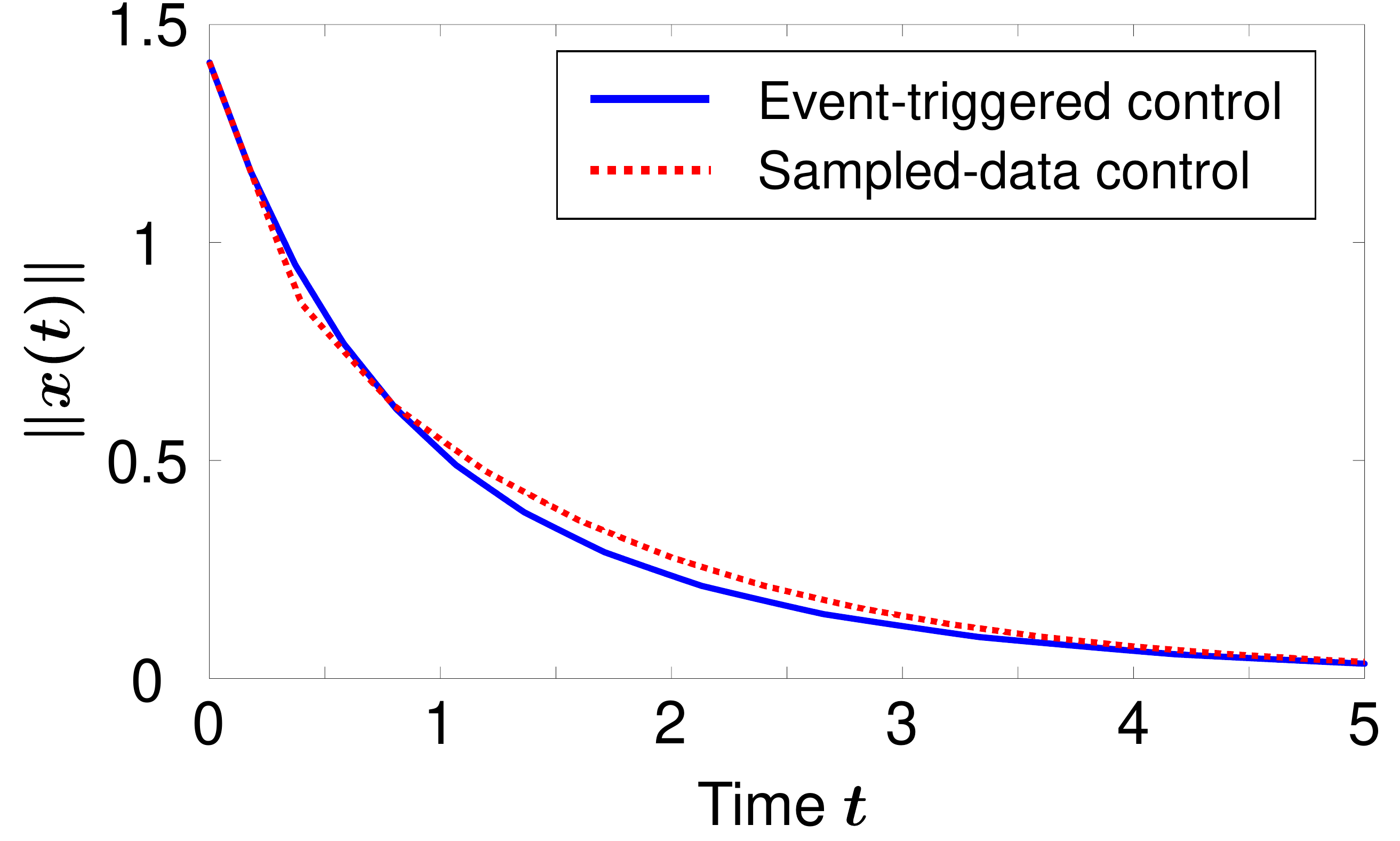}} 
	\subcaptionbox{Input $u(t)$.
		\label{fig:input_ODE_PDE}}
	[.5\linewidth]
	{\includegraphics[width = 6.3cm,clip]{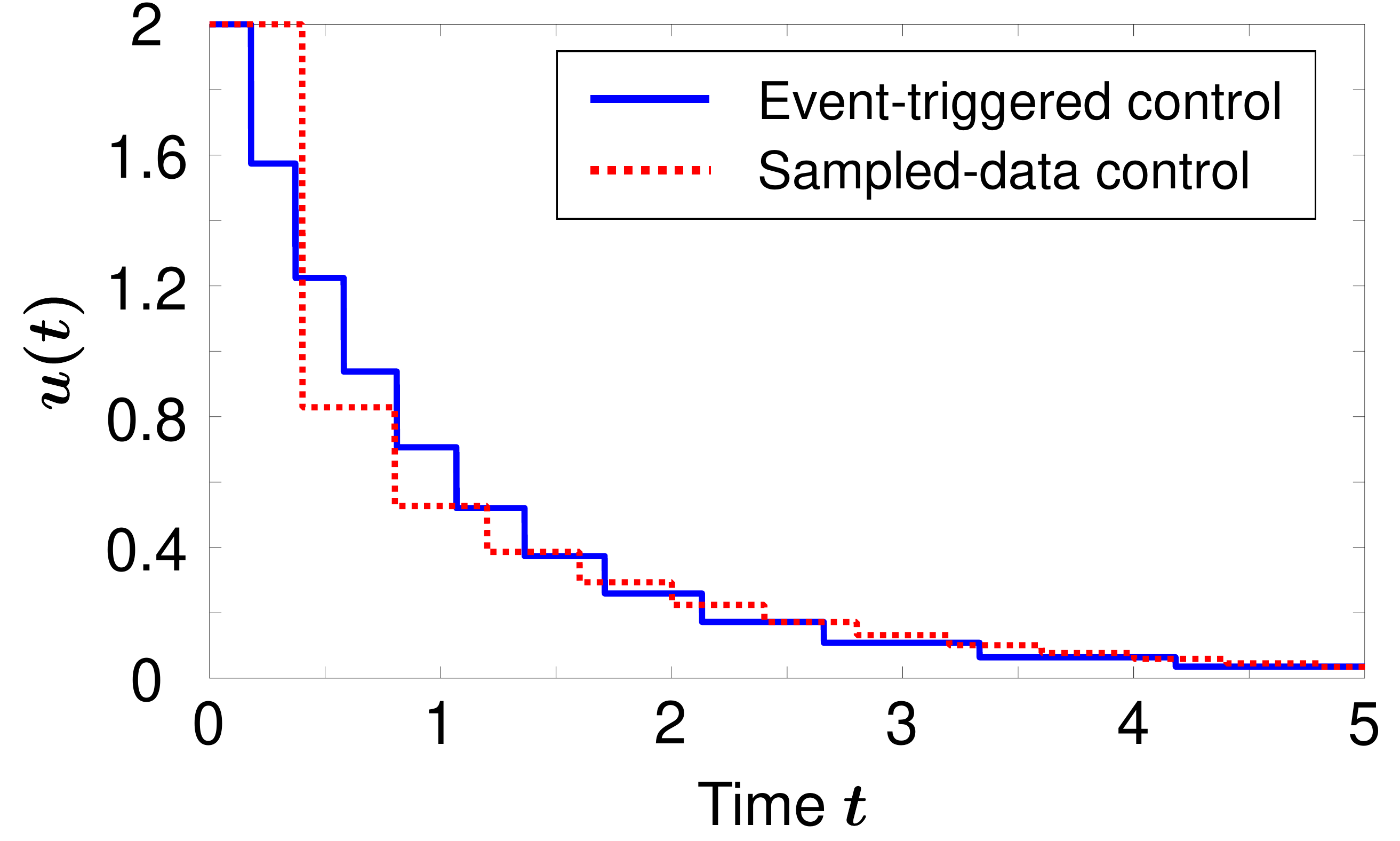}}
	\caption{Time response. \label{fig:response_ODE_PDE}}
\end{figure}


\subsection{Unbounded control}
As a numerical example in the case $B \not\in \mathcal{B}(U,X)$,
we apply the event-triggering mechanism in Theorem~\ref{thm:unbounded}
to an Euler-Bernoulli beam with structural damping \cite{Logemann2005}.
Let $\xi \in [0,1]$ and $t \geq 0$ denote the space and time variables.
We assume that the Euler-Bernoulli beam is hinged at the one end of the beam 
$\xi = 0$ and 
has a freely sliding clamped end at the other end $\xi = 1$.
Suppose that the shear force $u(t)$ is applied at $\xi = 1$.
The dynamics of the Euler-Bernoulli beam is given by
\begin{subequations}
	\label{eq:beam}
	\begin{align}
	\label{eq:DE}
	&\frac{\partial^2 z}{\partial t^2}(\xi,t) - 
	2 \gamma \frac{\partial^3 z}{\partial \xi^2 \partial t}(\xi,t) + 
	\frac{\partial^4 z}{\partial \xi^4}(\xi,t) =0, \quad \xi\in [0,1],~ t \geq 0\\
	\label{eq:BC}
	&z(0,t) = 0,\quad \frac{\partial^2 z}{\partial \xi^2}(0,t) = 0,\quad 
	\frac{\partial z}{\partial \xi}(1,t) = 0,\quad 
	-\frac{\partial^3 z}{\partial \xi^3}(1,t) = u(t),\quad t\geq 0,
	\end{align}
\end{subequations}
where $z(\xi,t)$ is the lateral deflection of the beam at location $\xi \in [0,1]$ and 
time $t\geq 0$, $u(t)$ is the input, and $\gamma \in (0,1)$
is the damping constant.

\subsubsection{Abstract evolution equation of Euler-Bernoulli beams}
We here recall 
the results developed in Section~5 of \cite{Logemann2005}:
an abstract evolution equation of the form \eqref{eq:state_equation}
for the PDE \eqref{eq:beam}.

We write $L^2(0,1)$ and $W^{4,2}(0,1)$ in place of 
$L^2([0,1], \mathbb{C})$ and $W^{4,2}([0,1], \mathbb{C})$, respectively.
We introduce the operator $A_0:D(A_0) \subset L^2(0,1) \to L^2(0,1)$,
\[
A_0 \zeta := \frac{d^4 \zeta}{d \xi^4}
\]
with domain
$
D(A_0) := \{
\zeta \in W^{4,2} (0,1):\zeta(0) = 0,~\zeta^{\prime \prime}(0) = 0,~
\zeta^{\prime}(1) = 0,~\zeta^{\prime \prime \prime }(1) = 0
\}.
$
We consider
the state space $X := D\big(A_0^{1/2}\big) \times L^2(0,1)$, which is 
a Hilbert space with the inner product
\[
\left\langle
\begin{bmatrix}
x_1 \\ x_2
\end{bmatrix},~
\begin{bmatrix}
y_1 \\ y_2
\end{bmatrix}
\right\rangle :=
\big\langle
A_0^{1/2}x_1,~
A_0^{1/2}y_1
\big\rangle_{L^2} + 
\left\langle
x_2,~
y_2
\right\rangle_{L^2}.
\]
The input space $U$ is given by $U := \mathbb{C}$.

Define  $e_{n} \in L^2(0,1)$ and $\lambda_n \in \mathbb{C}$ by
\begin{align*}
e_{n}(\xi) &:= \sqrt{2} \sin
\left(
-\frac{\pi}{2} + n\pi
\right) \xi \qquad \forall \xi \in [0,1],~\forall n \in \mathbb{N} \\
\lambda_{\pm n} &:= 
\left(-\gamma  \pm i\sqrt{1-\gamma^2} \right) \cdot \left(
-\frac{\pi}{2} + n\pi
\right)^2\qquad \forall n \in \mathbb{N}.
\end{align*}
The sets of functions $\{f_n \}_{n \in \mathbb Z^*}$
and $\{g_n \}_{n \in \mathbb Z^*}$
defined by
\begin{align*}
f_{\pm n} := 
\frac{\sqrt{2}}{1- \left(-\gamma \mp i \sqrt{1-\gamma^2} \right)^2}
\begin{bmatrix}
e_n/\lambda_{\pm n} \\
e_n
\end{bmatrix},
\quad 
g_{\pm n} &:= 
\frac{1}{\sqrt 2}
\begin{bmatrix}
-e_n/\lambda_{\mp n} \\
e_n
\end{bmatrix}
\qquad \forall n \in \mathbb{N}
\end{align*}
are biorthogonal, that is, 
\[
\langle
f_n,g_m
\rangle = 
\begin{cases}
1 & n = m \\
0 & n\not=m.
\end{cases}
\]
Using $\{f_n \}_{n \in \mathbb Z^*}$
and $\{g_n \}_{n \in \mathbb Z^*}$,
we define 
the operator $A:D(A) \subset X \to X$  by
\begin{align*}
A x := \sum_{n \in \mathbb{Z}^*} \lambda_{n} 
\langle
x,g_n
\rangle f_{n}\text{~~with~~} D(A):= 
\left\{
x \in X:
\sum_{n \in \mathbb{Z}^*} |\lambda_{n}|^2 \cdot
|\langle
x,g_n
\rangle|^2 < \infty
\right\},
\end{align*}
which is the generator of the
strongly continuous semigroup semigroup $T(t)$ given by
\[
T(t)x := \sum_{n \in \mathbb{Z}^*} e^{\lambda_{n} t}
\langle
x,g_n
\rangle f_{n}\qquad \forall x \in X,~\forall t \geq 0.
\]
Then $\sigma(A) = \{\lambda_{n}:n \in \mathbb{Z}^*\}$.
Let $X_{-1}$ denote 
the extrapolation space  associated with $T(t)$,
and define $B \in \mathcal{B}(U,X_{-1})$ by
\begin{equation*}
Bu := u \sum_{n \in \mathbb{Z}^*} (-1)^{|n| + 1} f_{n}\qquad 
\forall u \in \mathbb{C}.
\end{equation*}
Introducing the state vector
\[
x(t) := 
\begin{bmatrix}
z(\cdot, t) \\ \dfrac{\partial z}{\partial t}(\cdot, t)
\end{bmatrix},
\]
we can rewrite the PDE \eqref{eq:beam} 
in the form $\dot x(t) = Ax(t)+Bu(t)$, $t\geq0$.

\subsubsection{Numerical simulation}
Since $\sigma(A) = \{\lambda_{n}:n \in \mathbb{Z}^*\}$,
we see that Assumption~\ref{assump:finite_multiplicities}
is satisfied for every $\alpha < 0$.
We here
choose
$\alpha \in (-9\gamma \pi^2/4, -\gamma \pi^2/4)$ for the system decomposition
of Section~\ref{subsec:decomp} and check
whether or not Assumptions~\ref{assump:exponential_stability_T-} and 
\ref{assump:controllability} hold. For such $\alpha$, we obtain
$\sigma(A) \cap \overline{\mathbb{C}}_{\alpha} = \{\lambda_{-1},~\lambda_{1}\}$
and
$\omega(T^-) = -9\gamma \pi^2/4 < 0$.
The subspace $X^+:= \Pi X$ is spanned by 
$\{f_{-1},~f_1\}$, and using this basis, we can rewrite $A^+:= A|_{X^+}$ and $B^+ := \Pi B$ as
\[
A^+ = 
\begin{bmatrix}
\lambda_{-1} & 0 \\
0 & \lambda_1
\end{bmatrix},\quad 
B^+ = 
\begin{bmatrix}
1 \\ 1
\end{bmatrix}.
\]
Clearly, $(A^+, B^+)$ is controllable.
Thus Assumptions~\ref{assump:exponential_stability_T-} and 
\ref{assump:controllability} are satisfied.

Set $\gamma = 1/15$ and define the feedback operator 
$F$ by
\[
Fx := -\frac{13}{4} \gamma \pi^2 
\left(
\langle 
x,g_{-1}
\rangle 
+
\langle 
x,g_{1}
\rangle 
\right)\qquad \forall x \in X.
\]
Similarly to $A^+$ and $B^+$, we can rewrite $F^+:= F|_{X^+}$ with
respect to the basis $\{f_{-1}, f_1\}$ in the following way:
\[
F^+ = 
-\frac{13}{4} \gamma \pi^2
\begin{bmatrix}
1 & 1
\end{bmatrix}.
\]
We see from Proposition~\ref{prop:finite_event_trigger_cond} that 
if the threshold $\varepsilon > 0$ 
satisfies $\varepsilon \leq 0.83$, then
this finite-dimensional system 
\eqref{eq:finite_state_equation}
with the event-triggering mechanism \eqref{eq:event_trigger_present_finite}
is exponentially stable and its stability margin is at least
$7\gamma \pi^2/4$.

To compare event-triggered control and periodic sampled-data control,
we compute the time responses of 
the Euler-Bernoulli beam with the following initial state:
\[
z(\xi,0) = 1- \cos(\pi\xi),\quad
\frac{\partial z}{\partial t}(\xi,0) = 0\qquad \forall \xi \in [0,1].
\]
We apply the event-triggering 
mechanism \eqref{eq:event_trigger_present_finite} with
$\varepsilon = 0.70$ and $\tau_{\max} = 1$. For
periodic sampled-data control, we set $t_{k+1} - t_k \equiv 0.15$.
In the simulation, we approximate the state space $X$ by 
the linear span of $\{f_n: n \in \mathbb{Z}^*,~|n| \leq 15\}$.
Fig.~\ref{fig:z_norm} shows that the time responses of $\|x(t)\|$
are close between event-triggered control and periodic sample-data control.
However, we observe from Fig.~\ref{fig:input} that the difference between
the control input $u(t)$ under 
the event-triggering mechanism 
and that under the periodic mechanism is large, particularly, during the time interval $[0,1]$.
Since the relative change of $Fx(t)$ is small on $[0.3,0.8]$, the event-triggering mechanism
refrains from updating the input.

To check where or not the event-triggering mechanism can
reduce the number of the input updates,
we compute the  time $T_{\rm s} := \sup\{
t\geq 0: \|x(t)\| > 0.05 \|x^0\|
\}$
and count how many times the control input is updated on $(0,T_{\rm s})$.
Under the event-triggering 
mechanism \eqref{eq:event_trigger_present_finite},
we obtain $T_{\rm s} = 1.882$, and 
the control input is updated 
$10$ times on $(0,T_{\rm s})$.
On the other hand, under the periodic mechanism,
the  time $T_{\rm s}$ is $T_{\rm s} = 1.912$ and the number of control updates 
is $12$ on $(0,T_{\rm s})$.
Hence, the event-triggering mechanism achieves faster convergence with
less control updates than the periodic mechanism
in this example. 


\begin{figure}
	\centering
	\subcaptionbox{State norm $\|x(t)\|$.
		\label{fig:z_norm}}
	[.49\linewidth]
	{\includegraphics[width = 6.3cm,clip]{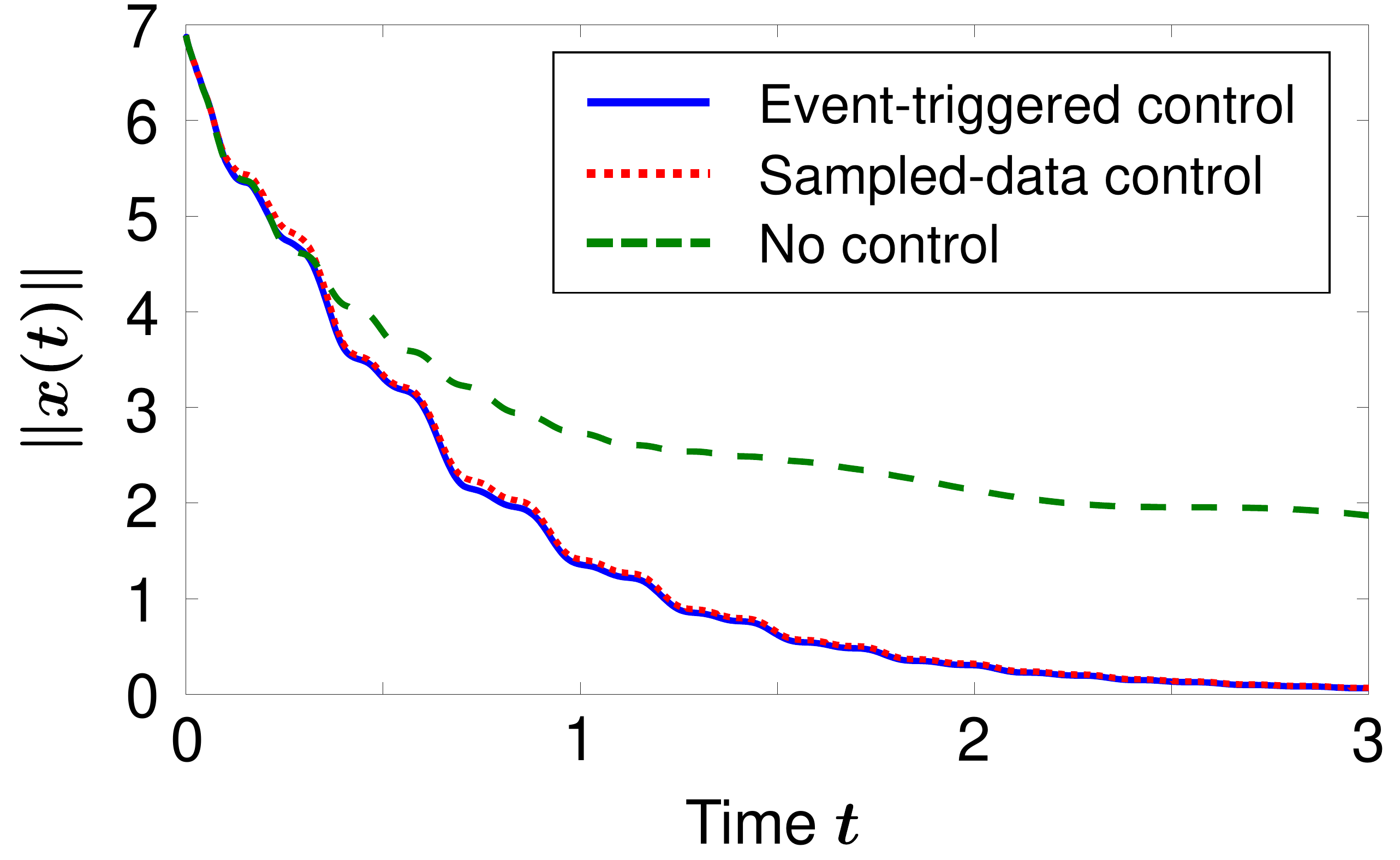}} 
	\subcaptionbox{Input $u(t)$.
		\label{fig:input}}
	[.5\linewidth]
	{\includegraphics[width = 6.3cm,clip]{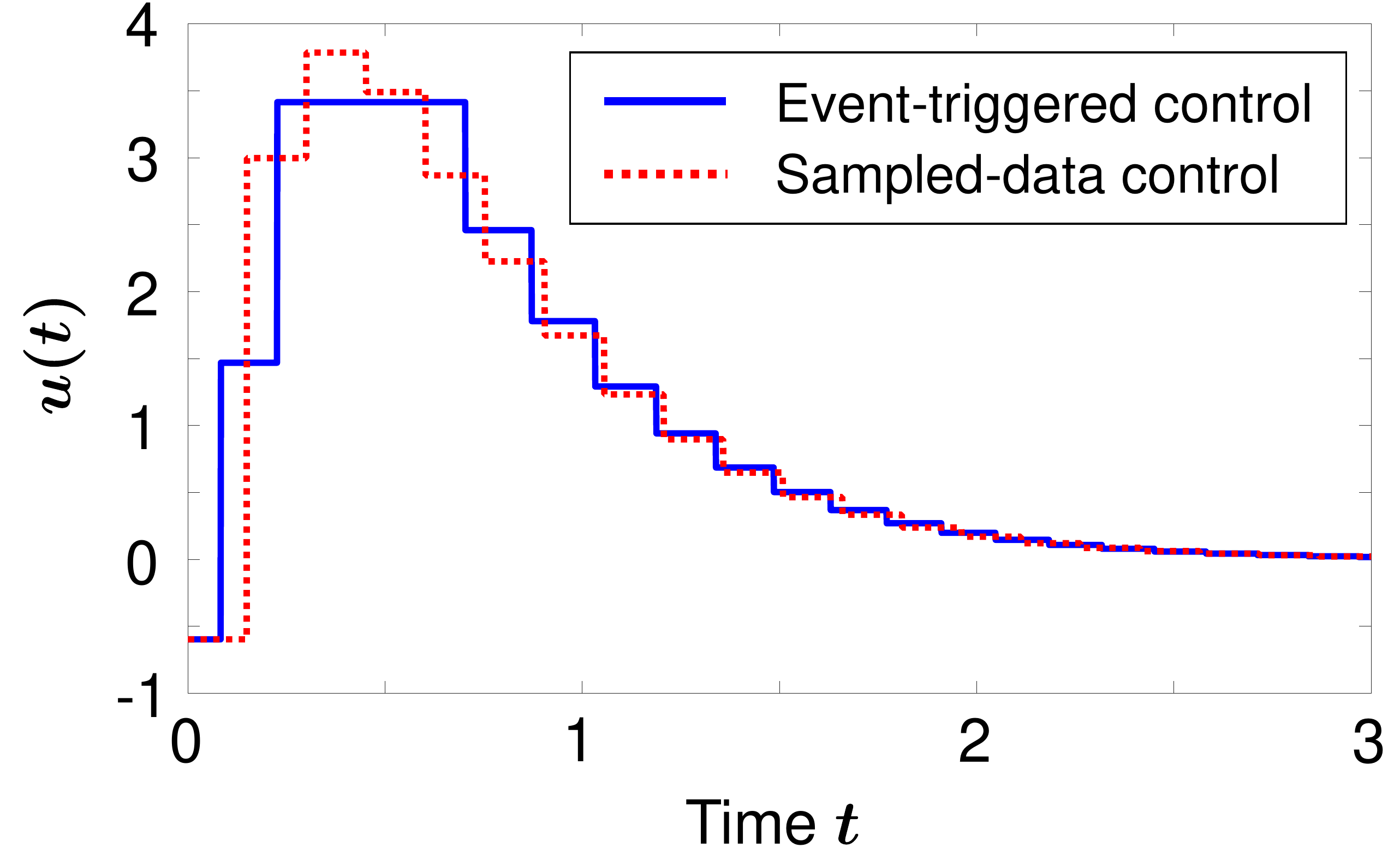}}
	\caption{Time response. \label{fig:response}}
\end{figure}

\section{Conclusion}
We have investigated the minimum inter-event time and the
exponential stability of infinite-dimensional event-triggered control systems.
We have employed the event-triggering mechanisms that 
compare the plant state and 
the error of the control input induced by the event-triggered implementation.
For these event-triggering mechanisms, 
we have shown that the minimum inter-event time is bounded from below 
by a strictly positive constant if 
the feedback operator is compact.
Moreover, we have obtained sufficient conditions on the threshold of
the event-triggering mechanisms for the exponential stability of the closed-loop system
with a bounded control operator.
For infinite-dimensional systems with unbounded control operators,
we have also analyzed the exponential stability of the closed-loop system under 
two event-triggering mechanisms, which 
are based on system decomposition and periodic event-triggering, respectively.
Future work involves extending the proposed analysis
to semi-linear infinite-dimensional systems.

\appendix
\section{Proof of Theorem~\ref{thm:continuity_initial_state}}
\label{sec:appendixA}
For $\tau \geq 0$,
define 
the operator $\Delta(\tau) \in \mathcal{B}(X)$  by 
$
\Delta(\tau) := T(\tau) + S_{\tau} F.
$
By the argument in Section 2.2,
if $F \in \mathcal{B}(X,U)$ is compact and if 
\eqref{eq:T1_coersive} holds for some $c_1>0$ and $s_1>0$, then 
there exists $\theta_{\rm m} >0$ such that
\[
\|F \Delta(\tau) x^0 - Fx^0\|_U \leq \varepsilon \| \Delta(\tau)x^0\|
\qquad \forall x^0 \in X,~\forall \tau\in [0,\theta_{\rm m}).
\]

The following lemma provides some properties of the operator $\Delta$, which
are useful to study the continuous dependence of solutions of the evolution equation 
\eqref{eq:plant}  under
the event-triggering mechanism \eqref{eq:time_seq_present} on initial states.
\begin{lemma}
	\label{lem:Delta_continuity}
	For any $\theta >0$, 
	the operator $\Delta(\tau) \in \mathcal{B}(X)$ satisfies 
	\[
	\sup_{0\leq \tau \leq \theta} \| \Delta(\tau)\|_{\mathcal{B}(X)}  < \infty.
	\]
	Moreover, if $F \in \mathcal{B}(X,U)$ is compact, then
	for every $x^0 \in X$ and every $\tau \geq 0$,
	\begin{align*}
	\lim_{\kappa \to 0}\Delta(\tau+\kappa)x^0 = \Delta(\tau) x^0,\qquad 
	\lim_{\kappa \to 0}\Delta(\tau+\kappa)^*x^0 = \Delta(\tau)^* x^0.
	\end{align*}
\end{lemma}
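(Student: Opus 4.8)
The plan is to reduce everything to three ingredients: the defining decomposition $\Delta(\tau) = T(\tau) + S_\tau F$; the ``splitting identity'' $S_{\sigma+h}u - S_\sigma u = T(\sigma)S_h u$ for all $\sigma,h\geq 0$ and $u\in U$, obtained from the change of variables $\int_\sigma^{\sigma+h}T(s)Bu\,ds = T(\sigma)\int_0^h T(r)Bu\,dr$; and Lemma~\ref{lem:Stau}, which supplies both $\sup_{0\le\tau\le\theta}\|S_\tau\|_{\mathcal{B}(U,X)} < \infty$ and, crucially, $\|S_\tau F\|_{\mathcal{B}(X)}\to 0$ as $\tau\to 0$ (this is where compactness of $F$ is used). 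Together with the local boundedness and strong continuity of $T(t)$ and of its adjoint $T(t)^*$, these give all three claims by the same triangle-inequality pattern already employed in Lemmas~\ref{thm:no_zeno} and \ref{lem:no_zeno_xt_bound}.

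For the uniform bound, I would simply estimate $\|\Delta(\tau)\|_{\mathcal{B}(X)} \le \|T(\tau)\|_{\mathcal{B}(X)} + \|S_\tau\|_{\mathcal{B}(U,X)}\,\|F\|_{\mathcal{B}(X,U)}$ and take the supremum over $\tau\in[0,\theta]$, which is finite because a strongly continuous semigroup is uniformly bounded on compact intervals and by the first part of Lemma~\ref{lem:Stau}. No compactness assumption enters here.

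For the strong continuity of $\tau\mapsto\Delta(\tau)$, fix $x^0\in X$ and $\tau\geq 0$ and write $\Delta(\tau+\kappa)x^0 - \Delta(\tau)x^0 = \big(T(\tau+\kappa)x^0 - T(\tau)x^0\big) + \big(S_{\tau+\kappa}Fx^0 - S_\tau Fx^0\big)$. The first difference tends to $0$ as $\kappa\to 0$ by strong continuity of $T$ on $[0,\infty)$. For the second, the splitting identity gives, for $\kappa>0$, $S_{\tau+\kappa}Fx^0 - S_\tau Fx^0 = T(\tau)S_\kappa Fx^0$, of norm at most $\|T(\tau)\|_{\mathcal{B}(X)}\,\|S_\kappa F\|_{\mathcal{B}(X)}\,\|x^0\|$, and for $-\tau\le\kappa<0$, $S_\tau Fx^0 - S_{\tau+\kappa}Fx^0 = T(\tau+\kappa)S_{-\kappa}Fx^0$, of norm at most $\big(\sup_{0\le s\le\tau}\|T(s)\|_{\mathcal{B}(X)}\big)\|S_{-\kappa}F\|_{\mathcal{B}(X)}\,\|x^0\|$; in both cases $\|S_{|\kappa|}F\|_{\mathcal{B}(X)}\to 0$ by \eqref{eq:SF_limit}. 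The adjoint statement is handled identically on $\Delta(\tau)^* = T(\tau)^* + F^*S_\tau^*$: the $T(\tau)^*$-part converges because on a Hilbert space $T(t)^*$ is again strongly continuous (cf.\ Theorem~2.2.6 on p.~37 in \cite{Curtain1995}), while dualizing the splitting identity gives $F^*(S_{\tau+\kappa}^* - S_\tau^*) = (S_\kappa F)^*T(\tau)^*$ for $\kappa>0$ and $F^*(S_\tau^* - S_{\tau+\kappa}^*) = (S_{-\kappa}F)^*T(\tau+\kappa)^*$ for $-\tau\le\kappa<0$; since $\|(S_{|\kappa|}F)^*\|_{\mathcal{B}(X)} = \|S_{|\kappa|}F\|_{\mathcal{B}(X)}\to 0$ and $\|T(\tau+\kappa)^*x^0\|\le\big(\sup_{0\le s\le\tau}\|T(s)\|_{\mathcal{B}(X)}\big)\|x^0\|$ is bounded for small $\kappa$, these terms vanish as $\kappa\to 0$.

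I do not expect a real obstacle here; the only point requiring a little care is the $X$ versus $X_{-1}$ bookkeeping in the splitting identity: the change of variables is carried out for the $X_{-1}$-valued integrals, but because $S_\sigma$, $S_{\sigma+h}$, and $S_h$ all map into $X$ by Lemma~\ref{lem:Stau}, the identity is in fact valid in $X$, and likewise its adjoint is a genuine identity of operators in $\mathcal{B}(X)$. Everything else is the triangle inequality together with \eqref{eq:SF_limit} and local boundedness and strong continuity of $T$ and $T^*$.
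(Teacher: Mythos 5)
Your proposal is correct and follows essentially the same route as the paper: the paper's one-line proof rests on the identity $\Delta(\tau+\tau_1)x-\Delta(\tau)x=T(\tau)\big(\Delta(\tau_1)x-x\big)$, which is exactly your splitting identity applied to $T$ and $S_\tau F$ together, combined with Lemma~\ref{lem:Stau}, \eqref{eq:SF_limit}, and the strong continuity of $T(t)$ and $T(t)^*$. Your write-up merely makes explicit the left-sided limit and the adjoint computation, which the paper leaves as ``immediate.''
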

\begin{proof}
	This immediately follows from Lemma~\ref{lem:Stau} and 
	\begin{align*}
	\Delta(\tau+\tau_1)x - \Delta(\tau) x  &=
	T(\tau) \big(
	T(\tau_1) - I
	\big) x + \int_{\tau}^{\tau+\tau_1} T(s)BFx ds \\
	&= 
	T(\tau) (\Delta(\tau_1)x  -x)
	\end{align*}
	for every $x \in X$ and every $\tau,\tau_1\geq0$.
\end{proof}
The proof of Theorem~\ref{thm:continuity_initial_state} is based on the following lemma:
\begin{lemma}
	\label{lem:updating_time}
	Assume that $B \in \mathcal{B}(U,X_{-1})$ and that $F \in \mathcal{B}(X,U)$ is compact.
	Assume further that 
	the strongly continuous 
	semigroup $T(t)$ on $X$ satisfies \eqref{eq:T1_coersive} for some $c_1 > 0$ and $s_1 >0$.
	Let $x^0 \in X$ and $t_0 \geq 0$.
	Suppose that 
	\begin{equation}
	\label{eq:t1_def}
	t_1 := \inf \{ t > t_0: 
	\|F \Delta(t-t_0) x^0 - Fx^0\|_U > \varepsilon \| \Delta(t-t_0)x^0\|
	\}
	\end{equation}
	satisfies $t_1 < \infty$.
	For every $\kappa_1 \in (0,\theta_{\rm m})$ and $\delta_1 >0$,
	there exist $\kappa_0 \in (0,\theta_{\rm m})$ and $\delta_0 >0$ such that 
	for every $\eta_0 \geq 0 $ and $\zeta^0 \in X$ satisfying
	\begin{gather}
	\label{eq:ts0_xzeta0}
	|t_0 - \eta_0| < \kappa_0,\qquad
	\|x^0 - \zeta^0\| < \delta_0,
	\end{gather}
	we obtain 
	\begin{gather}
	|t_1 - \eta_1| < \kappa_1,\qquad 
	\|\Delta(t_1-t_0) x^0  - \Delta(\eta_1-\eta_0) \zeta^0\| < \delta_1, \label{eq:xt1_ys2}
	\end{gather}
	where
	\[
	\eta_1 := \inf \{ t > \eta_0: 
	\|F \Delta(t-\eta_0) \zeta^0 - F\zeta^0\|_U > \varepsilon \| \Delta(t-\eta_0)\zeta^0\|
	\}.
	\]
\end{lemma}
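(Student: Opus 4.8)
The plan is to reduce the whole statement to the continuity at $x^0$ of a single scalar ``first‑crossing time''. Put
$\phi(\tau,y):=\varepsilon\|\Delta(\tau)y\|-\|F\Delta(\tau)y-Fy\|_U$ for $\tau\ge0$ and $y\in X$, and $\Sigma(y):=\inf\{\tau>0:\phi(\tau,y)<0\}$, so that $t_1=t_0+\Sigma(x^0)$ and $\eta_1=\eta_0+\Sigma(\zeta^0)$. From Lemma~\ref{lem:Delta_continuity} together with the strong continuity of $\Delta(\cdot)$ and its uniform norm bound on compact time intervals, one checks that $\phi$ is jointly continuous on $[0,\infty)\times X$; and by the observation preceding Lemma~\ref{lem:Delta_continuity} we have $\phi(\tau,y)\ge0$ for every $y\in X$ and every $\tau\in[0,\theta_{\rm m})$. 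Since $t_1<\infty$, set $T_1:=t_1-t_0=\Sigma(x^0)$; then $T_1\in[\theta_{\rm m},\infty)$ and, by continuity, $\phi(\cdot,x^0)\ge0$ on $[0,T_1]$ with $\phi(T_1,x^0)=0$. Because $|t_1-\eta_1|\le|t_0-\eta_0|+|\Sigma(x^0)-\Sigma(\zeta^0)|$, the time estimate reduces to continuity of $\Sigma$ at $x^0$; and the displacement estimate then follows from the splitting $\Delta(\Sigma(x^0))x^0-\Delta(\Sigma(\zeta^0))\zeta^0=\bigl(\Delta(\Sigma(x^0))-\Delta(\Sigma(\zeta^0))\bigr)x^0+\Delta(\Sigma(\zeta^0))(x^0-\zeta^0)$, using Lemma~\ref{lem:Delta_continuity} (continuity of $\tau\mapsto\Delta(\tau)x^0$ and the uniform bound on $\|\Delta(\tau)\|_{\mathcal B(X)}$) once $|\Sigma(x^0)-\Sigma(\zeta^0)|$ and $\|x^0-\zeta^0\|$ are small.

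\emph{Upper bound on $\Sigma(\zeta^0)$.} Since $\phi(\cdot,x^0)\ge0$ on $[0,T_1]$ and $T_1<\infty$, the definition of the infimum gives some $\tau_*\in(T_1,T_1+\kappa_1/2)$ with $\phi(\tau_*,x^0)=:-\rho<0$. As $\phi(\tau_*,\cdot)$ is continuous, there is $\delta_0'>0$ such that $\|x^0-\zeta^0\|<\delta_0'$ forces $\phi(\tau_*,\zeta^0)<0$, hence $\Sigma(\zeta^0)\le\tau_*<T_1+\kappa_1/2$; in particular $\eta_1<\infty$.

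\emph{Lower bound on $\Sigma(\zeta^0)$.} If $T_1\le\theta_{\rm m}+\kappa_1/2$, then $[0,T_1-\kappa_1/2)\subset[0,\theta_{\rm m})$, so $\phi(\cdot,\zeta^0)\ge0$ there for every $\zeta^0$, and $\Sigma(\zeta^0)\ge T_1-\kappa_1/2$ automatically. Otherwise consider the compact set $I:=[\theta_{\rm m},T_1-\kappa_1/2]$, on which $\phi(\cdot,x^0)\ge0$. The key point is that $\rho':=\min_{\tau\in I}\phi(\tau,x^0)$ is \emph{strictly} positive; granting this, joint continuity of $\phi$ and compactness of $I$ produce $\delta_0''>0$ with $\phi(\cdot,\zeta^0)>0$ on $I$ whenever $\|x^0-\zeta^0\|<\delta_0''$, while $\phi(\cdot,\zeta^0)\ge0$ on $[0,\theta_{\rm m})$ always; hence $\phi(\cdot,\zeta^0)\ge0$ on $[0,T_1-\kappa_1/2)$ and $\Sigma(\zeta^0)\ge T_1-\kappa_1/2$. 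Combining the two bounds, taking $\kappa_0:=\kappa_1/2$ and $\delta_0$ small enough (at most $\min\{\delta_0',\delta_0''\}$, and small enough that the displacement estimate of the first paragraph drops below $\delta_1$), and using $|t_0-\eta_0|<\kappa_0$, one obtains $|t_1-\eta_1|<\kappa_1$ and $\|\Delta(t_1-t_0)x^0-\Delta(\eta_1-\eta_0)\zeta^0\|<\delta_1$.

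The step I expect to be the genuine obstacle is precisely the strict positivity $\rho'>0$ — that is, ruling out that the trigger function $\phi(\cdot,x^0)$ merely \emph{touches} $0$ at some instant strictly before $t_1-t_0$ before becoming strictly negative at $T_1$. In the typical situation this is immediate from continuity of $\phi(\cdot,x^0)$ on the compact set $I$, and it is here that the ``straightforward but lengthy'' analysis mentioned in the text is concentrated; everything else is a routine use of Lemma~\ref{lem:Delta_continuity} and of joint continuity, and of the bound $|t_0-\eta_0|<\kappa_0$ to pass from $\Sigma$ to $t_1$ and $\eta_1$.
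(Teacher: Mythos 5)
Your reformulation via the trigger function $\phi(\tau,y)=\varepsilon\|\Delta(\tau)y\|-\|F\Delta(\tau)y-Fy\|_U$ and the first-crossing time $\Sigma(y)$ is a faithful repackaging of the paper's argument, and the reduction $t_1=t_0+\Sigma(x^0)$, $\eta_1=\eta_0+\Sigma(\zeta^0)$ (exploiting time-invariance to decouple the perturbation of $\eta_0$ from that of $\zeta^0$) is in fact tidier than the paper's treatment, which carries the $\eta_0$-perturbation through operator-norm estimates such as $\|F\Delta(t_1+\kappa-\eta_0)-F\Delta(t_1+\kappa-t_0)\|_{\mathcal{B}(X,U)}$. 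Your upper bound on $\Sigma(\zeta^0)$ is exactly the paper's argument for $\eta_1<t_1+\kappa_1$: pick $\tau_*$ just beyond $T_1$ at which $\phi(\tau_*,x^0)<0$ with a definite margin, and transfer by continuity in the initial state. The displacement estimate at the end is likewise the paper's.

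The gap is in the lower bound, at precisely the point you flag. You need $\rho':=\min_{\tau\in I}\phi(\tau,x^0)>0$ with $I=[\theta_{\rm m},T_1-\kappa_1/2]$, and you assert this is ``immediate from continuity of $\phi(\cdot,x^0)$ on the compact set $I$.'' It is not: continuity of a nonnegative function on a compact set only gives that the minimum is attained and is $\ge 0$. The definition $T_1=\inf\{\tau>0:\phi(\tau,x^0)<0\}$ guarantees $\phi(\cdot,x^0)\ge 0$ on $(0,T_1)$ but does not exclude a point $\tau_0\in(\theta_{\rm m},T_1)$ at which $\phi(\cdot,x^0)$ touches zero without crossing; in that case $\rho'=0$, arbitrarily small perturbations $\zeta^0$ of $x^0$ can push $\phi(\cdot,\zeta^0)$ strictly negative near $\tau_0$, and then $\Sigma(\zeta^0)$ collapses to a neighborhood of $\tau_0$, destroying the estimate $|t_1-\eta_1|<\kappa_1$. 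So the step you yourself identify as the crux is left unproved, and it does not follow from the ingredients you invoke; it requires either an argument ruling out interior zeros of $\phi(\cdot,x^0)$ on $(\theta_{\rm m},T_1)$ or a different route. (For comparison, the paper proves only the direction $\eta_1<t_1+\kappa_1$ in detail and disposes of $t_1<\eta_1+\kappa_1$ with ``in the same way''; the naively symmetric transfer faces the same obstruction, since the violation margin $\varepsilon_0'$ just after $\eta_1$ depends on $\zeta^0$ while $\delta_0$ must be chosen first. Your write-up makes the obstruction visible, but the proposed resolution does not remove it.)
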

\begin{proof}	
	Let $x^0 \in X$ and $t_0\geq 0$ be given, and suppose that 
	$t_1$ defined by 
	\eqref{eq:t1_def} satisfies $t_1 < \infty$. Let us show the first inequality of \eqref{eq:xt1_ys2}.
	By the definition of $t_1$,
	for every $\kappa_1 \in (0,\theta_{\rm m})$,
	there exist $\kappa \in [0,\kappa_1)$ and $\varepsilon_0 > 0$ such that 
	\begin{equation}
	\label{eq:objective_eq}
	\|F \Delta(t_1+\kappa-t_0) x^0 - Fx^0\|_U =  \varepsilon \| \Delta(t_1+\kappa-t_0)x^0\| + \varepsilon_0.
	\end{equation}

	Let  
	$\eta_0 \geq 0$ and $\zeta^0 \in X$ satisfy
	\eqref{eq:ts0_xzeta0} for some $\kappa_0 \in (0,\theta_{\rm m})$ and $\delta_0 >0$. 
	Since 
	$t_1 \geq t_0 + \theta_{\rm m}$, it follows that
	$\eta_0 < t_0 +\kappa_0 < t_1$.
	We obtain
	\begin{align}
	&\|F \Delta(t_1+\kappa-t_0) x^0 - Fx^0\|_U \notag\\
	&\qquad \leq
	\| F \Delta(t_1+\kappa-\eta_0)\zeta^0 - F\zeta^0  \|_U +
	\|F (\Delta(t_1+\kappa-t_0) - I) (x^0 - \zeta^0)\|_U
	\notag\\
	&\quad \qquad + 
	\| F \Delta(t_1+\kappa-\eta_0)\zeta^0  - F\Delta(t_1+\kappa-t_0)\zeta^0  \|_U  
	\notag \\
	&\qquad <
	\| F \Delta(t_1+\kappa-\eta_0)\zeta^0 - F\zeta^0 \|_U + \phi_1(\kappa_0,\delta_0),
	\label{eq:used_eq1}
	\end{align}
	where \begin{align*}
	\phi_1(\kappa_0,\delta_0) &:=
	\delta_0\|F (\Delta(t_1+\kappa-t_0) - I)\|_{\mathcal{B}(X,U)} \\
	&\qquad+ 
	(\| x^0 \|+\delta_0) \| F \Delta(t_1+\kappa-\eta_0) - F\Delta(t_1+\kappa-t_0)  \|_{\mathcal{B}(X,U)}.
	\end{align*} 
	Lemmas~\ref{lem:uniform_conv} and \ref{lem:Delta_continuity} show that 
	\begin{align*}
	\lim_{\eta_0 \to t_0 }\| F \Delta(t_1+\kappa-\eta_0) - F\Delta(t_1+\kappa-t_0)  \|_{\mathcal{B}(X,U)} = 0.
	\end{align*}
	Therefore, $\phi_1(\kappa_0,\delta_0)$
	converges to zero
	as $(\kappa_0,\delta_0) \to (0,0)$.
	Similarly,
	\begin{align}
	\|\Delta(t_1+\kappa - \eta_0) \zeta^0\| 
	&<
	\|\Delta(t_1+\kappa - t_0) x^0\| + 
	\phi_2(\kappa_0,\delta_0),
	\label{eq:used_eq2} 
	\end{align}
	where
	\[
	\phi_2(\kappa_0,\delta_0) := 	\delta_0
	\|\Delta(t_1+\kappa - \eta_0)\|_{\mathcal{B}(X)} + 
	\|
	\Delta(t_1+\kappa - \eta_0)x^0-  \Delta(t_1+\kappa - t_0)  x^0
	\|.
	\]
	Using Lemma~\ref{lem:Delta_continuity} again, we find that 
	$\phi_2(\kappa_0,\delta_0)$
	converges to zero 
	as $(\kappa_0,\delta_0) \to (0,0)$.
	Combining \eqref{eq:objective_eq}--\eqref{eq:used_eq2}, we obtain
	\begin{align*}
	\| F \Delta(t_1+\kappa-\eta_0) \zeta^0 - F\zeta^0 \|_U &>
	\varepsilon \|\Delta(t_1+\kappa - \eta_0) \zeta^0\| \\
	&\qquad +
	\big(\varepsilon_0 - \phi_1(\kappa_0,\delta_0) - \varepsilon \phi_2(\kappa_0,\delta_0) \big).
	\end{align*}

	By the argument above,
	there exist $\kappa_0 \in (0,\theta_{\rm m})$ and $\delta_0 >0$ such that 
	for every	$\eta_0 \geq 0$ and $\zeta^0 \in X$ satisfying
	\eqref{eq:ts0_xzeta0},
	\[
	\| F \Delta(t_1+\kappa-\eta_0)\zeta^0 - F\zeta^0 \|_U >
	\varepsilon \|\Delta(t_1+\kappa - \eta_0) \zeta^0\|.
	\]
	Hence $\eta_1 < t_1 + \kappa_1$ by definition. In the same way, we obtain 
	$t_1 < \eta_1 + \kappa_1$.
	Thus the first inequality of \eqref{eq:xt1_ys2} holds.
	
	Next we prove the second inequality of \eqref{eq:xt1_ys2}.
	Let $\kappa_1 \in (0,\theta_{\rm m})$ and $\delta_1 >0$ be given. We have shown that
	there exist $\kappa_0 \in (0,\theta_{\rm m})$ and $\delta_0 >0$ such that 
	$|t_1 - \eta_1| < \kappa_1$
	for every $\eta_0 \geq 0 $ and $\zeta^0 \in X$ satisfying \eqref{eq:ts0_xzeta0}.
	Since $\eta_1 >t_1 - \kappa_1 > t_0$, it follows that
	\begin{align*}
	\|
	\Delta(t_1-t_0) x^0 - \Delta(\eta_1-\eta_0) \zeta^0
	\|
	&<
	\|
	\Delta(t_1 -t_0)x^0 - \Delta(\eta_1 - t_0)x^0
	\| \\
	&\qquad + 
	\|
	\Delta(\eta_1 -t_0)x^0 - \Delta(\eta_1 - \eta_0)x^0
	\|		\\
	&\qquad + 
	\delta_0\|\Delta(\eta_1 - \eta_0)\|_{\mathcal{B}(X)} .
	\end{align*}
	Lemma~\ref{lem:Delta_continuity}  shows that 
	\begin{align*}
	\lim_{\eta_1 \to t_1} 
	\|
	\Delta(t_1 -t_0)x^0 - \Delta(\eta_1 - t_0)x^0
	\| &= 0 \\
	\lim_{\eta_0 \to t_0}
	\|
	\Delta(\eta_1 -t_0)x^0 - \Delta(\eta_1 - \eta_0)x^0
	\|&=0.
	\end{align*}
	Moreover, 
	\[
	\|\Delta(\eta_1 - \eta_0)\|_{\mathcal{B}(X)} \leq
	\sup_{0\leq \tau \leq t_1+2\theta_{\rm m} - t_0} \|\Delta(\tau)\|_{\mathcal{B}(X)} < \infty.
	\]
	Thus we obtain the second inequality of \eqref{eq:xt1_ys2} for all sufficiently small
	$\kappa_0 \in (0,\theta_{\rm m})$ and $\delta_0 >0$.
\end{proof}

We are now in a position to show that 
solutions of the evolution equation \eqref{eq:plant}
continuously depend on 
initial states under the 
event-triggering mechanism \eqref{eq:time_seq_present}.
\begin{proof}[Proof of Theorem~\ref{thm:continuity_initial_state}]
	Let $x^0 \in X$ and $t_{\rm e} >0$ be given.
	Theorem~\ref{thm:no_zeno_present} shows that 
	there exist at most finitely many
	updating instants of the input $u$ induced by the
	event-triggering mechanism \eqref{eq:time_seq_present}
	on the interval $(0,t_{\rm e}]$.
	We first  assume that on the interval $(0,t_{\rm e}]$, there exist no 
	updating instants of the input $u$ derived from the initial state $x^0$.
	Then
	\[
	\inf
	\{ t > t_0: 
	\|F \Delta(t-t_0) x^0 - Fx^0\|_U > \varepsilon \| \Delta(t-t_0)x^0\|\} > t_{\rm e}.
	\]
	Note that $t_0 = \eta_0 = 0$.
	By  Lemma~\ref{lem:updating_time},
	there exists $\delta_0 >0$ such that for every $\zeta^0 \in X$ satisfying
	$\|x^0 - \zeta^0\| < \delta_0$, 
	the input $u$ derived from the initial state $\zeta^0$ has no 
	updating instants
	on the interval $(0,t_{\rm e}]$.
	Then 
	\[
	\|x(t) - \zeta(t)\| = \|\Delta(t)x^0 - \Delta(t)\zeta^0\| < \delta_0
	\sup_{0\leq t \leq t_{\rm e}}\|\Delta(t)\|_{\mathcal{B}(X)} \qquad \forall t \in [0,t_{\rm e}],
	\]
	and hence we obtain the desired conclusion by Lemma~\ref{lem:Delta_continuity}.
	
	Let $t_1,\dots,t_p \in (0,t_{\rm e}]$ with $t_1 < \dots < t_p$
	be the updating instants of the input $u$ derived from
	the initial state $x^0$.
	Without loss of generality, we may assume $t_p < t_{\rm e}$, since otherwise
	we shift $t_{\rm e}$ slightly so that  $t_p < t_{\rm e}$.
	Using Lemma~\ref{lem:updating_time} iteratively,
	we find that 
	for every $\kappa_1 \in (0,\theta_{\rm m})$ and $\delta_1 >0$,
	there exists $\delta_0 > 0$ such that 
	for every $\zeta^0 \in X$ satisfying $\|x^0 - \zeta^0\| < \delta_0$, the input $u$ 
	derived from the 
	initial state $\zeta^0$ 
	has $p$ updating instants $\eta_1,\dots,\eta_p \in (0,t_{\rm e})$ with $\eta_1<\dots<\eta_p$ and 
	\begin{equation}
	\label{eq:ell_bound}
	|t_{\ell} - \eta_{\ell}| < \kappa_1,\qquad \|x(t_{\ell}) - \zeta(\eta_{\ell})\| < \delta_1
	\qquad \forall \ell\in \{1,\dots,p\}.
	\end{equation}
	
	Choose $\ell\in\{1,\dots,p\}$ arbitrarily.
	Assume first that $\eta_{\ell} < t_{\ell}$.
	For every $\tau \in [0,t_{\ell} -\eta_{\ell})$, 
	\begin{align*}
	\|x(\eta_{\ell} + \tau) - \zeta(\eta_{\ell} + \tau) \| 
	< \| 
	x(t_{\ell}) - x(\eta_{\ell} + \tau)
	\| + 
	\|
	\zeta(\eta_{\ell}+\tau) - \zeta(\eta_{\ell})
	\|
	+\delta_1.
	\end{align*}
	We obtain
	\[
	\| 
	x(t_{\ell}) - x(\eta_{\ell} + \tau)
	\|=
	\|
	\Delta(t_\ell - t_{\ell-1})x(t_{\ell-1}) - \Delta(\eta_\ell + \tau - t_{\ell-1})x(t_{\ell-1})
	\|	
	\]
	and
	\begin{align*}
	\|
	\zeta(\eta_{\ell}+\tau) - \zeta(\eta_{\ell})
	\|	
	< \delta_1 \|\Delta(\tau) - I\|_{\mathcal{B}(X)} + 
	\|
	\Delta(\tau) x(t_\ell) - x(t_\ell)
	\|.
	\end{align*}
	Since $0 \leq \tau < t_{\ell} - \eta_{\ell}$, it follows that $|\tau| < \kappa_1$ and 
	\[
	|(t_{\ell} - t_{\ell-1})  - (\eta_\ell + \tau - t_{\ell-1})| = |t_{\ell}   -\eta_\ell - \tau| < \kappa_1. 
	\]
	Using Lemma~\ref{lem:Delta_continuity}, we find that 
	for every $\delta >0$, there exist $\kappa_1 \in (0,\theta_{\rm m})$ and $\delta_1 >0$ such that 
	the inequalities \eqref{eq:ell_bound} imply 
	\[
	\|x(\eta_{\ell} + \tau) - \zeta(\eta_{\ell} + \tau) \| < \delta \qquad \forall \tau \in [0,t_{\ell} - \eta_{\ell}).
	\]
	We obtain a similar result
	for the case $t_\ell \leq \eta_\ell$.

	Define 
	\[
	z_{\ell}^{\max} := \max\{t_\ell, \eta_\ell \},\quad
	z_{\ell + 1}^{\min} := \min\{t_{\ell+1}, \eta_{\ell + 1} \}\qquad  \forall \ell \in \{0,\dots,p\},
	\]
	where  $t_{p+1} := t_{\rm e}$ and $\eta_{p+1} := t_{\rm e}$.
	It suffices to show that for every $\delta >0$, there exist $\kappa_1 \in (0,\theta_{\rm m})$ and $\delta_1 >0$ such that 
	the inequalities \eqref{eq:ell_bound} imply 
	\[
	\|x(z_{\ell}^{\max} + \tau) - \zeta(z_{\ell}^{\max} + \tau) \| <\delta \qquad \forall \tau \in [0,z_{\ell + 1}^{\min} - z_{\ell}^{\max}],~
	\forall \ell \in \{0,\dots,p \}.
	\]
	This follows from
	\begin{align*}
	\|x(z_{\ell}^{\max} + \tau) - \zeta(z_{\ell}^{\max} + \tau) \| 
	& <
	\|\Delta(z_{\ell}^{\max} +\tau - t_{\ell})x(t_{\ell})
	- \Delta(z_{\ell}^{\max} +\tau - \eta_{\ell})x(t_{\ell})
	\| \\
	&\qquad \qquad 
	+ \delta_1
	\|\Delta(z_{\ell}^{\max} +\tau - \eta_{\ell}) \|_{\mathcal{B}(X)}.
	\end{align*}
	In fact,
	\begin{align*}
	\|\Delta(z_{\ell}^{\max} &+\tau - t_{\ell})x(t_{\ell})
	- \Delta(z_{\ell}^{\max} +\tau - \eta_{\ell})x(t_{\ell})\| \\
	&\qquad =
	\big\|T(\tau)\big(\Delta\big(|t_\ell - \eta_\ell|\big)x(t_{\ell}) -x(t_{\ell})\big)\big\| \\
	&\qquad \leq \sup_{0\leq \tau \leq t_{\rm e}} \|T(\tau)\|_{\mathcal{B}(X)}
	\cdot
	\big\|\Delta\big(|t_\ell - \eta_\ell|\big) x(t_{\ell}) - x(t_{\ell}) \big\|
	\end{align*}
	for every $\tau \in [0,z_{\ell + 1}^{\min} - z_{\ell}^{\max}]$ and every 
	$\ell \in \{0,\dots,p \}$.
	Moreover, Lemma~\ref{lem:Delta_continuity} shows that 
	\begin{align*}
	\lim_{\eta_{\ell}\to t_{\ell}}
	\big\|\Delta\big(|t_\ell - \eta_\ell|\big)x(t_{\ell}) - x(t_{\ell}) \big\|= 0\qquad\forall \ell \in \{0,\dots,p \}
	\end{align*}
	and that
	\[
	\|\Delta(z_{\ell}^{\max} +\tau - \eta_{\ell}) \|_{\mathcal{B}(X)} \leq 
	\sup_{0\leq t\leq t_{\rm e}}\|\Delta(t)\|_{\mathcal{B}(X)} < \infty.
	\]
	for every $\tau \in [0,z_{\ell + 1}^{\min} - z_{\ell}^{\max}]$ and every
	$\ell \in \{0,\dots,p \}$. This completes the proof.
\end{proof}

\section*{Acknowledgments}
The authors are grateful to the associate editor and anonymous
reviewers whose comments greatly improved the paper.

\end{document}